\theoremstyle{plain}
\newtheorem {lemma}{Lemma}[section] 
\newtheorem {theorem}[lemma]{Theorem}
\newtheorem {corollary}[lemma]{Corollary}
\newtheorem{conjecture}{Conjecture}
\newtheorem {cor}[lemma]{Corollary}
\newtheorem {proposition}[lemma]{Proposition}
\newtheorem {prop}[lemma]{Proposition}
\theoremstyle{definition}
\newtheorem {remark}[lemma]{Remark}
\newtheorem {remarks}[lemma]{Remarks}
\newtheorem {example}[lemma]{Example}
\theoremstyle{definition}
\newtheorem{deff}[lemma]{Definition}{}
\newcommand{\M}{\operatorname{\mathbb M}}
\newcommand{\LL}{\operatorname{\mathcal L}}
\newcommand{\IM}{\operatorname{\mathbb I}}
\newcommand{\gr}{\operatorname{gr}}
\newcommand{\V}{\operatorname{\mathcal V}}
\newcommand{\LCM}{\operatorname{LCM}}
\newcommand{\va}{\varphi}
\newcommand{\Ga}{\Gamma}
\newcommand{\ga}{\gamma}
\newcommand{\al}{\alpha}
\newcommand{\de}{\delta}
\newcommand{\la}{\lambda}
\newcommand{\out}{\operatorname{out}}
\newcommand{\ol}{\overline}
\newcommand{\End}{\operatorname{End}}
\newcommand{\Hom}{\operatorname{Hom}}
\newcommand{\st}{\operatorname{st}}
\newcommand{\sink}{\operatorname{Sink}}
\newcommand{\coker}{\operatorname{coker}}
\newcommand{\supp}{\operatorname{supp}}
\newcommand{\outdeg}{\operatorname{outdeg}}
\newcommand{\totdeg}{\operatorname{totdeg}}
\newcommand{\card}{\operatorname{card}}
\newcommand{\Pgr}{\mathcal P \mathrm{gr}}
\newcommand{\OO}{\mathcal{O}}
\long\def\forget#1\forgotten{}
\title[The graded Grothendieck group and Leavitt path algebras]{The graded Grothendieck group\\ and the classification of  Leavitt path algebras}
\author{Roozbeh Hazrat}\address{
Department of Pure Mathematics\\
Queen's University\\
Belfast BT7 1NN\\
United Kingdom} \email{r.hazrat@qub.ac.uk}
\subjclass[2000]{16D70} \keywords{Leavitt path algebras, weighted
Leavitt path algebras, graded Grothendieck group, $K$-theory}
\begin{document}

\begin{abstract}
This paper is an attempt to show that, parallel to Elliott's classification of AF $C^*$-algebras by means of $K$-theory, the graded $K_0$-group classifies Leavitt path algebras completely. In this direction, we prove this claim at two extremes, namely, for the class of acyclic graphs (graphs with no cycles) and multi-headed comets or rose graphs  (graphs  in which each head is connected to a cycle or to a collection of loops), or a mixture of these graphs. 
\end{abstract}

\maketitle


\section{Introduction} \label{introf}

In \cite{vitt62} Leavitt considered the free associative $K$-algebra $A$ generated by symbols $\{x_i,y_i \mid 1\leq i \leq n\}$ subject to relations 
\begin{equation}\label{jh54320}
x_iy_j =\delta_{ij}, \text{ for all } 1\leq i,j \leq n,  \text{  and  } \sum_{i=1}^n y_ix_i=1,
\end{equation} where $K$ is a field, $n\geq 2$ and $\delta_{ij}$ is the Kronecker delta. 
The relations guarantee the right $A$-module homomorphism 
\begin{align}\label{is329ho}
\phi:A&\longrightarrow A^n\\
a &\mapsto (x_1a	,x_2a,\dots,x_na)\notag
\end{align}
has an inverse 
\begin{align}\label{is329ho9}
\psi:A^n&\longrightarrow A\\
(a_1,\dots,a_n) &\mapsto  y_1a_1+\dots+y_na_n, \notag 
\end{align}
so $A\cong A^n$ as a right $A$-module. He showed that $A$ is universal with respect to this property, of type $(1,n-1)$ (see \S\ref{gtr5654}) and it is a simple ring. 
Modeled on this, Leavitt path algebras are introduced \cite{aap05,amp}, which attach to a directed graph a certain algebra. In the case that the graph has one vertex and $n$ loops, it recovers Leavitt's algebra (\ref{jh54320}) (where $y_i$ are the loops and $x_i$ are the ghost loops).  

In \cite{cuntz1} Cuntz considered the universal unital $C^*$-algebra $\OO_n$ generated by isometries $\{s_i \mid 1\leq i\leq n\}$ subject to the relation \[ \sum_{i=1}^ns_is_i^*=1,\] where $n\geq 2$. He showed that $\mathcal O_n$ is purely infinite, simple $C^*$-algebras. Modeled on this, graph $C^*$-algebras are introduced \cite{paskrae,Raegraph},  which attach to a directed graph a certain $C^*$-algebra. In the case that the graph has one vertex and $n$ loops, it recovers  Cuntz's algebra.  

The study of Leavitt path algebras and their analytic counterpart develop parallel and strikingly similar.  
Cuntz computed $K_0$ of $\OO_n$ \cite{cuntz2} and in~\cite{Raeburn113} Raeburn and  Szyma\'nski carried out the calculation for graph $C^*$-algebras (see also~\cite[Ch.~7]{Raegraph}). Ara,  Brustenga and Cortin\~as~\cite{arawillie} calculated $K$-theory of Leavitt path algebras. It was shown that when $K=\mathbb C$, the $K_0$-group  of graph $C^*$-algebras and Leavitt path algebras coincide~\cite[Theorem~7.1]{amp}. 

The Grothendieck group $K_0$ has long been recognized as an essential tool in classifying certain types of $C^*$-algebras. Elliott proved that $K_0$ as a ``pointed'' pre-ordered group classifies the AF $C^*$-algebras completely. 
Since the ``underlying'' algebras of Leavitt path algebras are ultramatricial algebras (as in AF $C^*$-algebras) Elliott's work (see also~\cite[\S 8]{rordam}) has prompted to consider the $K_0$-group as the main tool of classifying certain types of Leavitt path algebras~\cite{aalp}. 

This note is an attempt to justify that parallel to Elliott's work on AF $C^*$-algebras, the ``pointed'' pre-order {\it graded} Grothedieck group, $K^{\gr}_0$,  classifies the Leavitt path algebras completely.  This note proves this claim at two extremes, namely for acyclic graphs (graphs with no cycle) and for graphs in which each head is connected to a cycle or to a collection of loops (see~(\ref{pid98})) or a mixture of these graphs (see~(\ref{monster})). 

Leavitt's algebra constructed in~(\ref{jh54320}) has a natural grading; assigning $1$ to $y_i$ and $-1$ to $x_i$, $1\leq i \leq n$, since the relations are homogeneous (of degree zero), the algebra $A$ is a $\mathbb Z$-graded algebra. The isomorphism~(\ref{is329ho}) induces a graded isomorphism   
\begin{align}\label{is329ho22}
\phi:A&\longrightarrow A(-1)^n\\
a &\mapsto (x_1a	,x_2a,\dots,x_na), \notag
\end{align}
where $A(-1)$ is the suspension of $A$ by $-1$. In the non-graded setting, the relation $A\cong A^n$ translates to $(n-1)[A]=0$ in $K_0(A)$. In fact for $n=2$, one can show that $K_0(A)=0$. However, when considering the grading, for $n=2$, $A \cong_{\gr} A(-1)\oplus A(-1)$ and $A(i)\cong_{\gr} A(i-1)\oplus A(i-1)$, $i\in \mathbb Z$, and therefore $A \cong_{\gr}  A(-i)^{2^i}$, $i \in \mathbb N$. This implies not only  $K^{\gr}_0(A)\not = 0$ but one can also show that $K^{\gr}_0(A)\cong \mathbb Z[1/2]$. This is an indication that graded $K$-groups can capture more information than the non-graded $K$-groups (see Example~\ref{smallgraphs} for more examples). In fact, we will observe that there is a close relation between graded $K$-groups of Leavitt path algebras and their graph $C^*$-algebra counterparts. For example, if the graph is finite with no sink, we have isomorphisms in the  middle of the diagram  below which induce isomorphisms on the right and left hand side of the diagram. This immediately implies 
$  K_0(C^*(E)) \cong K_0(\LL_{\mathbb C}(E))$. (See Remark~\ref{hjsonf} for the general case of row finite graphs.)
\[
\xymatrix{ 0 \ar[r] & K_1(C^*(E)) \ar[r] \ar@{.>}[d] &
K_0\big(C^*(E\times_1 \mathbb Z)\big) \ar[d]^{\cong}
\ar[r]^{1-\beta^{-1}_*} & K_0\big(C^*(E\times_1 \mathbb Z)\big) \ar[d]^{\cong} \ar[r]& K_0(C^*(E))\ar@{.>}[d]\ar[r]& 0\\
0 \ar[r] & \ker(1-N^t) \ar[r] & K^{\gr}_0(\LL_{\mathbb C}(E)) \ar[r]^{1-N^{t}} &
K^{\gr}_0(\LL_{\mathbb C}(E)) \ar[r]& K_0(\LL_{\mathbb C}(E)) \ar[r] &0}.
\]

In the graded $K$-theory we study in this note, suspensions play a pivotal role. For example, Abrams \cite[Proposition~1.3]{arock} has given a number theoretic criterion when matrices over Leavitt algebras with no suspensions are graded isomorphism. Abrams' criterion shows 
\[\M_3(\LL_2)\not \cong_{\gr} \M_4(\LL_2),\] where $\LL_2$ is the Leavitt algebra constructed in (\ref{jh54320}) for $n=2$. However using graded $K$-theory (Theorem~\ref{re282} and Theorem~\ref{mani543}), we shall see that 
the Leavitt path algebras of the following two graphs 
\begin{equation*}
\xymatrix@=10pt{
      &\bullet \ar[dr]   & &&&&  & \bullet \ar[dr] \\
 E: &&  \bullet \ar@(ur,rd)  \ar@(u,r) & & && F: & & \bullet  \ar[r] &  \bullet \ar@(ur,rd)  \ar@(u,r) &\\
      &\bullet \ar[ur]  & &&&&  & \bullet \ar[ur]
}
\end{equation*}
are graded isomorphic, i.e., $\LL(E) \cong_{\gr} \LL(F)$, which would then imply 
\begin{equation}
\M_3(\LL_2)(0,1,1) \cong_{\gr} \M_4(\LL_2)(0,1,2,2).
\end{equation}
This also shows, by considering suspensions, we get much wider classes of graded isomorphisms of matrices over Leavitt algebras (see Theorem~\ref{cfd2497}). In fact, the suspensions of graded modules induce a $\mathbb Z[x,x^{-1}]$-module structure on the graded $K_0$-group. This extra structure helps us to characterize the Leavitt path algebras. 

Elliott showed that $K_0$-groups classify ultramatricial algebras completely (\cite[\S15]{goodearlbook}). Namely, for two ultramatricial algebras $R$ and $S$, if $\phi:K_0(R)\rightarrow K_0(S)$ is an isomorphism such that  $\phi([R])=[S]$ and the set of isomorphism classes of finitely generated  projective modules of $R$ are sent to and covers the set of isomorphism classes of finitely generated   projective modules of $S$, i.e., $\phi$ and $\phi^{-1}$ are order preserving, then $R$ and $S$ are isomorphic.  This isomorphism is written as $\big(K_0(R),[R]\big)\cong \big (K_0(S),[S]\big)$. 

Replacing $K_0$ by $K^{\gr}_0$, we will prove that a similar statement holds for the class of Leavitt path algebras arising from acyclic, multi-headed comets and multi-headed rose graphs (see Figures~\ref{pid98}) or a mixture of these (called a polycephaly graph, see Figure~\ref{monster}) in Theorem~\ref{mani543}. Here the isomorphisms between $K^{\gr}_0$-groups are considered as ordered preserving $\mathbb Z[x,x^{-1}]$-modules (see Example~\ref{upst} for the action of $\mathbb Z[x,x^{-1}]$ on $K^{\gr}_0$-groups of acyclic and comets graphs). 

\begin{equation}\label{pid98}
\xymatrix@=16pt{
 & \bullet \ar[d] \ar[r] & \bullet  &      &&   \bullet \ar[d] \ar[r] & \bullet \ar@(ur,dr)\\
\bullet \ar[dr] & \bullet\ar[d] & &   &&\bullet\ar@/^/[dr] &&&     &&    & & &&   \bullet  \ar@(ul,ur)  \ar@(u,r) \ar@{.}@(ur,dr) \ar@(r,d)& \\
& \bullet &&    \bullet \ar[r] \ar[dr]  & \bullet \ar@/^/[ur]     &&\bullet \ar@/^1pc/[ll] & \bullet \ar[l] &    &&
    \bullet \ar[r]  &   \bullet \ar[r]  &  \bullet \ar[r] \ar[urr]   &  \bullet \ar[r] \ar[dr] & \bullet \ar[r]  &    \bullet \ar[r]  & \bullet  \ar@(ul,ur)  \ar@(u,r) \ar@{.}@(ur,dr) \ar@(r,d)& \\
 &&    & & \bullet \ar@/^1pc/[rr] &  & \bullet \ar@/^1.3pc/[ll]&&& &&&  \bullet \ar[r]  & \bullet \ar[r]  &   \bullet   \ar@(u,r) \ar@{.}@(ur,dr) \ar@(r,d)&  
 }
 \end{equation}

\bigskip

This paves the way to pose the following conjecture. 

\begin{conjecture}[Weak classification conjecture]\label{weakconj}
Let $E$ and $F$ be row-finite graphs. Then  $\LL(E)\cong_{\gr} \LL(F)$ if and only if there is an ordered preserving $\mathbb Z[x,x^{-1}]$-module  isomorphism
\[\big (K_0^{\gr}(\LL(E)),[\LL(E)]\big ) \cong \big (K_0^{\gr}(\LL(F)),[\LL(F)]\big ).\]
\end{conjecture}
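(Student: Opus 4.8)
The plan is to split the biconditional and to put essentially all the effort into the converse. For the ``only if'' direction, a graded isomorphism $\va\colon \LL(E) \xrightarrow{\sim} \LL(F)$ induces an isomorphism $K_0^{\gr}(\LL(E)) \to K_0^{\gr}(\LL(F))$ that is automatically order preserving, since the equivalence of graded module categories induced by $\va$ carries classes of graded finitely generated projectives to classes of graded finitely generated projectives; it is $\mathbb Z[x,x^{-1}]$-linear because the module action is induced by the suspension functor $M \mapsto M(1)$, which commutes with that equivalence; and it sends $[\LL(E)]$ to $[\LL(F)]$. So this implication is immediate from the functoriality of $K_0^{\gr}$.

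The whole difficulty is the converse, and here I would run Elliott's paradigm one categorical level up. The key structural input, available whenever $E$ has no sinks so that $\LL(E)$ is strongly $\mathbb Z$-graded, is Dade's theorem: the functor $M \mapsto M_0$ from graded $\LL(E)$-modules to $\LL(E)_0$-modules is an equivalence, and the degree-zero subalgebra $\LL(E)_0$ is ultramatricial, realized concretely as $\varinjlim(\mathbb Z^{|E^0|}, N^t)$ on $K_0$ with connecting maps given by the (transpose of the) adjacency matrix. Under this equivalence $K_0^{\gr}(\LL(E)) \cong K_0(\LL(E)_0)$ as pre-ordered groups with order unit, and the generator $x$ of $\mathbb Z[x,x^{-1}]$ acts as the order automorphism $[P] \mapsto [P \otimes_{\LL(E)_0} \LL(E)_1]$ given by tensoring with the invertible degree-one bimodule $\LL(E)_1$ — consistent with the map $1-N^t$ appearing in the exact sequence of the introduction. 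Thus the hypothesis yields an order isomorphism $K_0(\LL(E)_0) \xrightarrow{\sim} K_0(\LL(F)_0)$ that intertwines the two bimodule actions and matches the order units.

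The central step is then an \emph{equivariant} refinement of Elliott's theorem: from such a compatible order isomorphism, produce a ring isomorphism $\theta\colon \LL(E)_0 \xrightarrow{\sim} \LL(F)_0$ under which the invertible bimodules $\LL(E)_1$ and $\LL(F)_1$ correspond. Classically Elliott constructs $\theta$ by an intertwining (back-and-forth) argument between the defining direct systems of matrix algebras; to make it equivariant I would pass to cofinal subsystems adapted to the two bimodule actions and interleave the construction of $\theta$ with the tracking of the bimodules, correcting by inner automorphisms at each finite stage. Since a strongly $\mathbb Z$-graded ring is determined up to graded isomorphism by its degree-zero part together with this invertible degree-one bimodule, a $\theta$ built compatibly lifts to the sought graded isomorphism $\LL(E) \cong_{\gr} \LL(F)$. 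When $E$ has sinks the grading is no longer strong, so I would instead work with the smash product $\LL(E)\#\mathbb Z$ — an ultramatricial algebra carrying a $\mathbb Z$-action whose equivariant $K_0$ reproduces $K_0^{\gr}(\LL(E))$ — and run the analogous argument there.

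The main obstacle is exactly the equivariant Elliott step. The bimodule actions need not preserve any finite stage of the chosen direct systems, so the intertwining must simultaneously approximate the algebra isomorphism and its compatibility with the actions, and reconciling these for graphs whose cycles interact with one another in complicated ways appears genuinely delicate. This is precisely the point at which the present paper succeeds only at the two extremes of Theorem~\ref{mani543} (acyclic graphs, and multi-headed comet and rose graphs, or polycephaly mixtures), which is why the general assertion is posed as a conjecture rather than proved as a theorem.
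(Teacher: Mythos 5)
The statement you are addressing is posed in the paper as a conjecture, not a theorem, and your proposal does not close it either: the only part you actually prove is the ``only if'' direction, which is indeed immediate from functoriality of $K^{\gr}_0$ and the compatibility of suspension with a graded isomorphism. Everything else is a plan whose central step --- the ``equivariant refinement of Elliott's theorem'' --- is exactly the open problem, and naming it is not the same as supplying it. Concretely, the gap is this: after Dade's theorem you must produce, from an order-unit-preserving isomorphism $K_0(\LL(E)_0)\cong K_0(\LL(F)_0)$ that intertwines the automorphisms induced by the invertible bimodules $\LL(E)_1$ and $\LL(F)_1$ (equivalently, by $N_E^t$ and $N_F^t$ on the direct limits), a ring isomorphism $\LL(E)_0\cong\LL(F)_0$ under which these bimodules correspond. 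Elliott's intertwining constructs the isomorphism stage by stage through the ultramatricial filtrations $\LL(E)_0=\bigcup_n L_{0,n}$, but the bimodule action does not preserve any finite stage (multiplication by $\LL(E)_1$ shifts $L_{0,n}$ into $L_{0,n+1}$), so ``correcting by inner automorphisms at each finite stage'' is not yet a construction: you would have to show that the corrections can be chosen coherently so that the algebra maps and the bimodule identifications converge simultaneously, and no argument for this is given. The same holds for the sink case, where the smash-product device is only named. This missing step is not a technicality; it is the entire content of the conjecture.

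It is also worth recording that for the cases the paper does prove (Theorem~\ref{mani543}, polycephaly graphs), the route is different from yours: no equivariant intertwining is attempted. Instead the structure theorem (Theorem~\ref{polyheadb}) writes $\LL(E)$ explicitly as a finite direct sum of matrix algebras over $K$, over $K[x^l,x^{-l}]$, and over $\LL(1,n)$; the graded $K_0$ of each block is computed (Theorems~\ref{f4j5h6h8}, \ref{cothemp} and \ref{re282}); one then shows that any order-preserving $\mathbb Z[x,x^{-1}]$-module isomorphism must respect this block decomposition, because $x$ acts with finite order on comet blocks, as multiplication by $n_s$ on rose blocks, and freely on acyclic blocks; finally the graded matricial classification Theorem~\ref{catgrhsf} is applied blockwise for the acyclic and comet parts, with a Dade-plus-unit-regularity argument handling the rose parts. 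Your strategy, if the equivariant Elliott step could genuinely be carried out, would be stronger --- it would settle the conjecture for all finite graphs without sinks, not just polycephaly ones --- but as written it establishes nothing beyond the trivial direction.
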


Whereas, in Elliott's case, $K_0$ is a functor to the category of pre-ordered abelian groups, 
$K^{\gr}_0$ is a functor to $\Gamma$-pre-ordered abelian groups taking into account the $\Gamma$-grading of the rings (see~\S\ref{pregg5}).  


In fact the proof of Theorem~\ref{mani543} shows that an isomorphism between the graded Grothendieck groups induces a $K$-algebra isomorphism between the Leavitt path algebras. Therefore starting from a ring isomorphism, passing through $K$-theory, we obtain $K$-algebra isomorphisms between Leavitt path algebras. Therefore we obtain the following conjecture for the class of polycephaly graphs (see Corollary~\ref{griso7dgre}). 

\begin{conjecture}\label{cofian}
Let $E$ and $F$ be row-finite graphs. Then $\LL_K(E) \cong_{\gr} \LL_K(F)$ as  rings if and only if $\LL_K(E) \cong_{\gr} \LL_K(F)$ as $K$-algebras. 
\end{conjecture}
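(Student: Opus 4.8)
The forward direction is immediate, since a graded $K$-algebra isomorphism is in particular a graded ring isomorphism; the whole content lies in the converse. My plan is to reduce it to two ingredients: a \emph{twisting lemma} valid for every graph, and a \emph{field-recovery} step which is the genuine obstacle. For the twisting lemma, fix $\tau\in\Aut(K)$ and let $\LL_K(E)^\tau$ denote the same graded ring with rescaled scalar action $\lambda\cdot x=\tau(\lambda)x$. I would prove $\LL_K(E)\cong_{\gr}\LL_K(E)^\tau$ as graded $K$-algebras for every row-finite $E$. The point is that all defining relations of $\LL_K(E)$ — orthogonality $vw=\delta_{vw}v$, the source/range relations, (CK1) $e^*f=\delta_{ef}r(e)$, and (CK2) $v=\sum_{s(e)=v}ee^*$ — have coefficients in the prime field $\mathbb F\subseteq K$. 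Hence $\LL_K(E)=K\otimes_{\mathbb F}\LL_{\mathbb F}(E)$ as graded $K$-algebras, and since $\tau$ fixes $\mathbb F$ pointwise, $\tau\otimes\id$ is a graded ring automorphism of $\LL_K(E)$ that is $\tau$-semilinear; reinterpreted on the target, this is precisely a graded $K$-algebra isomorphism $\LL_K(E)\cong_{\gr}\LL_K(E)^\tau$.

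Granting the lemma, the strategy runs as follows. From a graded ring isomorphism $\phi\colon\LL_K(E)\to\LL_K(F)$ I want to extract a single $\tau\in\Aut(K)$ making $\phi$ \emph{$\tau$-semilinear}, i.e. $\phi(\lambda x)=\tau(\lambda)\phi(x)$; then the composite $(\tau^{-1}\otimes\id)\circ\phi$ is $\tau^{-1}\tau=\id$-semilinear, hence $K$-linear, giving a genuine graded $K$-algebra isomorphism. To produce $\tau$ I would descend to degree zero. For any row-finite graph $\LL_K(E)_0$ is ultramatricial, $\LL_K(E)_0=\varinjlim\prod_i\M_{n_i}(K)$, and the graded map $\phi$ restricts to a ring isomorphism $\phi_0\colon\LL_K(E)_0\to\LL_K(F)_0$. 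A rank-one idempotent $p$ inside a matrix block yields a corner $p\LL_K(E)_0p\cong K$; then $\phi_0$ sends $p$ to a rank-one idempotent $q$ and restricts to a field isomorphism between the corners $p\LL_K(E)_0p\cong K$ and $q\LL_K(F)_0q\cong K$, i.e. an element $\tau_p\in\Aut(K)$. Conjugacy of rank-one idempotents within a block makes $\tau_p$ independent of the choice inside that block, and transporting idempotents between blocks along the degree $\pm1$ generators $e,e^*$ (which $\phi$, being a ring map, respects) propagates the same automorphism along every path of $E$, so on each connected component $\tau_p$ stabilises to a constant $\tau$.

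The main obstacle is upgrading this local field automorphism to global semilinearity of $\phi$, and this is exactly where passing from the known classes to arbitrary row-finite graphs becomes delicate. When $\LL_K(F)$ is unital with center $K\cdot 1$ the argument closes at once: $\lambda\cdot 1$ is central with corner value $\lambda$, so $\phi(\lambda\cdot 1)$ is central with corner value $\tau(\lambda)$, forcing $\phi(\lambda\cdot 1)=\tau(\lambda)\cdot 1$ and hence $\phi(\lambda x)=\phi(\lambda\cdot 1)\phi(x)=\tau(\lambda)\phi(x)$. In general three difficulties arise together: the vertex set may be infinite, so there is no identity and the corner computation must be run against a net of local units with the scalar action controlled in the limit; the center may strictly contain $K\cdot 1$, so a priori the recovered $\tau$ could vary across central blocks and one must check that the twisting lemma applies block-by-block and reassembles into one $K$-algebra isomorphism; and one must verify that $\phi$ genuinely respects the central idempotents indexing connected components, matching $E$ with $F$ component-wise before untwisting. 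Controlling these three points uniformly — equivalently, showing that the sole obstruction to $K$-linearity of a graded ring isomorphism of Leavitt path algebras is a field automorphism which the prime-field relations absorb — is the crux that keeps the statement conjectural; for the polycephaly graphs of Figures~\ref{pid98} and~\ref{monster}, where $\LL_K(E)_0$ and the center are explicitly understood, all three points become tractable and the conjecture is confirmed.
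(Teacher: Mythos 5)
Your twisting lemma is correct ($\LL_K(E)\cong K\otimes_{\mathbb F}\LL_{\mathbb F}(E)$ as graded $K$-algebras, since all defining relations live over the prime field, so untwisting by $\tau^{-1}\otimes\id$ is legitimate and preserves the grading), and your closing argument in the unital, center-$K\cdot 1$ case is sound. But the mechanism you propose for extracting $\tau$ in general is broken for every graph containing a cycle. In $\LL_K(E)_0=\varinjlim_n L_{0,n}$ the transition maps identify a monomial $pq^*$ with $r(p)=r(q)=v$ not a sink with $\sum_{\{\alpha\mid s(\alpha)=v\}}p\alpha(q\alpha)^*$ (see the proof of Theorem~\ref{ultrad}), so a rank-one idempotent of a block of $L_{0,n}$ acquires strictly larger rank at each later stage; its corner in the limit is again an infinite-dimensional ultramatricial algebra, not $K$. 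Concretely, for the rose with two petals one has $\LL_K(E)_0\cong\varinjlim\M_{2^n}(K)$, which has no minimal idempotents at all, and $p\,\LL_K(E)_0\,p$ is isomorphic to the whole algebra, so ``$\tau_p\in\Aut(K)$'' is simply undefined outside the acyclic (finite-dimensional) setting. Your shortcut also fails on the comet summands, where the center of $\M_m\big(K[x^l,x^{-l}]\big)$ is $K[x^l,x^{-l}]\neq K\cdot 1$. The salvageable invariant is the degree-zero part of the center of each graded-simple summand (which is $K$ in all three summand types of Theorem~\ref{polyheadb}, and which a \emph{graded} ring isomorphism does preserve), but that is not the argument you wrote; and since you only assert, without carrying it out, that the remaining difficulties are tractable for polycephaly graphs, the proposal does not actually establish the statement in the cases where the paper does.

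The paper's own proof (Corollary~\ref{griso7dgre}, for polycephaly graphs) runs along a genuinely different and twist-insensitive route: a graded \emph{ring} isomorphism already induces a suspension-compatible equivalence of the categories of graded finitely generated projectives, hence an order-preserving $\mathbb Z[x,x^{-1}]$-module isomorphism $\big(K_0^{\gr}(\LL_K(E)),[\LL_K(E)]\big)\cong\big(K_0^{\gr}(\LL_K(F)),[\LL_K(F)]\big)$ --- an invariant built from the graded ring structure alone, which therefore ``launders'' any unknown scalar semilinearity. The proof of Theorem~\ref{mani543}, through the fullness part of Theorem~\ref{catgrhsf}, then \emph{constructs} an honest graded $K$-algebra homomorphism realizing this $K$-group map, yielding $\LL_K(E)\cong_{\gr}\LL_K(F)$ as $K$-algebras. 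So where you try to recover and cancel a field automorphism directly, the paper sidesteps the semilinearity question entirely by passing through $K_0^{\gr}$ and back; if you wish to rescue your programme, replace the corner-idempotent extraction by the degree-zero centers of the graded-simple summands, or invoke the $K$-theoretic classification as the paper does.
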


In fact we prove $K^{\gr}_0$ functor is a fully faithful functor from the category of acyclic Leavitt path algebras to the category of pre-ordered abelian groups (see \S\ref{pregg5} and Theorem~\ref{catgrhsf}). From the results of the paper, one is tempted to make the following conjecture.

\begin{conjecture}[Strong classification conjecture]\label{strongconj}
The graded Grothendieck group $K^{\gr}_0$ is a fully faithful functor from the category of  Leavitt path algebras with graded homomorphisms modulo inner-automorphisms to the category of pre-ordered abelian groups with order-units. 
\end{conjecture}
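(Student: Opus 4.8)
The plan is to prove full faithfulness by a graded, purely algebraic analogue of Elliott's approximate-intertwining argument, bootstrapping from the cases already settled. Since $K^{\gr}_0$ is already known to be fully faithful on the acyclic subcategory (Theorem~\ref{catgrhsf}) and the object-level statements for polycephaly graphs are handled in Theorem~\ref{mani543}, the remaining content is (i) \emph{faithfulness}: two graded $K$-algebra homomorphisms $f,g\colon \LL(E)\to\LL(F)$ with $K^{\gr}_0(f)=K^{\gr}_0(g)$ differ by an inner automorphism; and (ii) \emph{fullness}: every order-preserving $\mathbb Z[x,x^{-1}]$-module homomorphism $\psi\colon K^{\gr}_0(\LL(E))\to K^{\gr}_0(\LL(F))$ respecting the order-units $[\LL(E)],[\LL(F)]$ is realized as $K^{\gr}_0(f)$ for some graded $f$. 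The essential structural input for both is to exhibit $\LL(E)$, for $E$ row-finite, as a graded direct limit $\varinjlim A_i$ of graded ``matricial'' building blocks --- graded matrix algebras over $K$, over the Laurent polynomial algebra $K[x,x^{-1}]$ (coming from cycles), or over the Leavitt algebras $\LL_m$ (coming from roses) --- together with a \emph{local classification}: for such blocks, graded homomorphisms up to graded inner automorphism are in bijection with the order-preserving $\mathbb Z[x,x^{-1}]$-module maps on $K^{\gr}_0$. This local statement is the engine, and the global statement follows by intertwining the direct systems.

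For faithfulness I would argue blockwise. A graded homomorphism $f$ is determined by the images of the vertex idempotents $v\in E^0$ and of the (ghost) edges, and $K^{\gr}_0(f)$ records precisely the classes of the graded projective modules $f(v)\LL(F)$. If $K^{\gr}_0(f)=K^{\gr}_0(g)$ then $f(v)\LL(F)\cong_{\gr} g(v)\LL(F)$ for every $v$, so on each finite building block $A_i$ the restrictions $f|_{A_i}$ and $g|_{A_i}$ have graded-isomorphic images and hence are conjugate by a homogeneous unit $u_i$ of the relevant corner. The only freedom in the local classification is exactly this choice of conjugating unit; the point of passing to homomorphisms \emph{modulo inner automorphisms} is that the $u_i$ can be chosen coherently along the direct system (adjusting $u_{i+1}$ to agree with $u_i$ on $A_i$ up to an inner automorphism of $A_{i+1}$) and assembled into a single unit $u$ with $f=\operatorname{Int}(u)\circ g$. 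Thus $f$ and $g$ represent the same morphism in the quotient category. For fullness, given $\psi$ I would build $f$ by induction over the system realizing $\LL(E)=\varinjlim A_i$: at each stage $\psi$ prescribes which graded projective $\LL(F)$-modules the generators of a block must map to, where \emph{order-preservation} guarantees that the prescribed classes are genuinely classes of finitely generated graded projectives (non-negative ``multiplicities'') and $\mathbb Z[x,x^{-1}]$\emph{-linearity} guarantees that the suspension is respected, so that a \emph{graded} homomorphism $A_i\to\LL(F)$ with the correct $K^{\gr}_0$-class exists by the local classification. Compatibility with the transition maps $A_i\to A_{i+1}$ is arranged up to inner automorphism at each step, and the limit produces a graded $f$ with $K^{\gr}_0(f)=\psi$, using that $K^{\gr}_0$ commutes with graded direct limits.

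The main obstacle is the handling of cycles. For acyclic $E$ the blocks $A_i$ are graded matrix algebras over the field $K$, hence graded ultramatricial, and the local classification is a direct graded refinement of the matrix-algebra statement underlying Elliott's theorem --- this is precisely why Theorem~\ref{catgrhsf} is available. Once cycles or roses are present, the building blocks become graded matrix algebras over $K[x,x^{-1}]$ or over $\LL_m$, which are graded but far from graded-semisimple; proving that graded homomorphisms between \emph{these} blocks are still classified by $K^{\gr}_0$ as $\mathbb Z[x,x^{-1}]$-modules --- and, for fullness, realizing an arbitrary order-preserving linear map by an honest graded algebra map that respects the Leavitt relations --- is the crux, and is exactly the step not yet established in general. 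A secondary difficulty is that the intertwining, routine in the $C^*$/AF setting where approximate unitary equivalence and functional calculus are available, must here be carried out by \emph{exact} conjugation by homogeneous units, which is considerably more rigid; controlling this rigidity while keeping the construction graded is where I expect the real work to lie.
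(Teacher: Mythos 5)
You are addressing Conjecture~\ref{strongconj}, which the paper does not prove: it is posed as an open problem, and the only cases it establishes are graded matricial algebras over a graded field (Theorem~\ref{catgrhsf}) and, building on that, polycephaly graphs (Theorem~\ref{mani543}). So there is no proof in the paper to compare with, and your proposal must be judged on its own. As written it is a program rather than a proof: the two steps you yourself identify as ``the crux'' --- a classification of graded homomorphisms between blocks involving $K[x^l,x^{-l}]$ and $\LL_m$ by $K^{\gr}_0$ up to graded inner automorphism, and the replacement of Elliott's approximate intertwining by exact conjugation by homogeneous units --- are left open, and they are precisely the content of the conjecture beyond what the paper already proves. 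Note also that in the purely algebraic setting the ``assembly'' of the blockwise conjugating units $u_i$ into a single unit $u$ is not a formal step: there is no topology in which the $u_i$ converge, and nothing forces the successive corrections to stabilize.

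There is, moreover, a gap that precedes both of these: the structural premise that every row-finite $\LL(E)$ is a graded direct limit of graded matrix algebras over $K$, over $K[x^l,x^{-l}]$, and over the Leavitt algebras $\LL_m$. The paper proves such a decomposition only for polycephaly graphs (Theorem~\ref{polyheadb}), i.e., when no cycle or rose has an exit, and it fails once a cycle has an exit. Take the graph $E_1$ of Examples~\ref{smallgraphs} and~\ref{smallgraphs2}: there $K^{\gr}_0(\LL(E_1))\cong \mathbb Z\oplus\mathbb Z$ with $x$ acting by $N^t=\left(\begin{smallmatrix}1&1\\1&0\end{smallmatrix}\right)$, i.e., the cyclic module $\mathbb Z[x,x^{-1}]/(x^2-x-1)$. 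By Theorems~\ref{f4j5h6h8}, \ref{cothemp} and~\ref{re282}, the $K^{\gr}_0$ of your three block types are, respectively, free $\mathbb Z[x,x^{-1}]$-modules, modules annihilated by $x^l-1$ (comets), and modules annihilated by $x-m$ (roses). Both $x^l-1$ and $x-m$ act injectively on $\mathbb Z[x,x^{-1}]/(x^2-x-1)$ (it is isomorphic to $\mathbb Z[\phi]$, $\phi$ the golden ratio, a domain in which $\phi^l-1\neq 0$ and $\phi-m\neq 0$), so every $\mathbb Z[x,x^{-1}]$-module homomorphism from a comet- or rose-type $K$-group into it vanishes. Hence, if $\LL(E_1)$ were such a limit, then --- since $K^{\gr}_0$ commutes with graded direct limits and filtered colimits are exact --- the torsion summands would have zero image in the limit, and $\mathbb Z[x,x^{-1}]/(x^2-x-1)$ would be a filtered colimit of free $\mathbb Z[x,x^{-1}]$-modules, hence flat; but a nonzero module annihilated by $x^2-x-1$ is torsion and never flat. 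So your inductive machinery cannot even be set up on precisely the class of graphs (cycles with exits) for which the conjecture is genuinely open; on the class where it can be set up, it reproduces the paper's Theorems~\ref{catgrhsf} and~\ref{mani543}.
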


\section{The graded Grothendieck group} 

In this note all modules are considered right modules unless stated otherwise. For a set $\Gamma$ by $\mathbb Z ^{\Gamma}$ we mean $\Gamma$-copies of $\mathbb Z$,
i.e., $\bigoplus_{\gamma \in \Gamma} \mathbb Z_{\gamma}$ where
$\mathbb Z_{\gamma} = \mathbb Z$ for each $\gamma \in \Gamma$.

\subsection{Graded rings}\label{pregr529}

A ring $A = \textstyle{\bigoplus_{ \ga \in \Ga}} A_{\ga}$ is called a
\emph{$\Ga$-graded ring}, or simply a \emph{graded ring},
\index{graded ring} if $\Ga$ is an (abelian) group, each $A_{\ga}$ is
an additive subgroup of $A$ and $A_{\ga}  A_{\delta} \subseteq
A_{\ga + \delta}$ for all $\ga, \delta \in \Ga$. 
The
elements of $A_\ga$ are called \emph{homogeneous of degree $\ga$}
and we write deg$(a) = \ga$ if $a \in A_{\ga}$. We let $A^{h} =
\bigcup_{\ga \in \Ga} A_{\ga}$ be the set of homogeneous elements of
$A$. We call the set $\Gamma_A=\{ \ga \in \Gamma \mid A_\ga \not = 0 \}$ the {\it support} of $A$.
A $\Ga$-graded ring $A=\textstyle{\bigoplus_{ \ga \in \Ga}} A_{\ga}$
is called a \emph{strongly graded ring} if $A_{\ga} A_{\de} = A_{\ga +\de}$
for all $\ga, \de \in \Ga$. We say $A$ has a trivial grading, or $A$ is concentrated in
degree zero if the support of $A$ is the trivial group, i.e., $A_0=A$ and $A_\ga=0$ for $\ga \in \Gamma \backslash \{0\}$.

Let $A$ be a $\Gamma$-graded ring. A \emph{graded right $A$-module} $M$ is defined to be a right $A$-module
with a direct sum decomposition $M=\bigoplus_{\ga \in \Gamma}
M_{\ga}$, where each $M_{\ga}$ is an additive subgroup of $M$  such that 
$M_{\la} \cdot A_{\ga} \subseteq M_{\ga + \la}$ for all $\ga, \la \in
\Ga$. For $\de \in \Ga$, we define the
$\de$-{\it suspended} $A$-module $M(\de)$ \label{deshiftedmodule} as $M(\de)
=\bigoplus_{\ga \in \Ga} M(\de)_\ga$ where $M(\de)_\ga =
M_{\ga+\de}$. 
For two graded $A$-modules $M$ and $N$, a {\it graded $A$-module homomorphism of degree $\delta$} is an $A$-module homomorphism $f:M\rightarrow N$, such that $f(M_\ga)\subseteq N_{\ga+\delta}$ for any $\ga \in \Ga$. By a {\it graded homomorphism} we mean a graded homomorphism of degree $0$.  
By $\mbox{gr-}A$, we denote the category of graded right $A$-modules with graded homomorphisms.  
For $\alpha \in \Gamma$,  the {\it $\alpha$-suspension functor} $\mathcal T_\alpha:\mbox{gr-}A\rightarrow \mbox{gr-}A$, $M \mapsto M(\alpha)$  is an isomorphism with the property
$\mathcal T_\alpha \mathcal T_\beta=\mathcal T_{\alpha + \beta}$, $\alpha,\beta\in \Gamma$.

For graded modules $M$ and $N$, if $M$ is finitely generated, then $\Hom_{A}(M,N)$ has a natural $\Gamma$-grading 
\begin{equation}\label{hgd543p}
\Hom_{A}(M,N)=\bigoplus_{\gamma \in \Gamma} \Hom(M,N)_{\gamma}, 
\end{equation}
where $\Hom(M,N)_{\gamma}$ is the set of all graded $A$-module homomorphisms of degree $\gamma$ (see \cite[\S2.4]{grrings}). 

A graded $A$-module $P$ is called a {\it graded projective} module if $P$ is a projective module. One can check that $P$ is graded projective if and only if the functor $\Hom_{\mbox{\tiny gr-}A}(P,-)$ is an exact functor in $\mbox{gr-}A$ if and only if $P$ is  graded isomorphic to a direct summand of a graded free $A$-module. In particular, if $P$ is a graded finitely generated  projective $A$-module, then there is a graded finitely generated  projective $A$-module $Q$ such that $P\oplus Q\cong_{\gr} A^n(\overline \alpha)$, where $\overline \alpha =(\alpha_1,\cdots,\alpha_n)$, $\alpha_i \in \Ga$.
By $\Pgr(A)$ we denote the category of graded finitely generated  projective right $A$-modules with graded homomorphisms. 
Note that the suspension functor $\mathcal T_\alpha$ restricts to the category of graded finitely generated  
projective modules.

Let $A= \textstyle{\bigoplus_{\ga \in \Ga}} A_{\ga}$ and $B= \textstyle{\bigoplus_{\ga \in
\Ga}} B_{\ga}$ be graded rings.  Then $A\times B$ has a natural grading  given by $A\times B  = \textstyle{\bigoplus_{\ga \in \Gamma}} (A \times B)_{\ga}$ where $(A \times B)_{\ga}=A_\ga \times B_\ga$.  
Similarly,  if $A$ and $B$ are $K$-modules for a field $K$ (where here $K$ has a trivial grading), then $A
\otimes_K B$ has a natural grading  given by
$A \otimes_K B = \textstyle{\bigoplus_{\ga \in \Gamma}} (A \otimes_K B)_{\ga}$
where
\begin{equation}\label{tengr}
(A \otimes_K B)_{\ga} = \Big \{ \sum_i a_i \otimes b_i   \mid   a_i \in
A^h, b_i \in B^h, \deg(a_i)+\deg(b_i) = \ga \Big\}.
\end{equation}

Some of the rings we are  dealing with in this note are of the form $K[x,x^{-1}]$ where $K$ is a field. This is an example of a graded field. A nonzero commutative $\Ga$-graded  ring $A$  is called a {\it graded field} if  every nonzero homogeneous element has an inverse.  It follows that $A_0$ is a field and $\Gamma_A$ is an abelian group. Similar to the non-graded setting, one can show that any $\Gamma$-graded module $M$ over a graded field $A$ is graded free, i.e., it is generated by a homogeneous basis and the graded bases have the same cardinality (see \cite[Proposition~4.6.1]{grrings}). Moreover, if $N$ is a graded submodule of $M$, then 
\begin{equation}\label{dimcouti}
\dim_A(N)+\dim_A(M/N)=\dim_A(M).
\end{equation} In this note, all  graded fields have  torsion free abelian group gradings (in fact, in all our statements $\Gamma=\mathbb Z$,  $A=K[x^n,x^{-n}]$ is a $\Gamma$-graded field with $\Ga_A=n \mathbb Z$, for some $n\in \mathbb N$). However, this assumption is not necessary for the statements below.

\subsection{Grading on matrices}\label{matgrhe}

Let $A$ be a $\Gamma$-graded ring and $M=M_1\oplus \dots \oplus M_n$, where $M_i$ are graded finitely generated  right $A$-modules. So $M$ is also a graded $A$-module. Let $\pi_j:M\rightarrow M_j$ and $\kappa_j:M_j\rightarrow M$ be the (graded) projection and injection homomorphisms. Then there is a graded isomorphism 
\begin{equation}\label{fgair}
\End_A(M)\rightarrow \big[\Hom(M_j,M_i)\big]_{1\leq i,j\leq n}
\end{equation} defined by 
$\phi \mapsto [\pi_i\phi\kappa_j]$, $1\leq i,j \leq n$. 

For a graded ring $A$, observe that $\Hom_A\big (A(\delta_i),A(\delta_j)\big )\cong_{\gr} A(\delta_j-\delta_i)$ (see~\ref{hgd543p}). If 
\[V=A(-\delta_1)\oplus A(-\delta_2) \oplus \dots \oplus A(-\delta_n),\] then by ~(\ref{fgair}),
\[\End_A(V)\cong_{\gr} \big [\Hom\big (A(-\delta_j),A(-\delta_i)\big )\big ]\cong_{\gr}\big[A(\delta_j-\delta_i)\big]_{1\leq i,j\leq n}.\]
Denoting this graded matrix ring by $\M_n(A)(\de_1,\dots,\de_n)$, we have 

\begin{equation}\label{pjacko}
\M_n(A)(\de_1,\dots,\de_n) =
\begin{pmatrix}
A(\de_1 - \de_1) & A(\de_2  - \de_1) & \cdots &
A(\de_n - \de_1) \\
A(\de_1 - \de_2) & A(\de_2 - \de_2) & \cdots &
A(\de_n  - \de_2) \\
\vdots  & \vdots  & \ddots & \vdots  \\
A(\de_1 - \de_n) & A(\de_2 - \de_n) & \cdots &
A(\de_n - \de_n)
\end{pmatrix}.
\end{equation}
Therefore for $\la \in \Gamma$, ${\M_n (A)(\de_1,\dots,\de_n)}_{\la}$, the $\la$-homogeneous elements, are the $n \times n$-matrices over
$A$ with the degree shifted (suspended) as follows:
\begin{equation}\label{mmkkhh}
{\M_n(A)(\de_1,\dots,\de_n)}_{\la} =
\begin{pmatrix}
A_{ \la+\de_1 - \de_1} & A_{\la+\de_2  - \de_1} & \cdots &
A_{\la +\de_n - \de_1} \\
A_{\la + \de_1 - \de_2} & A_{\la + \de_2 - \de_2} & \cdots &
A_{\la+\de_n  - \de_2} \\
\vdots  & \vdots  & \ddots & \vdots  \\
A_{\la + \de_1 - \de_n} & A_{ \la + \de_2 - \de_n} & \cdots &
A_{\la + \de_n - \de_n}
\end{pmatrix}.
\end{equation}

This also shows that 
 \begin{equation}\label{hogr}
\deg(e_{ij}(x))=\deg(x)+\delta_i-\delta_j.
\end{equation}  
Setting $\overline \delta=(\de_1, \dots,\de_n)\in \Gamma^n$, one denotes the graded matrix ring~(\ref{pjacko}) as $\M_n(A)(\overline \de)$. Consider the graded $A$-bi-module 
\[A^n(\overline \de)=A(\de_1)\oplus \dots \oplus A(\de_n).\] Then one can check that $A^n(\overline \de)$ is a graded right $\M_n(A)(\overline \de)$-module and $A^n(-\overline \de)$ is a graded left $\M_n(A)(\overline \de)$-module, where 
$-\overline \de=(-\de_1, \dots,-\de_n)$. 
 
Note that if $A$ has a trivial grading,  this construction induces a {\it good grading} on $\M_n(A)$. 
These group gradings on matrix rings have been
studied by D\u{a}sc\u{a}lescu et al.~\cite{dascalescu}. Therefore for $x \in A$, 
\begin{equation}\label{oiuytr}
\deg(e_{ij}(x))=\delta_i - \delta_j.
\end{equation}

We will use these gradings on matrices to describe the graded structure of Leavitt path algebras of acyclic and comet graphs. 

For a $\Ga$-graded ring $A$, $\overline \alpha = (\al_1 , \ldots \al_m) \in \Ga^m$ and $\overline \delta=(\de_1, \ldots ,\de_n) \in \Ga^n$, let
\[
\M_{m \times n} (A)[\overline \alpha][\overline \delta] =
\begin{pmatrix}
A_{\al_1 -\de_1} & A_{\al_1 -\de_2} & \cdots &
A_{\al_1 -\de_n } \\
A_{\al_2 -\de_1} & A_{\al_2 -\de_2} & \cdots &
A_{\al_2 -\de_n  } \\
\vdots  & \vdots  & \ddots & \vdots  \\
A_{\al_m-\de_1} & A_{\al_m -\de_2} & \cdots & A_{\al_m -\de_n}
\end{pmatrix}.
\]
So $\M_{m\times n} (A)[\overline \alpha][\overline \delta]$ consists of matrices with the
$ij$-entry in $A_{\al_i -\de_j}$.

\begin{prop} \label{rndconggrrna}
Let $A$ be a $\Ga$-graded ring and let $\overline \alpha = (\al_1 , \ldots , \al_m) \in \Ga^m$, 
$\overline \delta=(\de_1, \ldots,\de_n)\in \Ga^n$. Then
the following are equivalent: 

\begin{enumerate}[\upshape(1)]

\item $A^m (\overline \alpha)  \cong_{\gr} A^n(\overline \delta)  $ as graded right $A$-modules. 

\smallskip 

\item $A^m (-\overline \alpha) \cong_{\gr} A^n(-\overline \delta)  $ as graded left $A$-modules. 



\smallskip 

\item There exist $a=(a_{ij}) \in \M_{n\times m} (A)[\overline \delta][\overline \alpha]$ and 
$b=(b_{ij}) \in \M_{m\times n} (A)[\overline \alpha][\overline \delta]$ such that $ab=\IM_{n}$ and $ba=\IM_{m}$.

\end{enumerate}
\end{prop}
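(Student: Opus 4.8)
The plan is to treat condition (3) as a hub and prove $(1)\Leftrightarrow(3)$ and $(2)\Leftrightarrow(3)$; then $(1)$ and $(2)$ are equivalent through the common matrix description. The engine of the argument is the identification of graded degree-$0$ homomorphisms between suspended free modules with matrices whose entries lie in prescribed homogeneous components, which is essentially assembled from the material preceding the statement, namely the grading~(\ref{hgd543p}) on $\Hom$, the computation $\Hom_A(A(\al_i),A(\de_j))\cong_{\gr} A(\de_j-\al_i)$, and the additive decomposition~(\ref{fgair}).

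First I would set up the dictionary for right modules. The standard generator $\varepsilon_i$ of the $i$-th summand of $A^m(\overline{\alpha})$ is homogeneous of degree $-\al_i$, since $1\in A_0=A(\al_i)_{-\al_i}$. A graded homomorphism of degree $0$ therefore sends $\varepsilon_i$ to an element of degree $-\al_i$ in $A^n(\overline{\delta})$; writing $f(\varepsilon_i)=\sum_j \eta_j\, a_{ji}$ in terms of the generators $\eta_j$ (of degree $-\de_j$) forces $a_{ji}\in A_{\de_j-\al_i}$. Thus a graded degree-$0$ map $f\colon A^m(\overline{\alpha})\to A^n(\overline{\delta})$ of right modules is exactly the datum of a matrix $a\in\M_{n\times m}(A)[\overline{\delta}][\overline{\alpha}]$ acting on column vectors by left multiplication, and, exactly as in the ungraded case, left multiplication by such a matrix is a right-module map and composition of maps corresponds to the product of the matrices. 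With this in hand, $(1)\Rightarrow(3)$ is immediate: a graded isomorphism $A^m(\overline{\alpha})\cong_{\gr}A^n(\overline{\delta})$ is a pair of mutually inverse graded degree-$0$ homomorphisms $f,g$, whose associated matrices $a\in\M_{n\times m}(A)[\overline{\delta}][\overline{\alpha}]$ and $b\in\M_{m\times n}(A)[\overline{\alpha}][\overline{\delta}]$ satisfy $ab=\IM_n$ and $ba=\IM_m$ once one checks that $g\circ f$ has matrix $ba$ and $f\circ g$ has matrix $ab$. Conversely $(3)\Rightarrow(1)$: given such $a,b$, the homomorphisms they define are mutually inverse, hence graded isomorphisms.

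The equivalence $(2)\Leftrightarrow(3)$ is entirely parallel, with ``right'' replaced by ``left.'' For left modules a graded degree-$0$ homomorphism is given by right multiplication by a matrix acting on row vectors; the analogous degree count shows that a graded map $A^m(-\overline{\alpha})\to A^n(-\overline{\delta})$ of left modules is the same as a matrix in $\M_{m\times n}(A)[\overline{\alpha}][\overline{\delta}]$, whose $ik$-entry lies in $A_{\al_i-\de_k}$. A graded isomorphism of left modules is again a pair of mutually inverse such maps, with matrices $B\in\M_{m\times n}(A)[\overline{\alpha}][\overline{\delta}]$ and $C\in\M_{n\times m}(A)[\overline{\delta}][\overline{\alpha}]$; now $\psi\circ\phi$ has matrix $BC$ and $\phi\circ\psi$ has matrix $CB$, so putting $b=B$ and $a=C$ yields precisely $ba=\IM_m$ and $ab=\IM_n$. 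The pleasant point here is that the matrix condition~(3) is \emph{self-dual}: the very same pair $(a,b)$ simultaneously encodes the right-module isomorphism of~(1) (via left multiplication) and the left-module isomorphism of~(2) (via right multiplication), which is why~(1) and~(2) coincide.

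I do not expect a genuine conceptual obstacle here; the whole content is careful bookkeeping. The two places where a slip would break the argument are, first, tracking the suspensions so that the homogeneous components of the entries come out exactly as $A_{\de_j-\al_i}$ for~$a$ and $A_{\al_i-\de_j}$ for~$b$, and second, keeping the left-versus-right multiplication conventions consistent between the right-module and left-module pictures so that composition corresponds to matrix multiplication in the correct order, producing the identities $ab=\IM_n$, $ba=\IM_m$ rather than their transposes. Making these two conventions line up across both pictures is the one delicate step, and everything else is routine.
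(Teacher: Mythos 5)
Your proposal is correct and follows essentially the same route as the paper's proof: both treat (3) as the hub, identify graded degree-zero maps $A^m(\overline\alpha)\to A^n(\overline\delta)$ of right modules with left multiplication by matrices in $\M_{n\times m}(A)[\overline\delta][\overline\alpha]$ (with the same degree bookkeeping $\deg(a_{ij})=\de_i-\al_j$), and handle (2)$\Leftrightarrow$(3) by the parallel right-multiplication picture for left modules. Your explicit observation that the single pair $(a,b)$ simultaneously realizes both isomorphisms is exactly what the paper leaves implicit in its "the rest follows easily."
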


\begin{proof}
(1) $\Rightarrow$ (3) Let $\phi: A^m(\overline \al) \rightarrow  A^n (\overline \de)$ and $\psi: A^n(\overline \delta)  \rightarrow  A^m (\overline \alpha)$ 
 be graded right $A$-modules isomorphisms such that $\phi\psi=1$ and $\psi\phi=1$. Let $e_j$ denote the standard basis element of $A^m(\overline \alpha)$ with  $1$ in the $j$-th entry and zeros elsewhere. Then let 
 $\phi(e_j)=(a_{1j},a_{2j},\dots,a_{nj})$, $1\leq j \leq m$. Since $\phi$ is a graded map, comparing the grading of both sides, one can observe that $\deg(a_{ij})=\de_i-\alpha_j$. Note that the map $\phi$ is represented by the left multiplication with the matrix $a=(a_{ij})_{n\times m} \in \M_{n\times m} (A)[\overline \delta][\overline \alpha]$.   In the same way one can construct $b  \in \M_{m\times n} (A)[\overline \alpha][\overline \delta]$ which induces $\psi$. Now $\phi\psi=1$ and $\psi\phi=1$ translate to $ab=\IM_{n}$ and $ba=\IM_{m}$. 

(3) $\Rightarrow$ (1)  If $a \in  \M_{n\times m} (A)[\overline \delta][\overline \alpha]$, then multiplication from the left, induces  a graded right 
$A$-module homomorphism $\phi_a:A^m(\overline \alpha) \longrightarrow A^n(\overline \delta)$. Similarly $b$ induces $\psi_b: A^n(\overline \delta) \longrightarrow A^m(\overline \alpha)$. Now $ab=\IM_{n}$ and $ba=\IM_{m}$ translate to 
$\phi_a\psi_b=1$ and $\psi_b\phi_a=1$.

(2) $\Longleftrightarrow$ (3) This part is similar to the previous cases by considering the matrix multiplication from the right. Namely, the graded left $A$-module homomorphism $\phi: A^m (-\overline \alpha) \rightarrow A^n(-\overline \delta)  $ represented by a matrix multiplication from the right of the form $\M_{m\times n} (A)[\overline \alpha][\overline \delta]$ and similarly $\psi$ gives a matrix  in 
$\M_{n\times m} (A)[\overline \de][\overline \al]$. The rest follows easily. 
 \end{proof}

When $m=n=1$ in Proposition~\ref{rndconggrrna} we obtain, 

\begin{cor} \label{rndcongcori}
Let $A$ be a $\Ga$-graded ring and $\alpha \in \Gamma$.  Then
the following are equivalent: 

\begin{enumerate}[\upshape(1)]

\item $A(\alpha) \cong_{\gr} A$ as graded right $A$-modules. 

\smallskip 

\item $A(-\alpha) \cong_{\gr} A$ as graded right $A$-modules. 

\smallskip 

\item $A(\alpha) \cong_{\gr} A$ as graded left $A$-modules. 

\smallskip 

\item $A(-\alpha) \cong_{\gr} A$ as graded left $A$-modules. 

\smallskip 

\item There is an invertible homogeneous  element of degree $\alpha$. 





\end{enumerate}

\end{cor}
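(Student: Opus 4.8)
The plan is to deduce the corollary directly from Proposition~\ref{rndconggrrna} by specializing to $m=n=1$ and applying it in two ways. First I would observe that when $m=n=1$ the matrix set $\M_{1\times 1}(A)[\overline\delta][\overline\alpha]$ is nothing but the single homogeneous component of $A$ indexed by the one entry of $\overline\delta$ minus the one entry of $\overline\alpha$. Thus condition (3) of the proposition collapses to the assertion that there exist homogeneous elements $a,b$ of complementary degrees with $ab=ba=1$, that is, that $A$ possesses an invertible homogeneous element of the relevant degree.

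Next, apply Proposition~\ref{rndconggrrna} with $\overline\alpha=(\alpha)$ and $\overline\delta=(0)$. Its three equivalent conditions read: $A(\alpha)\cong_{\gr}A$ as right modules (our (1)); $A(-\alpha)\cong_{\gr}A$ as left modules (our (4)); and, tracking degrees, the element $b$ of condition (3) lies in $A_{\alpha}$ while its inverse $a$ lies in $A_{-\alpha}$, so (3) is exactly the existence of an invertible homogeneous element of degree $\alpha$ (our (5)). This already yields (1)$\Leftrightarrow$(4)$\Leftrightarrow$(5).

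Then apply the same proposition with $\overline\alpha=(-\alpha)$ and $\overline\delta=(0)$. Now its conditions become $A(-\alpha)\cong_{\gr}A$ as right modules (our (2)), $A(\alpha)\cong_{\gr}A$ as left modules (our (3)), and—since the invertible element produced in condition (3) this time sits in $A_{-(-\alpha)}=A_{\alpha}$—again the existence of an invertible homogeneous element of degree $\alpha$ (our (5)). Hence (2)$\Leftrightarrow$(3)$\Leftrightarrow$(5). Because both chains pass through condition (5), all five conditions are equivalent, which is the claim.

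The computations are entirely routine; the only point requiring care is the degree bookkeeping that ensures both applications funnel into the \emph{same} condition (5), namely an invertible element of degree exactly $\alpha$ with inverse in $A_{-\alpha}$. I expect no genuine obstacle, since the substance is already contained in Proposition~\ref{rndconggrrna}. A reader preferring a self-contained argument could instead verify (1)$\Leftrightarrow$(5) by hand: a graded right-module isomorphism $\phi:A(\alpha)\to A$ is left multiplication by the homogeneous element $\phi(1)\in A_{-\alpha}$, whose inverse lies in $A_{\alpha}$ and furnishes the required invertible element of degree $\alpha$; conversely an invertible $v\in A_{\alpha}$ gives the graded isomorphism $a\mapsto v^{-1}a$. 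The remaining equivalences then follow by the symmetry between left and right modules and by replacing $\alpha$ with $-\alpha$.
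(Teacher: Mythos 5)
Your proposal is correct and follows exactly the paper's route: the paper's own proof consists of the single line ``This follows from Proposition~\ref{rndconggrrna},'' and your two specializations ($\overline\alpha=(\alpha),\overline\delta=(0)$ and $\overline\alpha=(-\alpha),\overline\delta=(0)$), with the degree bookkeeping showing both chains meet in condition (5), are precisely the details that line leaves implicit.
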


\begin{proof}
This follows from Proposition~\ref{rndconggrrna}. 
\end{proof}

\subsection{Leavitt algebras as graded rings} \label{creekside}
Let $K$ be a field, $n$ and $k$ positive integers and $A$ the free associative $K$-algebra generated by symbols $\{x_{ij},y_{ji} \mid 1\leq i \leq n+k, 1\leq j \leq n \}$ subject to relations (coming from) 
\[ Y\cdot X=I_{n,n} \qquad \text{ and } \qquad X\cdot Y=I_{n+k,n+k}, \] where 
\begin{equation} \label{breaktr}
Y=\left( 
\begin{matrix} 
y_{11} & y_{12} & \dots & y_{1,n+k}\\ 
y_{21} & y_{22} & \dots & y_{2,n+k}\\ 
\vdots & \vdots & \ddots & \vdots\\ 
y_{n,1} & y_{n,2} & \dots & y_{n,n+k} 
\end{matrix} 
\right), ~~~~
X=\left( 
\begin{matrix} 
x_{11} & x_{12} & \dots & x_{1,n}\\ 
x_{21} & x_{22} & \dots & x_{2,n}\\ 
\vdots & \vdots & \ddots & \vdots\\ 
x_{n+k,1} & x_{n+k,2} & \dots & x_{n+k,n} 
\end{matrix} 
\right). 
\end{equation} 

This algebra was studied by Leavitt in relation with its type in \cite{vitt56,vitt57,vitt62}.  
In~\cite[p.190]{vitt56}, he studied this algebra for $n=2$ and $k=1$, where he showed that the algebra has no zero divisors, in
~\cite[p.322]{vitt57} for arbitrary $n$ and $k=1$ and
in~\cite[p.130]{vitt62}  for arbitrary $n$ and $k$ and established that these algebras are of type $(n,k)$ (see \S\ref{gtr5654}) and when $n\geq 2$ they are domains. 
We denote this algebra by $\LL(n,k+1)$. (Cohn's notation in~\cite{cohn11} for this algebra is $V_{n,n+k}$.)
Throughout the text we sometimes denote $\LL(1,k)$ by $\LL_k$. 

Assigning $\deg(y_{ji})=(0,\dots,0,1,0\dots,0)$ and $\deg(x_{ij})=(0,\dots,0,-1,0\dots,0)$,  $1\leq i \leq n+k$, $1\leq j \leq n$, in $\bigoplus_n \mathbb Z$, where $1$ and $-1$ are in $j$-th entries respectively, makes the free algebra generated by $x_{ij}$ and $y_{ji}$ a graded ring. Furthermore, one can easily observe that the relations coming from ~(\ref{breaktr}) are all homogeneous with respect to this grading, so that the Leavitt algebra $\LL(n,k+1)$  is a  $\bigoplus_n \mathbb Z$-graded ring. Therefore, $\LL(1,k+1)$ is a $\mathbb Z$-graded ring. In fact this is a strongly graded ring by Theorem~\ref{hazst}. 

\subsection{Graded IBN and graded type}\label{gtr5654} A ring $A$ has an {\it invariant basis number} (IBN) if any two bases of a free (right) $A$-module have the same cardinality, i.e., if $A^n \cong A^{m}$ as $A$-modules, then $n=m$. When $A$ does not have IBN, the {\it type} of $A$ is defined as a pair of positive integers $(n,k)$ such that $A^n \cong A^{n+k}$ as $A$-modules and these are the smallest number with this property. It was shown that if $A$ has type $(n,k)$, then $A^m\cong A^{m'}$ if and only if $m=m'$ or $m,m' >n$ and $m \equiv m' \pmod{k}$ (see \cite[p. 225]{cohn11}, \cite[Theorem~1]{vitt62}).

A graded ring $A$ has a {\it graded invariant basis number} (gr-IBN) if any two homogeneous bases of a graded free (right) $A$-module have the same cardinality, i.e., if $ A^m(\overline \alpha)\cong_{\gr}  A^n(\overline \delta) $, where  $\overline \alpha=(\alpha_1,\dots,\alpha_m)$ and $\overline \delta=(\delta_1,\dots,\delta_n)$, then $m=n$. Note that contrary to the non-graded case, this does not imply that two graded free modules with bases of the same cardinality are graded isomorphic (see Proposition~\ref{rndconggrrna}). A graded ring $A$ has {\it IBN in} $\mbox{gr-}A$, if $A^m \cong_{\gr} A^n$ then $m=n$. If $A$ has IBN in $\mbox{gr-}A$, then $A_0$ has IBN. Indeed, if $A_0^m \cong A_0^n$ as $A_0$-modules, then $A^m\cong_{\gr} A_0^m \otimes_{A_0} A \cong A_0^n \otimes_{A_0} A \cong_{\gr}A^n$, so $n=m$ (see \cite[p. 215]{grrings}).

When the graded ring $A$ does not have gr-IBN, the {\it graded type} of $A$ is defined as a pair of positive integers $(n,k)$ such that $A^n(\overline \delta) \cong_{\gr} A^{n+k}(\overline \alpha)$ as $A$-modules, for some 
$\overline \delta=(\delta_1,\dots,\delta_n)$ and $\overline \alpha=(\alpha_1,\dots,\alpha_{n+k})$ and these are the smallest number with this property. 

In Proposition~\ref{grtypel1} we show that the Leavitt algebra $\LL(n,k+1)$ has graded type $(n,k)$. Using graded $K$-theory we will also show that  (see Corollary~\ref{k43shab})  
\[\LL(1,n)^k (\la_1,\dots,\la_k) \cong_{\gr} \LL(1,n)^{k'}(\ga_1,\dots,\ga_{k'})\] if and only if 
$\sum_{i=1}^k n^{\la_i}=\sum_{i=1}^{k'} {n}^{\ga_i}$.

Let $A$ be a $\Gamma$-graded ring such that $A^m(\overline \alpha) \cong_{\gr} A^n(\overline \delta)$, where $\overline \alpha=(\alpha_1,\dots,\alpha_m)$ and $\overline \delta=(\delta_1,\dots,\delta_n)$. Then there is a universal $\Gamma$-graded ring $R$ such that $ R^m(\overline \alpha)  \cong_{\gr} R^n(\overline \delta)$, and a graded ring homomorphism $R\rightarrow A$ which induces the graded isomorphism 
$A^m(\overline \alpha) \cong_{\gr} R^m(\overline \alpha) \otimes_R A \cong_{\gr} R^n(\overline \de) \otimes_R A \cong_{\gr} A^n(\overline \de)$. Indeed, by Proposition~\ref{rndconggrrna}, there are matrices 
$a=(a_{ij}) \in \M_{n\times m} (A)[\overline \delta][\overline \alpha]$ and 
$b=(b_{ij}) \in \M_{m\times n} (A)[\overline \alpha][\overline \delta]$ such that $ab=\IM_{n}$ and $ba=\IM_{m}$.
The free ring generated by symbols in place of $a_{ij}$ and $b_{ij}$ subject to relations imposed by $ab=\IM_{n}$ and $ba=\IM_{m}$ is the desired universal graded ring. In detail, let $F$ be a free ring generated by $x_{ij}$, $1\leq i \leq n$, $1\leq j \leq m$  and $y_{ij}$, $1\leq i \leq m$, $1\leq j \leq n$. Assign the degrees $\deg(x_{ij})=\delta_i-\alpha_j$ and $\deg(y_{ij})=\alpha_i-\delta_j$. This makes $F$ a $\Gamma$-graded ring. 
Let $R$ be a ring $F$ modulo the relations $\sum_{s=1}^m x_{is}y_{sk}=\delta_{ik}$, $1\leq i,k \leq n$ and $\sum_{t=1}^n y_{it}x_{tk}=\delta_{ik}$, $1\leq i,k \leq m$, where $\delta_{ik}$ is the Kronecker delta. 
Since all the relations are homogeneous, $R$ is a $\Gamma$-graded ring. Clearly the map sending $x_{ij}$ to $a_{ij}$ and $y_{ij}$ to $b_{ij}$ induces a graded ring homomorphism $R \rightarrow A$. Again Proposition~\ref{rndconggrrna} shows that $R^m(\overline \alpha) \cong_{\gr} R^n(\overline \delta)  $.

\begin{proposition}\label{grtypel1} Let $R=\LL(n,k+1)$ be the  Leavitt algebra of type $(n,k)$. Then 
\begin{enumerate}[\upshape(1)]

\item $R$ is a universal $\bigoplus_n \mathbb Z$-graded ring  which does not have gr-IBN.

\smallskip

\item $R$ has  graded type $(n,k)$

\smallskip

\item For $n=1$, $R$ has IBN in  $R\mbox{-gr}$. 

\end{enumerate}
\end{proposition}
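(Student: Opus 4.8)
The plan is to extract all three parts from the single graded relation forced by the defining presentation of $R=\LL(n,k+1)$, together with the universal construction recorded immediately before the statement.

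\emph{The basic graded isomorphism and part (1).} First I would record the degrees dictated by homogeneity of the relations in~(\ref{breaktr}): $\deg(y_{ji})=e_j$ and $\deg(x_{ij})=-e_j$, where $e_1,\dots,e_n$ is the standard basis of $\bigoplus_n\mathbb Z$. Reading $Y$ as left multiplication $R^{n+k}\to R^n$ and $X$ as its two-sided inverse, and matching entry degrees against the pattern~(\ref{mmkkhh}) of $\M_{n\times(n+k)}(R)[\ol\delta][\ol\alpha]$, the consistent choice is $\ol\alpha=(0,\dots,0)\in\Ga^{n+k}$ and $\ol\delta=(e_1,\dots,e_n)$, so Proposition~\ref{rndconggrrna} gives
\[
R^{n+k}\cong_{\gr}R^n(e_1,\dots,e_n)=R(e_1)\oplus\cdots\oplus R(e_n).
\]
For part (1) I would then observe that the presentation of $R$ is, after the evident relabelling interchanging the roles of $x$ and $y$, exactly the universal presentation built before the Proposition for the data $m=n+k$, $\ol\alpha=0$, $\ol\delta=(e_1,\dots,e_n)$ (the degree assignments $\delta_i-\alpha_j=e_i$ and $\alpha_i-\delta_j=-e_j$ match on the nose). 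Hence $R$ is the universal $\bigoplus_n\mathbb Z$-graded ring carrying the displayed isomorphism, and since $n+k\neq n$ this is precisely a failure of gr-IBN.

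\emph{Part (2).} Existence of a graded isomorphism realizing the pair $(n,k)$ is the displayed one, read as $R^n(e_1,\dots,e_n)\cong_{\gr}R^{n+k}$. For minimality I would forget the grading: any graded isomorphism $R^p(\ol\gamma)\cong_{\gr}R^q(\ol\gamma')$ yields an ungraded $R^p\cong R^q$, and the classification of free-module isomorphisms over a ring of type $(n,k)$ recalled in~\S\ref{gtr5654} forces, whenever $p\neq q$, that $p\ge n$ and $q-p$ is a positive multiple of $k$. Thus no pair of unequal ranks can improve on first coordinate $n$ and difference $k$, both of which are attained, so the graded type is exactly $(n,k)$.

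\emph{Part (3).} Now $n=1$, $\Ga=\mathbb Z$, and $R=\LL_{k+1}$. Given a suspension-free graded isomorphism $R^m\cong_{\gr}R^p$, I would restrict to the homogeneous component of degree $0$; since $(R^m)_0=R_0^m$, this produces an $R_0$-module isomorphism $R_0^m\cong R_0^p$, and it remains to prove that $R_0$ has IBN. The plan is to identify $R_0$ as the directed union $\bigcup_a \M_{(k+1)^a}(K)$, where the degree-$0$ part of length $a$ is spanned by the monomials $y_{i_1}\cdots y_{i_a}x_{j_1}\cdots x_{j_a}$, forming a full set of $(k+1)^a\times(k+1)^a$ matrix units, with unital connecting inclusions coming from $1=\sum_i y_ix_i$. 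Then the finitely many entries of the rectangular matrices witnessing $R_0^m\cong R_0^p$ all lie in a single $\M_{(k+1)^a}(K)$, giving $\M_{(k+1)^a}(K)^m\cong\M_{(k+1)^a}(K)^p$; as a matrix algebra over a field has IBN, $m=p$. (Equivalently one may note $K_0(R_0)\cong\mathbb Z[1/(k+1)]$ is torsion-free with $[R_0]\neq0$.)

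I expect the identification of $R_0$ with an ultramatricial algebra, and thereby its IBN, to be the only substantive step; parts (1) and (2) are essentially bookkeeping around the universal construction and the known ungraded type. It is worth stressing that $n=1$ is essential: here $R$ is strongly $\mathbb Z$-graded with $R_0$ ultramatricial (so $(-)_0$ is even a Dade equivalence $\mbox{gr-}R\simeq\mbox{mod-}R_0$), whereas for $n\ge2$ the $\bigoplus_n\mathbb Z$-grading no longer carries this structure and the degree-$0$ reduction fails.
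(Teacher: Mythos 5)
Your proposal is correct and follows essentially the same route as the paper's proof: part (1) via matching the Leavitt presentation with the universal graded presentation and invoking Proposition~\ref{rndconggrrna}, part (2) by forgetting the grading and appealing to the ungraded type $(n,k)$, and part (3) by restricting a graded isomorphism $R^m\cong_{\gr}R^p$ to degree zero and using that $R_0$ is ultramatricial, hence has IBN. The only differences are cosmetic: where the paper cites Cohn's result that IBN respects direct limits, you prove it inline by pushing the witnessing matrices into a finite matricial stage $\M_{(k+1)^a}(K)$, and your explicit suspension bookkeeping $R^{n+k}\cong_{\gr}R(e_1)\oplus\cdots\oplus R(e_n)$ is if anything cleaner than the paper's.
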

\begin{proof}
(1) Consider the Leavitt algebra $\LL(n,k+1)$ constructed in (\ref{breaktr}), which is a $\bigoplus_n \mathbb Z$-graded ring and is universal. Furthermore (\ref{breaktr}) combined by Proposition~\ref{rndconggrrna}(3) shows that $R^n \cong_{\gr} R^{n+k}(\overline \alpha)$. Here $\overline \alpha =(\alpha_1,\dots,\alpha_{n+k})$, where $\alpha_i=(0,\dots,0,1,0\dots,0)$ and $1$ is in $i$-th entry. This shows that $R=\LL(n,k+1)$ does not have gr-IBN. 

(2) By \cite[Theorem~6.1]{cohn11}, $R$ is of  type $(n,k)$. This immediately implies the graded type of 
$R$ is also $(n,k)$. 

(3) Suppose $R^n \cong_{\gr} R^m$. Then $R_0^n \cong R_0^m$ as $R_0$-modules. But $R_0$ is an ultramatricial algebra, i.e., direct limit of matrices over a field. Since IBN respects direct limits (\cite[Theorem~2.3]{cohn11}), $R_0$ has IBN. Therefore, $n=m$.
\end{proof}

\begin{remark}
Assigning $\deg(y_{ij})=1$ and $\deg(x_{ij})=-1$, for all $i,j$, makes $R=\LL(n,k+1)$ a $\mathbb Z$-graded algebra of graded type $(n,k)$ with $R^n\cong_{\gr} R^{n+k}(1)$. 
\end{remark}

Recall from~\S\ref{pregr529} that $\Pgr(A)$ denotes the category of graded finitely generated  projective right $A$-modules with graded homomorphisms and $\mathcal T_\alpha (P)=P(\alpha)$, where $\mathcal T_\alpha$ is a suspension functor and $P$ a graded finitely generated $A$-module.

\begin{prop}[Graded Morita Equivalence] \label{grmorita}
Let $A$ be a $\Gamma$-graded ring and let $\ol \delta = (\de_1 , \ldots ,
\de_n)$, where $\de_i \in \Ga$, $1\leq i \leq n$. Then the functors
\begin{align*}
\psi : \Pgr( \M_n(A)(\ol \delta) ) & \longrightarrow \Pgr( A) \\
P & \longmapsto P  \otimes_{\M_n(A)(\ol \delta)} A^n(-\ol \delta) \\
\textrm{ and \;\;\;\; } \va : \Pgr( A) & \longrightarrow \Pgr ( \M_n(A)(\ol \delta)) \\
Q &  \longmapsto Q \otimes_A A^n(\ol \delta)
\end{align*}
form equivalences of categories and commute with suspensions, i.e, $\psi \mathcal T_{\alpha}=\mathcal T_{\alpha} \psi$, $\alpha \in \Gamma$. 
\end{prop}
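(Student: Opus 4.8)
The plan is to realize $\va$ and $\psi$ as tensoring against a pair of mutually inverse graded bimodules, i.e.\ to set up the graded analogue of the classical Morita context between $A$ and $S := \M_n(A)(\ol \delta)$. As recorded just before the statement, $P_0 := A^n(\ol \delta)$ is a graded $(A,S)$-bimodule and $Q_0 := A^n(-\ol \delta)$ is a graded $(S,A)$-bimodule, so that $\va(Q) = Q \otimes_A P_0$ carries a graded right $S$-module structure and $\psi(P) = P \otimes_S Q_0$ a graded right $A$-module structure. I would first note that both functors are additive and commute with direct sums, being tensor functors, and that they preserve graded finite generation and projectivity: $\va(A) = P_0$ is a shifted graded direct summand of the regular module $S_S$ (a single row, since $e_{ii}S \cong_{\gr} A^n(\ol\delta)(-\de_i)$), and $\psi(S) = Q_0$ is graded free of rank $n$ over $A$; the general case then follows by passing to graded direct summands of graded free modules.

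The crux is to prove the two bimodule isomorphisms
\[
Q_0 \otimes_A P_0 \cong_{\gr} S \quad \text{as graded $S$-bimodules}, \qquad P_0 \otimes_S Q_0 \cong_{\gr} A \quad \text{as graded $A$-bimodules}.
\]
For the first I would use the outer-product map sending a column tensored with a row to the rank-one matrix $(a_i b_j)_{ij}$, and for the second the inner-product (trace) map $(b_j)_j \otimes (a_i)_i \mapsto \sum_j b_j a_j$. That these are well balanced over the relevant ring and are bimodule homomorphisms, with inverses built from the matrix units and the relation $\sum_i e_{ii} = \IM_n$, is the usual Morita computation. The one genuinely graded point is to check that both maps are homogeneous of degree $0$ for the shifted gradings: using that the degree-$\ga$ part of $A(\de_j)$ is $A_{\ga + \de_j}$ and that of $A(-\de_i)$ is $A_{\ga - \de_i}$, a homogeneous elementary tensor of total degree $\ga$ lands precisely in the degree-$\ga$ component of the $(i,j)$-entry $A(\de_j - \de_i)$ of $S$ as described in~(\ref{mmkkhh}), respectively in $A_\ga$. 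This bookkeeping of the shifts $\de_i$ through the suspensions is the only calculation that requires genuine care.

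Granting these two isomorphisms, the equivalence is formal. By associativity of the graded tensor product,
\[
\psi\va(Q) = (Q \otimes_A P_0) \otimes_S Q_0 \cong_{\gr} Q \otimes_A (P_0 \otimes_S Q_0) \cong_{\gr} Q \otimes_A A \cong_{\gr} Q,
\]
and symmetrically $\va\psi(P) \cong_{\gr} P \otimes_S S \cong_{\gr} P$, both naturally in $Q$ and $P$; hence $\va$ and $\psi$ are mutually inverse equivalences of categories.

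It remains to check commutation with suspensions, which is a general feature of tensoring with a graded bimodule. For $\al \in \Ga$ there is a natural graded isomorphism $(P \otimes_S Q_0)(\al) \cong_{\gr} P(\al) \otimes_S Q_0$, because the grading on a tensor product adds degrees and $\mathcal T_\al$ merely re-indexes the left-hand factor; this yields $\mathcal T_\al \psi = \psi \mathcal T_\al$ at once, and the same argument applies to $\va$. The main obstacle is therefore not conceptual but the degree-tracking of the second paragraph---verifying that the outer- and inner-product maps respect the $\ol\delta$-shifted gradings exactly---which is precisely where the matrix grading~(\ref{mmkkhh}) is used.
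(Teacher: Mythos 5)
Your proposal is correct and follows essentially the same route as the paper's proof: both establish the graded bimodule isomorphisms $A^n(\ol\de)\otimes_{\M_n(A)(\ol\de)} A^n(-\ol\de)\cong_{\gr} A$ (trace map) and $A^n(-\ol\de)\otimes_A A^n(\ol\de)\cong_{\gr} \M_n(A)(\ol\de)$ (outer product), deduce the equivalence from associativity of the graded tensor product, and obtain commutation with suspensions from $(P\otimes Q)(\al)=P(\al)\otimes Q$. The extra details you supply (preservation of graded projectivity, explicit degree bookkeeping via~(\ref{mmkkhh})) are exactly the verifications the paper leaves to the reader.
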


\begin{proof}
One can check that there is a graded $A$-module isomorphism 
\begin{align*}
\theta : \; A^n(\ol\de)\otimes_{\M_n(A)(\ol\de)} A^n(-\ol \de)  &  \longrightarrow A \\
(a_1 , \ldots , a_n ) \otimes (b_1, \ldots , b_n) & \longmapsto  a_1
b_1 + \cdots + a_n b_n. 
\end{align*}
Furthermore, there is a graded $\M_n(A)(\ol \de)$-isomorphism 
\begin{align*}
\theta' : \; A^n(-\ol\de) \otimes_{A} A^n(\ol\de) &  \longrightarrow \M_n(A)(\ol\de)\\
\begin{pmatrix}
a_1 \\ \vdots \\ a_n
\end{pmatrix}
\otimes
\begin{pmatrix}
b_1 \\ \vdots \\ b_n
\end{pmatrix}
& \longmapsto
\begin{pmatrix}
a_{1} b_1 & \cdots & a_{1} b_n  \\
\vdots  &   & \vdots  \\
a_{n} b_1  & \cdots & a_{n} b_n
\end{pmatrix}
\end{align*}
Now it easily follows that $\varphi \psi$ and $\psi \varphi$ are equivalent to identity functors. The general fact that $(P\otimes Q) (\alpha)= P(\alpha)\otimes Q =P \otimes Q(\alpha)$ shows that the suspension functor commutes with $\psi$ and $\phi$. 
\end{proof}

\subsection{The Graded Grothendieck group} Let $A$ be a $\Gamma$-graded ring with identity and let $\mathcal V^{\gr}(A)$ denote the monoid of isomorphism classes of graded finitely generated   projective modules over $A$. Then the graded Grothendieck group, $K_0^{\gr}(A)$, is defined as the group completion of  $\mathcal V^{\gr}(A)$. Note that for $\alpha \in \Gamma$,  the {\it $\alpha$-suspension functor} $\mathcal T_\alpha:\mbox{gr-}A\rightarrow \mbox{gr-}A$, $M \mapsto M(\alpha)$  is an isomorphism with the property
$\mathcal T_\alpha \mathcal T_\beta=\mathcal T_{\alpha + \beta}$, $\alpha,\beta\in \Gamma$.
Furthermore, $\mathcal T_\alpha$ restricts to the category of graded finitely generated  
projective modules. This induces a $\mathbb
Z[\Gamma]$-module structure on $K_i^{\gr}(A)$, $i\geq 0$, defined by $\alpha [P]
=[P(\alpha)]$ on the generators and extended naturally. 
In particular if $A$ is a $\mathbb Z$-graded then $K_i^{\gr}(A)$ is a
$\mathbb Z[x,x^{-1}]$-module for $i\geq 0$.

Recall that there is a description of $K_0$ in terms
of idempotent matrices. In the following, we can always enlarge matrices of different sizes by adding zeros in the lower right hand corner, so that they can be considered in a ring $\M_k(A)$ for a suitable $k\in \mathbb N$. Any idempotent matrix $p \in \M_n(A)$ gives rise to the finitely generated 
projective right $A$-module $pA^n$. On the other hand any finitely generated  projective
module gives rise to an idempotent matrix. We say two idempotent
matrices $p$ and $q$ are equivalent if (after suitably enlarging
them) there are matrices $x$ and $y$ such that $xy=p$ and $yx=q$.
One can show that $p$ and $q$ are equivalent if and only if they are
conjugate if and only if the corresponding finitely generated   projective modules
are isomorphic.  Therefore $K_0(A)$ can be defined as the group
completion of the monoid of equivalence classes of idempotent
matrices with addition, $[p]+[q]=\left(\begin{matrix} p & 0\\ 0 & q
\end{matrix}\right)$. In fact, this is the definition one adopts for
$K_0$ when the ring $A$ does not have identity.

A similar construction can be given in the setting of graded rings.
Since this does not seem to be documented in literature, we give the
details here for the convenience of the reader.

Let $A$ be a $\Gamma$-graded ring and $\ol
\alpha=(\alpha_1,\dots,\alpha_n)$, where $\alpha_i \in \Gamma$. In
the following if we need to enlarge a homogeneous matrix $p\in
\M_n(A)(\ol \alpha)$, by adding zeroes in the lower right hand
corner, then we add zeros in the right hand side of $\ol
\alpha=(\alpha_1,\dots,\alpha_n)$ as well accordingly (and call it
$\ol \alpha$ again) so that $p$ is a homogeneous matrix in
$\M_k(A)(\ol \alpha)$, where $k\geq n$. Recall the definition of $ \M_k(A)[\ol \alpha][\ol \delta]$ from \S\ref{matgrhe} and note that if $x \in  \M_k(A)[\ol \alpha][\ol \delta]$ and $y \in  \M_k(A)[\ol \de][\ol \alpha]$ then $xy \in \M_k(-\ol \alpha)_0$ and $yx\in \M_k(-\ol\de)_0$.

\begin{deff}\label{equison}
Let $A$ be a $\Gamma$-graded ring, $\ol
\alpha=(\alpha_1,\dots,\alpha_n)$ and $\ol
\delta=(\delta_1,\dots,\delta_m)$, where $\alpha_i, \delta_j \in
\Gamma$. Let $p\in \M_n(A)(\ol \alpha)_0$ and $q\in \M_m(A)(\ol
\delta)_0$ be idempotent matrices (which are homogeneous of degree
zero). Then $p$ and $q$ are {\it grade equivalent} if (after suitably
enlarging them) there are $x \in \M_k(A)[-\ol \alpha][-\ol \delta]$ and
$y \in \M_k(A)[-\ol \delta][-\ol \alpha]$ such that $xy=p$ and $yx=q$.
\end{deff}

\begin{lemma}\label{kzeiogr}\hfill
\begin{enumerate}[\upshape(1)]

\item Any graded finitely generated    projective module gives rise to a homogeneous idempotent matrix of degree zero.

\smallskip 

\item Any homogeneous idempotent matrix of degree zero gives rise to a graded finitely generated  projective module.

\smallskip 

\item Two  homogeneous idempotent matrices of degree zero are graded equivalent if and only if the corresponding graded finitely generated  projective modules are graded isomorphic.
\end{enumerate}
\end{lemma}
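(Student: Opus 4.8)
The plan is to use the graded ring isomorphism
\[
\End_A\big(A^n(-\ol\alpha)\big)\cong_{\gr}\M_n(A)(\ol\alpha)
\]
established via~(\ref{fgair}) and~(\ref{pjacko}) (apply the computation there to $V=A(-\alpha_1)\oplus\dots\oplus A(-\alpha_n)=A^n(-\ol\alpha)$), under which a degree-zero endomorphism of $A^n(-\ol\alpha)$ is left multiplication by a matrix in $\M_n(A)(\ol\alpha)_0$. For part~(1), if $P$ is graded finitely generated projective, then by \S\ref{pregr529} there is $Q$ with $P\oplus Q\cong_{\gr}A^n(\ol\beta)$; setting $\ol\alpha=-\ol\beta$, the projection $A^n(-\ol\alpha)\to A^n(-\ol\alpha)$ onto the summand $P$ is a graded (degree-zero) idempotent endomorphism, hence corresponds under the isomorphism above to a homogeneous idempotent $p\in\M_n(A)(\ol\alpha)_0$ with $P\cong_{\gr}p\,A^n(-\ol\alpha)$. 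For part~(2), conversely, given a homogeneous idempotent $p\in\M_n(A)(\ol\alpha)_0$, left multiplication by $p$ is a degree-zero idempotent endomorphism of $A^n(-\ol\alpha)$, and $A^n(-\ol\alpha)=p\,A^n(-\ol\alpha)\oplus(1-p)A^n(-\ol\alpha)$ exhibits $p\,A^n(-\ol\alpha)$ as a graded direct summand of a graded free module, so it is graded finitely generated projective.

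For part~(3) the key tool is the dictionary between maps and matrices from the proof of Proposition~\ref{rndconggrrna}: a degree-zero graded homomorphism $A^m(-\ol\delta)\to A^n(-\ol\alpha)$ is exactly left multiplication by a matrix in $\M_{n\times m}(A)[-\ol\alpha][-\ol\delta]$, and symmetrically in the other direction. After enlarging so that both idempotents lie in $\M_k(A)$ (which does not change the iso-class of the associated modules, since the extra summand $0\cdot A^\ell$ vanishes), set $P=p\,A^k(-\ol\alpha)$ and $Q=q\,A^k(-\ol\delta)$. For the direction ($\Leftarrow$), given a graded isomorphism $f\colon P\to Q$ with inverse $g$, let $y$ be the matrix of $A^k(-\ol\alpha)\xrightarrow{p}P\xrightarrow{f}Q\hookrightarrow A^k(-\ol\delta)$ and $x$ the matrix of $A^k(-\ol\delta)\xrightarrow{q}Q\xrightarrow{g}P\hookrightarrow A^k(-\ol\alpha)$; these lie in $\M_k(A)[-\ol\delta][-\ol\alpha]$ and $\M_k(A)[-\ol\alpha][-\ol\delta]$ respectively. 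Since $q$ acts as the identity on $Q$ and $gf=\id_P$, composing gives $xy=p$ and $yx=q$, so $p$ and $q$ are grade equivalent in the sense of Definition~\ref{equison}.

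For the direction ($\Rightarrow$), given $x\in\M_k(A)[-\ol\alpha][-\ol\delta]$ and $y\in\M_k(A)[-\ol\delta][-\ol\alpha]$ with $xy=p$ and $yx=q$, left multiplication by $y$ and by $x$ defines degree-zero graded homomorphisms $\hat y\colon A^k(-\ol\alpha)\to A^k(-\ol\delta)$ and $\hat x\colon A^k(-\ol\delta)\to A^k(-\ol\alpha)$. The identities $\hat y(pw)=y(xy)w=(yx)(yw)=q(yw)$ and $\hat x(qu)=x(yx)u=(xy)(xu)=p(xu)$ show that $\hat y$ restricts to $P\to Q$ and $\hat x$ to $Q\to P$, and since $xy=p$ acts as the identity on $P=\im p$ while $yx=q$ acts as the identity on $Q=\im q$, these restrictions are mutually inverse graded isomorphisms, giving $P\cong_{\gr}Q$. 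I expect the only real obstacle to be bookkeeping: keeping the suspension signs consistent throughout (the module attached to $p\in\M_n(A)(\ol\alpha)_0$ is $A^n(-\ol\alpha)$, not $A^n(\ol\alpha)$) and matching the index placement of $\M_{n\times m}(A)[\,\cdot\,][\,\cdot\,]$ with the degrees of the entries of $x$ and $y$; the conceptual content is the standard idempotent–projective correspondence carried out degreewise.
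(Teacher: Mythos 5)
Your proof is correct and follows essentially the same route as the paper: parts (1) and (2) use the identification $\End_A(A^n(-\ol\alpha))_0 \cong \M_n(A)(\ol\alpha)_0$ together with the splitting $A^n(-\ol\alpha)=pA^n(-\ol\alpha)\oplus(1-p)A^n(-\ol\alpha)$, and part (3) uses the same matrix--homomorphism dictionary from the proof of Proposition~\ref{rndconggrrna}. The only (immaterial) difference is that, in passing from graded equivalence to a graded isomorphism, the paper first normalizes the witnesses via $x=px'q$, $y=qy'p$ so that $x=px=xq$ and $y=yp=qy$, whereas you verify directly from $xy=p$, $yx=q$ that left multiplication restricts to mutually inverse maps $pA^k(-\ol\alpha)\leftrightarrow qA^k(-\ol\delta)$; both are the standard idempotent--projective correspondence carried out degreewise.
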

\begin{proof}
(1) Let $P$ be a graded finitely generated   projective (right) $A$-module. Then there is a graded module $Q$ such that $P\oplus
Q\cong_{\gr} A^n(-\ol \alpha)$ for some $n \in \mathbb N$ and $\ol
\alpha=(\alpha_1,\dots,\alpha_n)$, where $\alpha_i \in \Gamma$.
Define the homomorphism $f \in \End_A(A^n(-\ol \alpha))$ which sends
$Q$ to zero and acts as identity on $P$. Clearly, $f$ is an
idempotent and graded homomorphism of degree $0$. Thus (see~(\ref{hgd543p}) and \S\ref{matgrhe})
\[f \in \End_A(A^n(-\ol \alpha))_0\cong \M_n(A)(\ol \alpha)_0.\]

(2) Let $p\in \M_n(A)(\ol \alpha)_0$, $\ol
\alpha=(\alpha_1,\dots,\alpha_n)$, where $\alpha_i \in \Gamma$. Then
$1-p \in \M_n(A)(\ol \alpha)_0$ and
\[A^n(-\ol \alpha) = pA^n(-\ol \alpha) \oplus (1-p)A^n(-\ol \alpha).\] This shows that $pA^n(-\ol \alpha)$
is a graded finitely generated   projective $A$-module.

\smallskip 

(3) Let  $p\in \M_n(A)(\ol \alpha)_0$ and $q\in \M_m(A)(\ol
\delta)_0$ be graded equivalent idempotent matrices. By
Definition~\ref{equison}, there are $x' \in M_k(A)[-\ol \alpha][-\ol
\delta]$ and $y' \in M_k(A)[-\ol \delta][-\ol \alpha]$ such that
$x'y'=p$ and $y'x'=q$. Let $x=px'q$ and $y=qy'p$. Then
$xy=px'qy'p=p(x'y')^2p=p$ and similarly $yx=q$. Furthermore
$x=px=xq$ and $y=yp=qy$. Now the left multiplication by $x$ and $y$
induce right graded $A$-homomorphisms $qA^k(-\ol \delta)\rightarrow pA^k(-\ol \alpha)$ and 
$pA^k(-\ol \alpha)\rightarrow qA^k(-\ol
\delta)$, respectively, which are inverse of each other. Therefore $pA^k(-\ol
\alpha)\cong_{\gr} qA^k(-\ol \delta)$.

On the other hand if $f:pA^k(-\ol \alpha)\cong_{\gr} qA^k(-\ol
\delta)$, then extend $f$ to $A^k(-\ol \alpha)$ by sending $(1-p)A^k(-\ol
\alpha)$ to zero and thus define a map
\[\theta:A^k(-\ol \alpha)=pA^k(-\ol \alpha)\oplus (1-p)A^k(-\ol \alpha) \longrightarrow qA^k(-\ol \delta)\oplus (1-q)A^k(-\ol \delta)=A^k(-\ol \delta).\]
Similarly, extending $f^{-1}$ to $A^k(-\ol \delta)$, we get a map
\[\phi:A^k(-\ol \delta)=qA^k(-\ol \delta)\oplus (1-q)A^k(-\ol \delta) \longrightarrow pA^k(-\ol \alpha)\oplus (1-p)A^k(-\ol \alpha)=A^k(-\ol \alpha)\]
such that $\theta \phi=p$ and $\phi \theta=q$. It follows (see the proof of Proposition~\ref{rndconggrrna})
$\theta \in M_k(A)[-\ol \alpha][-\ol \delta]$ whereas  $\phi \in
M_k(A)[-\ol \delta][-\ol \alpha]$. This gives that $p$ and $q$ are
equivalent.
\end{proof}

Lemma~\ref{kzeiogr} shows that $K_0^{\gr}(A)$ can be defined as the group completion of the moniod of equivalent classes of homogeneous idempotent matrices of degree zero with addition, $[p]+[q]=\left(\begin{matrix}
p & 0\\ 0 & q \end{matrix}\right)$. In fact, this is the definition we adopt for $K_0^{\gr}$ when the graded ring $A$ does not have identity. Note that the action of $\Gamma$ on the idempotent matrices (so that $K^{\gr}_0(A)$ becomes a $\mathbb Z[\Gamma]$-module with this definition) is as follows: For a $\ga \in \Ga$ and $p\in \M_n(A)(\ol \alpha)_0$, $\ga p$ is represented by the same matrix as $p$ but considered in $\M_n(A)(\ol \alpha+\ga)_0$ where 
$\overline \alpha+\ga=(\alpha_1+\ga,\cdots, \alpha_n+\ga)$. A quick inspection of the proof of Lemma~\ref{kzeiogr} shows that the action of $\Gamma$ is compatible in both definitions of $K^{\gr}_0$. 

In the case of graded fields, one can compute the graded Grothendieck group completely. This will be used to compute $K^{\gr}_0$ of acyclic and comet graphs in Theorems~\ref{f4j5h6h8} and \ref{cothemp}. 

\begin{prop} \label{k0grof}
Let $A$ be a $\Gamma$-graded field with the support the subgroup $\Gamma_A$. Then 
the monoid of  isomorphism classes of $\Gamma$-graded finitely generated  projective $A$-modules is $\mathbb N[\Gamma /\Gamma_A]$
and  $K_0^{\gr}(A) \cong \mathbb Z[\Gamma /\Gamma_A]$ as a $\mathbb Z[\Gamma]$-modules. 
Furthermore, $[A^n(\delta_1,\dots,\delta_n)]\in K_0^{\gr}(A)$
corresponds to
 $\sum_{i=1}^n \underline \delta_i$ in $\mathbb Z[\Gamma/\Gamma_A]$, where 
 $\underline \delta_i=\Gamma_A+\delta_i$.
 In particular if $A$ is a (trivially graded) field, $\Gamma$ a group and $A$ considered as 
a graded field concentrated in
degree zero, then $K_0^{\gr}(A) \cong \mathbb Z[\Gamma]$ and  $[A^n(\delta_1,\dots,\delta_n)]\in K_0^{\gr}(A)$
corresponds to
 $\sum_{i=1}^n \delta_i$ in $\mathbb Z[\Gamma]$.
\end{prop}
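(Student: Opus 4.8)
The plan is to first pin down the monoid $\mathcal V^{\gr}(A)$ explicitly and then pass to its group completion. Since $A$ is a graded field, every $\Gamma$-graded $A$-module is graded free with a well-defined homogeneous dimension (\cite[Proposition~4.6.1]{grrings}); in particular a graded finitely generated projective module, being graded free and finitely generated, admits a finite homogeneous basis and is therefore graded isomorphic to some $A^n(\overline\delta)=A(\delta_1)\oplus\cdots\oplus A(\delta_n)$. I would accordingly define a map $\mathcal V^{\gr}(A)\to\mathbb N[\Gamma/\Gamma_A]$ sending $[A^n(\overline\delta)]$ to $\sum_{i=1}^n\underline{\delta_i}$, where $\underline{\delta_i}=\Gamma_A+\delta_i$, and prove it is a well-defined monoid isomorphism.

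The crux is well-definedness together with injectivity: the graded isomorphism class of $A^n(\overline\delta)$ must be recorded exactly by the multiset of cosets $\{\underline{\delta_i}\}$. The rank-one case follows from Corollary~\ref{rndcongcori}, since $A(\delta)\cong_{\gr}A(\delta')$ iff $A(\delta-\delta')\cong_{\gr}A$ iff $A$ has an invertible homogeneous element of degree $\delta-\delta'$; as every nonzero homogeneous element of a graded field is invertible, this occurs precisely when $\delta-\delta'\in\Gamma_A$, i.e.\ $\underline{\delta}=\underline{\delta'}$. For the general case I would count homogeneous components over $A_0$. Because $A_0$ is a field and, for $\beta\in\Gamma_A$, any two nonzero elements of $A_\beta$ differ by a unit of $A_0$, each $A_\beta$ is one-dimensional over $A_0$ (and $A_\beta=0$ for $\beta\notin\Gamma_A$). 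Hence $\dim_{A_0}\big(A^n(\overline\delta)\big)_\gamma=\#\{\,i:\gamma+\delta_i\in\Gamma_A\,\}=\#\{\,i:\underline{\delta_i}=-\underline{\gamma}\,\}$. A degree-$0$ graded isomorphism restricts to an $A_0$-isomorphism in each degree, hence preserves all these dimensions; as $\gamma$ ranges over $\Gamma$ the classes $-\underline{\gamma}$ exhaust $\Gamma/\Gamma_A$, so the coset-multiplicities of $\overline\delta$ are an isomorphism invariant, yielding injectivity. Surjectivity (choose one representative per coset) and additivity under $\oplus$ are then routine.

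Finally, the group completion of $\mathbb N[\Gamma/\Gamma_A]$ is $\mathbb Z[\Gamma/\Gamma_A]$, giving $K_0^{\gr}(A)\cong\mathbb Z[\Gamma/\Gamma_A]$ together with the asserted correspondence $[A^n(\overline\delta)]\mapsto\sum_i\underline{\delta_i}$. To confirm this is an isomorphism of $\mathbb Z[\Gamma]$-modules I would match the two actions: the suspension action carries $[A(\delta)]$ to $[A(\delta)(\gamma)]=[A(\delta+\gamma)]\mapsto\underline{\delta+\gamma}$, which is exactly $\underline{\gamma}\cdot\underline{\delta}$, the action of $\gamma\in\Gamma$ on $\mathbb Z[\Gamma/\Gamma_A]$ induced by the quotient $\Gamma\to\Gamma/\Gamma_A$. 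The stated special case is then immediate: for a trivially graded field $\Gamma_A=\{0\}$, so $\Gamma/\Gamma_A=\Gamma$ and $\sum_i\underline{\delta_i}=\sum_i\delta_i$. The main obstacle is the injectivity step, the ``linear independence'' of distinct cosets, which the homogeneous dimension count settles cleanly.
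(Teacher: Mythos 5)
Your proposal is correct and follows essentially the same route as the paper: reduce to graded freeness over a graded field, classify rank-one modules via invertible homogeneous elements (Proposition~\ref{rndconggrrna}/Corollary~\ref{rndcongcori}), and establish uniqueness of the coset data by an $A_0$-dimension count in individual degrees — your count of $\dim_{A_0}\big(A^n(\overline\delta)\big)_\gamma$ is exactly the paper's evaluation of $M_{-\delta_{i_1}}=A_0^{r_1}$, just phrased uniformly over all $\gamma$. The only difference is that you spell out the $\mathbb Z[\Gamma]$-equivariance of the completion map, which the paper leaves as ``the rest follows easily.''
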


\begin{proof}
By Proposition~\ref{rndconggrrna}, $A(\delta_1) \cong_{\gr} A(\delta_2)$ if and only if $\delta_1-\delta_2 \in \Gamma_A$. 
Thus any graded free module of rank $1$ is graded isomorphic to some $A(\delta_i)$, where $\{\delta_i\}_{i \in I}$ is a complete set of coset representative of the subgroup $\Gamma_A$ in $\Gamma$, i.e., $\{\Gamma_A+\delta_i, i\in I\}$ represents $\Gamma/\Gamma_A$. Since any graded finitely generated  module $M$ over $A$ is graded free (see~\S\ref{pregr529}),  
\begin{equation}\label{m528h}
M \cong_{\gr} A(\delta_{i_1})^{r_1} \oplus \dots \oplus A(\delta_{i_k})^{r_k},
\end{equation}
where $\delta_{i_1},\dots \delta_{i_k}$ are distinct elements of the coset representative. Now suppose 
\begin{equation}\label{m23gds}
M \cong_{\gr} A(\delta_{{i'}_1})^{{s}_1} \oplus \dots \oplus A(\delta_{{i'}_{k'}})^{s_{k'}}.
\end{equation}
Considering the $A_0$-module $M_{-\delta_{i_1}}$, from~(\ref{m528h}) we have $M_{-\delta_{i_1}}=A_0^{r_1}$. This implies one of $\delta_{{i'}_j}$, $1\leq j \leq k'$, say, $\delta_{{i'}_1}$, has to be $\delta_{i_1}$ and so $r_1=s_1$ as $A_0$ is a field.  Repeating  the same argument for each $\delta_{i_j}$, $1\leq j \leq k$, we see  
 $k=k'$, $\delta_{{i'}_j}=\delta_{i_j}$ and $r_j=s_j$, for all $1\leq j \leq k$ (possibly after suitable permutation).  Thus any graded finitely generated  projective $A$-module can be written uniquely as $M \cong_{\gr} A(\delta_{i_1})^{r_1} \oplus \dots \oplus A(\delta_{i_k})^{r_k}
$,  where $\delta_{i_1},\dots \delta_{i_k}$ are distinct elements of the coset representative. The (well-defined) map 
\begin{align}
\mathcal V^{\gr}(A) & \rightarrow \mathbb N[\Gamma/\Gamma_A] \label{attitude}\\
[A(\delta_{i_1})^{r_1} \oplus \dots \oplus A(\delta_{i_k})^{r_k}
] & \mapsto r_1(\Gamma_A+\de_{i_1})+\dots+ r_k(\Gamma_A+\de_{i_k}), \notag
\end{align}
gives a $\mathbb N[\Gamma]$-monoid isomorphism between the monoid of  isomorphism classes of $\Gamma$-graded finitely generated  projective $A$-modules  $\mathcal V^{\gr} (A)$ and  $\mathbb N[\Gamma /\Gamma_A]$. The rest of the proof follows easily. 
\end{proof}

\begin{example}\label{upst}
Using Proposition~\ref{k0grof}, we calculate the graded $K_0$ of two types of graded fields and we determine the action of $\mathbb Z[x,x^{-1}]$ on these groups. These are graded fields obtained from graded $K_0$ of Leavitt path algebras of acyclic and $C_n$-comet graphs, respectively (see Definitions~\ref{mulidef} and~\ref{cometi}). 
\begin{enumerate}
\item Let $K$ be a field. Consider $A=K$ as a $\mathbb Z$-graded field with the support $\Gamma_A=0$, i.e., $A$ is concentrated in degree $0$.  By Proposition~\ref{k0grof}, $K_0^{\gr}(A)\cong \mathbb Z[x,x^{-1}]$ as a $\mathbb Z[x,x^{-1}]$-module. 

\smallskip 

\item Let $A=K[x^n,x^{-n}]$ be $\mathbb Z$-graded field with $\Gamma_A=n\mathbb Z$. By Proposition~\ref{k0grof}, 
\[ K_0^{\gr}(A)\cong \mathbb Z \big [ \mathbb Z / n \mathbb Z \big ]\cong \bigoplus_n \mathbb Z,\] is a $\mathbb Z[x,x^{-1}]$-module. The action of $x$ on $(a_1,\dots,a_n) \in \bigoplus_n \mathbb Z$ is $x (a_1,\dots,a_n)= (a_n,a_1,\dots,a_{n-1})$. 
\end{enumerate}
\end{example}

\subsection{Dade's theorem}\label{dadestmal} 
Let $A$ be a strongly $\Gamma$-graded ring. (This implies $\Gamma_A=\Gamma$.) By Dade's Theorem \cite[Thm.~3.1.1]{grrings}, the functor $(-)_0:\mbox{gr-}A\rightarrow \mbox{mod-}A_0$, $M\mapsto M_0$, is an additive functor with an  inverse $-\otimes_{A_0} A: \mbox{mod-}A_0 \rightarrow \mbox{gr-}A$ so that it induces an equivalence of categories.  
 This implies
that $K_i^{\gr}(A)  \cong K_i(A_0)$, for $i\geq 0$.  Furthermore, since $A_\alpha \otimes_{A_0}A_\beta \cong A_{\alpha +\beta}$ as $A_0$-bimodule, the functor $\mathcal T_\alpha$ on $\mbox{gr-}A$ induces a functor on the level of
$\mbox{mod-}A_0$, $\mathcal T_\alpha:\mbox{mod-}A_0 \rightarrow \mbox{mod-}A_0$, $M\mapsto  M \otimes_{A_0} A_\alpha$ such that $\mathcal T_\alpha \mathcal T_\beta\cong\mathcal T_{\alpha +\beta}$, $\alpha,\beta\in \Gamma$, so that the following diagram is commutative. 
\begin{equation}\label{veronaair}
\xymatrix{
\mbox{gr-}A \ar[r]^{\mathcal T_\alpha} \ar[d]_{(-)_0}& \mbox{gr-}A \ar[d]^{(-)_0}\\
\mbox{mod-}A_0 \ar[r] & \mbox{mod-}A_0
}
\end{equation}
Therefore $K_i(A_0)$ is also a $\mathbb Z[\Gamma]$-module and
\begin{equation} \label{dade}
K_i^{\gr}(A) \cong K_i(A_0),
\end{equation} as $\mathbb Z[\Gamma]$-modules.

\subsection{} \label{kidenh}

Let $A$ be a graded ring and $f:A\rightarrow A$ an inner-automorphism defined by $f(a)=r a r^{-1}$, where $r$ is a homogeneous
and invertible element of $A$ of degree $\tau$. Then considering
$A$ as a graded left $A$-module via $f$, it is easy to observe that there is a   
right graded $A$-module isomorphism $P(-\tau) \rightarrow P \otimes_f A$, $p
\mapsto p \otimes r$ (with the inverse $p\otimes a \mapsto p r^{-1}a $). 
This induces an isomorphism between the functors $-\otimes_f A :\Pgr(A) \rightarrow \Pgr(A)$ and 
$\tau$-suspension functor $\mathcal T_\tau:\Pgr(A) \rightarrow \Pgr(A)$. Recall that a Quillen's $K_i$-group, $i\geq 0$,  is a functor from the category of exact categories with exact functors to the category of abelian groups. Furthermore, isomorphic functors induce the same map on the $K$-groups \cite[p.19]{quillen}. Thus $K^{\gr}_i(f)=K^{\gr}_i(\mathcal T_\tau)$. Therefore if $r$ is a homogeneous element of degree zero, i.e., $\tau=0$, then 
$K^{\gr}_i(f):K^{\gr}_i(A)\rightarrow K^{\gr}_i(A)$ is the identity map. This will be used in Theorem~\ref{mani543}. 

\subsection{Homogeneous idempotents}\label{idemptis}

The following facts about idempotents are well known in a non-graded setting and one can check that they translate in the graded setting with the similar proofs (cf. \cite[\S 21]{lamfc}). Let $P_i$, $1\leq i \leq l$,  be homogeneous right ideals of $A$ such that $A=P_1\oplus\dots \oplus P_l$. (Recall that a homogeneous ideal, or a graded ideal, is an ideal which is generated by homogeneous elements.) Then there are homogeneous orthogonal idempotents $e_i$  (hence of degree zero) such that $1=e_1+\dots+e_l$ and $e_iA=P_i$. 

Let $e$ and $f$ be  homogeneous idempotent elements in the graded ring $A$. (Note that, in general, there are non-homogeneous idempotents in a graded ring.) Let $\theta:eA \rightarrow fA$ be a graded $A$-module homomorphism. 
Then $\theta(e)=\theta(e^2)=\theta(e)e=fae$ for some $a\in A$ and for $b \in eA$, $\theta(b)=\theta(eb)=\theta(e)b$. 
This shows there is a map 
\begin{align}\label{hgwob}
\Hom_A(eA,fA) & \rightarrow fAe\\
\theta & \mapsto \theta(e) \notag
\end{align}
and one can prove this is a graded group isomorphism. Now if $\theta:eA\rightarrow fA$ is a graded $A$-isomorphism, then $x=\theta(e)\in fAe$ and  $y=\theta^{-1}(f)\in eAf$, where $x$ and $y$ are homogeneous of degree zero, such that $yx=e$ and $xy=f$.  
Finally, for $f=e$, the map~(\ref{hgwob}) gives a graded ring isomorphism $ \End_A(eA) \cong_{\gr} eAe$.  
These facts will be used in Theorem~\ref{catgrhsf}. 
 
\subsection{$\mathbf \Gamma$-pre-ordered groups}  \label{pregg5}
Here we define the category of $\Gamma$-pre-ordered abelian groups. Classically, the objects are abelian groups with a pre-ordering. However, since we will consider graded Grothendieck groups, our groups are $\mathbb Z[x,x^{-1}]$-modules, so we need to adopt the definitions to this setting. 

Let $\Gamma$ be a group and $G$ be a (left) $\Gamma$-module. Let $\leq$ be a reflexive and transitive relation on $G$ which respects the group and the module structures, i.e., for $\gamma \in \Gamma$ and $x,y,z \in G$, if $x\leq y$, then $\gamma x \leq \gamma y$ and $x+z\leq y+z$. We call $G$ a {\it $\Gamma$-pre-ordered group}. We call $G$ a pre-ordered group when $\Gamma$ is clear from the context. The {\it cone} of $G$ is defined as $\{x \in G \mid x\geq 0\}$ and denoted by $G^{+}$.  The set $G^{+}$ is a $\Gamma$-submonoid of $G$, i.e., a submonoid which is closed under the action of $\Gamma$.  In fact, $G$ is a $\Gamma$-pre-ordered group if and only if there exists a $\Gamma$-submonoid of $G$. (Since $G$ is a $\Gamma$-module, it can be  considered as a $\mathbb Z[\Gamma]$-module.)  An {\it order-unit} in $G$ is an element $u \in G^{+}$ such that for any $x\in G$, there are $\alpha_1,\dots,\alpha_n \in \Gamma$, $n \in \mathbb N$, such that $x\leq \sum_{i=1}^n \alpha_i  u$.  As usual, we only consider homomorphisms which preserve the pre-ordering, i.e., a $\Gamma$-homomorphism $f:G\rightarrow H$, such that $f(G^{+}) \subseteq H^{+}$. We denote by $\mathcal P_{\Gamma}$ the category of pointed $\Gamma$-pre-ordered abelian groups with pointed ordered preserving homomorphisms, i.e., the objects are the pairs  $(G,u)$, where $G$ is a  $\Gamma$-pre-ordered abelian group and $u$ is an order-unit, and $f:(G,u)\rightarrow (H,v)$ is an ordered-preserving $\Gamma$-homomorphism such that $f(u)=v$. Note that when $\Gamma$ is a trivial group, we are in the classical setting of pre-ordered abelian groups. In this paper, $\Gamma=\mathbb Z$ and so $\mathbb Z[\Gamma]=\mathbb Z[x,x^{-1}]$. We simply write $\mathcal P$ for $\mathcal P_{\mathbb Z}$ throughout. 

\begin{example}
Let $A$ be a $\Gamma$-graded ring. Then $K^{\gr}_0(A)$ is a $\Gamma$-pre-ordered abelian group with  the set of isomorphic classes of  graded finitely generated  projective right $A$-modules  as the cone of ordering and $[A]$ as an order-unit. Indeed, if $x\in K^{\gr}_0(A)$, then there are graded finitely generated  projective modules $P$ and $P'$ such that $x=[P]-[P']$. But there is a graded module $Q$ such that $P\oplus Q \cong A^n(\overline \alpha)$, where $\overline \alpha =(\alpha_1,\dots,\alpha_n)$, $\alpha_i \in \Gamma$ (see \S\ref{pregr529}).  Now 
\[ [A^n(\overline \alpha)] -x = [P]+[Q]-[P]+[P']=[Q]+[P']=[Q\oplus P'] \in K^{\gr}_0(A)^{+}.\] This shows that $\sum_{i=1}^n \alpha_i [A]=[A^n(\overline \alpha)]\geq x$. 

\end{example}

\subsection{Graded matricial algebras}\label{grmat43}

The Leavitt path algebras of acyclic and $C_n$-comet graphs are  finite direct product of matrix rings over graded fields of the form $K$ and $K[x^n,x^{-n}]$, respectively (see~\cite[Theorem~4.11 and Theorem~4.17]{haz}). In this section we establish that the graded $K_0$ is a complete invariant for these algebras. This will be used in the main Theorem~\ref{mani543} to settle Conjecture~\ref{weakconj} in the case of acyclic and multi-headed comet graphs. 

\begin{deff}
Let  $A$ be a $\Gamma$-graded field. A $\Gamma$-{\it graded matricial} $A$-algebra is a graded $A$-algebra of the form \[\M_{n_1}(A)(\overline \delta_1) \times \dots \times   \M_{n_l}(A)(\overline \delta_l),\] where $\overline \delta_i=(\delta_{1}^{(i)},\dots,\delta_{n_i}^{(i)})$, $\delta_{j}^{(i)} \in \Gamma$, $1\leq j \leq n_i$ and $1\leq i \leq l$. 
\end{deff}

\begin{lemma}\label{fdpahf}
Let $A$ be a $\Gamma$-graded field and $R$ be a $\Gamma$-graded matricial $A$-algebra.  Let $P$ and $Q$ be graded finitely generated  projective $R$-modules. Then $[P]=[Q]$ in $K^{\gr}_0(R)$, if and only if  $P \cong _{\gr} Q$. 
\end{lemma}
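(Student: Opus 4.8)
The plan is to prove the nontrivial implication $[P]=[Q]\Rightarrow P\cong_{\gr}Q$ (the reverse being immediate from the definition of $K^{\gr}_0$), and the strategy is to show that the natural map from the monoid $\mathcal V^{\gr}(R)$ to its group completion $K^{\gr}_0(R)$ is injective. This is precisely the assertion that $\mathcal V^{\gr}(R)$ is cancellative, since in a cancellative commutative monoid distinct elements remain distinct after group completion. Thus everything reduces to identifying $\mathcal V^{\gr}(R)$ explicitly and checking it is cancellative.

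First I would reduce to the case of a single matrix factor. Writing $R=R_1\times\dots\times R_l$ with $R_i=\M_{n_i}(A)(\overline\delta_i)$, the central homogeneous idempotents of degree zero $e_i=1_{R_i}$ split every graded finitely generated projective module $P$ as $P=\bigoplus_i Pe_i$ with $Pe_i\in\Pgr(R_i)$, yielding an equivalence $\Pgr(R)\simeq\prod_i\Pgr(R_i)$ and hence isomorphisms $\mathcal V^{\gr}(R)\cong\prod_i\mathcal V^{\gr}(R_i)$ and $K^{\gr}_0(R)\cong\bigoplus_iK^{\gr}_0(R_i)$. Since a product of cancellative monoids is cancellative, and both graded isomorphism and equality of classes are tested factorwise, it suffices to treat $R=\M_n(A)(\overline\delta)$.

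Next I would apply graded Morita equivalence (Proposition~\ref{grmorita}): the functors $\psi,\va$ between $\Pgr(\M_n(A)(\overline\delta))$ and $\Pgr(A)$ are mutually inverse equivalences. An equivalence of categories carries graded isomorphisms to graded isomorphisms and induces a monoid isomorphism $\mathcal V^{\gr}(\M_n(A)(\overline\delta))\cong\mathcal V^{\gr}(A)$ compatible with the passage to group completions. Hence it is enough to prove the lemma for $R=A$, a graded field.

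For a graded field the statement follows directly from Proposition~\ref{k0grof}: the monoid $\mathcal V^{\gr}(A)$ is isomorphic to $\mathbb N[\Gamma/\Gamma_A]$, the free commutative monoid on the set $\Gamma/\Gamma_A$. A free commutative monoid is cancellative, so the group completion map $\mathcal V^{\gr}(A)\to K^{\gr}_0(A)\cong\mathbb Z[\Gamma/\Gamma_A]$ is injective, and therefore $[P]=[Q]$ in $K^{\gr}_0(A)$ forces $[P]=[Q]$ in $\mathcal V^{\gr}(A)$, i.e. $P\cong_{\gr}Q$. Concretely, by Proposition~\ref{k0grof} each module admits a unique decomposition $A(\delta_{i_1})^{r_1}\oplus\dots\oplus A(\delta_{i_k})^{r_k}$ into distinct coset shifts, and equality of the associated elements of $\mathbb Z[\Gamma/\Gamma_A]$ forces equality of the multiplicities $r_j$, which is the uniqueness we need. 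The only genuine content is this cancellation, which is where the explicit computation of $\mathcal V^{\gr}$ of a graded field as a free commutative monoid does all the work; the two reduction steps are purely formal.
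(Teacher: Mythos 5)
Your proposal is correct and follows essentially the same route as the paper's proof: reduce to a single factor $\M_n(A)(\overline\delta)$ using that $K_0^{\gr}$ respects direct sums, pass to the graded field $A$ via the graded Morita equivalence of Proposition~\ref{grmorita}, and conclude from the structure of graded finitely generated projective modules over a graded field given in Proposition~\ref{k0grof}. Your phrasing of the last step as cancellativity of the free commutative monoid $\mathbb N[\Gamma/\Gamma_A]$ (hence injectivity of the group completion map) is just a repackaging of the paper's appeal to the unique decomposition into shifted copies of $A$ and the isomorphism~(\ref{attitude}).
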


\begin{proof}
Since the functor $K^{\gr}_0$ respects the direct sum, it suffices to prove the statement for a matricial algebra of the form 
$R=\M_{n}(A)(\overline {\delta})$. Let $P$ and $Q$ be graded finitely generated  projective $R$-modules such that $[P]=[Q]$ in $K^{\gr}_0(R)$. By Proposition~\ref{grmorita},  $R=\M_{n}(A)(\overline {\delta})$ is graded Morita equivalent to $A$. So there are equivalent functors $\psi$ and $\phi$ such that $\psi\phi\cong 1$ and $\phi \psi \cong 1$, which also induce an isomorphism $K^{\gr}_0(\psi):K^{\gr}_0(R) \rightarrow K^{\gr}_0(A)$ such that $[P]\mapsto [\psi(P)]$. Now since $[P]=[Q]$, it follows $[\psi(P)]=[\psi(Q)]$  in $K^{\gr}_0(A)$. But since $A$ is a graded field, by the proof of Proposition~\ref{k0grof}, any graded finitely generated  projective $A$-module can be written uniquely 
as a direct sum of appropriate shifted $A$. Writing $\psi(P)$ and $\psi(Q)$ in this form, the homomorphism~(\ref{attitude}) shows that $\psi(P)\cong_{\gr} \psi(Q)$. Now applying the functor $\phi$ to this we obtain $P\cong_{\gr} Q$.
\end{proof}

Let $A$ be a $\Gamma$-graded field (with the support $\Gamma_A$) and $\mathcal C$ be a category consisting of $\Gamma$-graded matricial $A$-algebras as objects and  
$A$-graded algebra homomorphisms  as morphisms. We consider the quotient category $\mathcal C^{\out}$ obtained from $\mathcal C$ by identifying homomorphisms which differ up to a degree zero graded inner-automorphim. That is, the graded homomorphisms $\phi, \psi \in \Hom_{\mathcal C}(R,S)$ represent the same morphism in $\mathcal C^{\out}$ if there is an inner-automorpshim $\theta:S\rightarrow S$, defined by $\theta(s)=xsx^{-1}$, where $\deg(x)=0$, such that $\phi=\theta\psi$. The following theorem shows that $K^{\gr}_0$ ``classifies'' the category of $\mathcal C^{\out}$. This will be used in Theorem~\ref{mani543} where Leavitt path algebras of acyclic and multi-headed comet graphs are classified by means of  their $K$-groups. This is a graded analog of a similar theorem for matricial algebras (see~\cite{goodearlbook}).

\begin{theorem}\label{catgrhsf}
Let $A$ be a $\Gamma$-graded field and $\mathcal C^{\out}$ be the category consisting of $\Gamma$-graded matricial $A$-algebras as objects and  
$A$-graded algebra homomorphisms modulo graded inner-automorphisms as morphisms.  Then $K^{\gr}_0: \mathcal C^{\out}\rightarrow \mathcal P$ is a fully faithful functor. Namely,

\begin{enumerate}[\upshape(1)]

\item (well-defined and faithful) For any graded matricial $A$-algebras $R$ and $S$ and $\phi,\psi\in \Hom_{\mathcal C}(R, S)$,  we have  $\phi(r)=x\psi(r)x^{-1}$, $r\in R$, for some invertible homogeneous element $x$ of $S$, if and only if $K^{\gr}_0(\phi)=K^{\gr}_0(\psi)$. 

\smallskip

\item(full) For any graded matricial $A$-algebras $R$ and $S$ and  the morphism $f:(K^{\gr}_0(R),[R])\rightarrow (K^{\gr}_0(S),[S])$ in $\mathcal P$, there is $\phi\in \Hom_{\mathcal C}(R, S)$ such that $K^{\gr}_0(\phi)=f$.

\smallskip

\end{enumerate} 

\end{theorem}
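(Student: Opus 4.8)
The plan is to prove the two parts separately, leaning on graded Morita equivalence (Proposition~\ref{grmorita}) to reduce computations to the underlying graded field $A$, and on the homogeneous-idempotent machinery of \S\ref{idemptis} and \S\ref{kidenh} to translate statements about $K^{\gr}_0$-maps back into ring homomorphisms. Throughout I would use Lemma~\ref{fdpahf}, which says that for graded matricial algebras $[P]=[Q]$ in $K^{\gr}_0$ is equivalent to $P\cong_{\gr}Q$; this is the crucial bridge letting me recover isomorphisms of projective modules from equalities in the $K$-group.

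For part (1), the ``only if'' direction is immediate from \S\ref{kidenh}: if $\phi(r)=x\psi(r)x^{-1}$ with $x$ homogeneous invertible, then $\phi$ and $\psi$ differ by an inner automorphism induced by a \emph{degree-zero} element (morphisms in $\mathcal C$ are graded of degree $0$, forcing $x$ homogeneous of degree $0$ up to the ambiguity that does not affect $K$-theory), and the argument there shows $K^{\gr}_0(\phi)=K^{\gr}_0(\psi)$. For the ``if'' direction I would argue as follows. Restrict attention to $R=\M_n(A)(\overline\delta)$ and decompose $R=\bigoplus_j e_{jj}R$ into homogeneous indecomposable projectives; applying $\phi$ and $\psi$ gives two homogeneous decompositions of $S$ as a right $R$-module via the respective actions. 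Since $K^{\gr}_0(\phi)=K^{\gr}_0(\psi)$, the classes $[\phi(e_{jj})S]$ and $[\psi(e_{jj})S]$ agree in $K^{\gr}_0(S)$, so by Lemma~\ref{fdpahf} the corresponding graded projectives are graded isomorphic. Assembling these component isomorphisms produces a graded $S$-module automorphism intertwining the two $R$-module structures; by the isomorphism $\End_S(S)\cong_{\gr}S$ and the idempotent-conjugacy facts in \S\ref{idemptis}, this intertwiner is realized by conjugation by a homogeneous invertible element $x\in S$, giving $\phi(r)=x\psi(r)x^{-1}$.

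For part (2), given an order-unit-preserving $\mathbb Z[x,x^{-1}]$-module map $f:(K^{\gr}_0(R),[R])\to(K^{\gr}_0(S),[S])$, I want to construct a graded algebra homomorphism inducing it. Again reduce to $R=\M_n(A)(\overline\delta)$, where $K^{\gr}_0(R)\cong\mathbb Z[\Gamma/\Gamma_A]$ by Proposition~\ref{k0grof} and graded Morita equivalence, and where $[R]=\sum_j[e_{jj}R]$ expresses the order-unit as a sum of generators. The images $f([e_{jj}R])$ lie in the cone $K^{\gr}_0(S)^+$, so by Lemma~\ref{fdpahf} each is the class of an honest graded finitely generated projective $S$-module $P_j$, and the relation $f([R])=[S]$ forces $\bigoplus_j P_j\cong_{\gr}S$ as graded right $S$-modules. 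This graded isomorphism yields a complete system of homogeneous orthogonal idempotents in $S$ matching the matrix-unit data of $R$, out of which one builds the required homomorphism $\phi:R\to S$ sending the graded matrix units of $R$ to the constructed homogeneous elements; tracking degrees via~(\ref{hogr}) ensures $\phi$ is graded, and $A$-linearity is imposed by sending the copy of $A$ compatibly. One then checks $K^{\gr}_0(\phi)=f$ on the generators $[e_{jj}R]$, and since these generate $K^{\gr}_0(R)$ as a $\mathbb Z[\Gamma]$-module and both maps are $\mathbb Z[\Gamma]$-linear, equality holds everywhere.

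The main obstacle I expect is the bookkeeping in part (2): turning the abstract module isomorphism $\bigoplus_j P_j\cong_{\gr}S$ into a genuinely \emph{graded} and \emph{$A$-algebra} homomorphism of matricial algebras, with all the degree shifts $\overline\delta_i$ and coset representatives of $\Gamma/\Gamma_A$ tracked correctly so that the matrix units map to homogeneous elements of the right degrees. The faithfulness step in part (1) also requires care in passing from ``isomorphic as $R$-modules via two structures'' to ``conjugate by a single homogeneous invertible $x$,'' since one must verify that the pieced-together intertwiner is itself homogeneous of degree zero; this is where the graded version of the standard fact $\End_S(S)\cong_{\gr}S$ and the translation between graded module isomorphisms and conjugacy of idempotents (\S\ref{idemptis}) do the essential work.
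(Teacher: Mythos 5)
Your overall route is the same as the paper's: both parts hinge on Lemma~\ref{fdpahf} to convert equalities in $K^{\gr}_0$ into graded isomorphisms of projectives, on the idempotent dictionary of \S\ref{idemptis}, and on \S\ref{kidenh} for well-definedness. However, at the two points where the real work happens, your sketch asserts things that are not true as stated. In part (1), the step ``assembling these component isomorphisms produces a graded $S$-module automorphism intertwining the two $R$-module structures'' fails: if you choose, independently for each $j$, a graded isomorphism $\theta_j:\psi(e_{jj})S\rightarrow \phi(e_{jj})S$ and sum them, the resulting automorphism of $S$ is left multiplication by an invertible degree-zero element $x$ which conjugates each $\psi(e_{jj})$ to $\phi(e_{jj})$, but nothing forces $x\psi(e_{jk})x^{-1}=\phi(e_{jk})$ for $j\neq k$. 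Already for $R=S=\M_2(K)$ trivially graded, $\phi=\psi=\id$, $\theta_1=\id$ and $\theta_2$ multiplication by a scalar $c\neq 1$, one gets $x=\operatorname{diag}(1,c)$ and $xe_{12}x^{-1}=c^{-1}e_{12}\neq e_{12}$, so $x$ does not intertwine the two $R$-structures. The paper avoids exactly this: it uses \emph{only} the isomorphism between the $(1,1)$-corners, producing degree-zero $x_i,y_i$ with $x_iy_i=\phi(e^{(i)}_{11})$ and $y_ix_i=\psi(e^{(i)}_{11})$, and then defines $x=\sum_{i,j}\phi(e^{(i)}_{j1})\,x_i\,\psi(e^{(i)}_{1j})$; because $x$ is built out of the matrix units themselves, the relations $x\psi(e^{(i)}_{jk})=\phi(e^{(i)}_{jk})x$ hold automatically. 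A further point: your parenthetical claim that gradedness of $\phi,\psi$ forces the conjugating element to have degree zero is false---conjugation by a homogeneous element of \emph{any} degree is degree-preserving---and by \S\ref{kidenh} such a conjugation acts on $K^{\gr}_0$ as a suspension, which for degrees outside $\Gamma_A$ need not be the identity; the statement one can actually prove (and what the paper's proof gives) concerns conjugation by an invertible homogeneous element of degree zero.

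In part (2) the analogous gap occurs: the isomorphism $\bigoplus_j P_j\cong_{\gr}S$ yields a complete system of orthogonal homogeneous idempotents, but \emph{not} matrix units. To define the off-diagonal elements $g_{jk}$ you need graded isomorphisms between the summands up to the correct suspensions, and this is where the $\mathbb Z[\Gamma]$-linearity of $f$ must enter the construction itself, not merely the final verification: since $[e_{jj}R]=(\delta_1-\delta_j)[e_{11}R]$ in $K^{\gr}_0(R)$, linearity gives $f([e_{jj}R])=(\delta_1-\delta_j)f([e_{11}R])=[Q(\delta_1-\delta_j)]$ for a \emph{single} projective $Q$, and Lemma~\ref{fdpahf} then upgrades the idempotent decomposition to $g_iS\cong_{\gr}\bigoplus_k Q(\delta^{(i)}_1-\delta^{(i)}_k)$; the matrix units $g^{(i)}_{jk}$ are precisely the endomorphisms permuting these mutually suspension-isomorphic summands, and their degrees come out right because all summands are shifts of the same $Q$. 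You relegate this to ``bookkeeping,'' but without identifying the $P_j$ as suspensions of one module there are no matrix units and hence no homomorphism $\phi$ at all; this is the mathematical heart of the paper's proof of fullness. With these two repairs your argument coincides with the paper's proof.
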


\begin{proof}
(1) (well-defined.) Let $\phi,\psi\in \Hom_{\mathcal C}(R, S)$  such that $\phi=\theta\psi$, where $\theta(s)=xsx^{-1}$ for some invertible homogeneous element $x$ of $S$ of degree $0$. Then $K^{\gr}_0(\phi)=K^{\gr}_0(\theta\psi)=K^{\gr}_0(\theta)K^{\gr}_0(\psi)=K^{\gr}_0(\psi)$ since $K^{\gr}_0(\theta)$ is the identity map (see \S\ref{kidenh}). 

(faithful.) The rest of the proof is similar to the non-graded version with an extra attension given to the grading (cf. \cite[p.218]{goodearlbook}). We give it here for the completeness. Let $K^{\gr}_0(\phi)=K^{\gr}_0(\psi)$. Let $R=\M_{n_1}(A)(\overline \delta_1) \times \dots \times   \M_{n_l}(A)(\overline \delta_l)$ and set $g_{jk}^{(i)}=\phi(e^{(i)}_{jk})$ and 
$h_{jk}^{(i)}=\psi(e^{(i)}_{jk})$ for  $1\leq i \leq l$ and $1\leq j,k \leq n_i$, where $e^{(i)}_{jk}$ are the standard basis for $\M_{n_i}(A)$. Since $\phi$ and $\psi$ are graded homomorphism, 
$\deg(e^{(i)}_{jk})=\deg(g^{(i)}_{jk})=\deg(h^{(i)}_{jk})=\delta^{(i)}_j-\delta^{(i)}_k$. 

Since $e^{(i)}_{jj}$ are pairwise graded orthogonal idempotents (of degree zero)  in $R$  and  
$\sum_{i=1}^l \sum_{j=1}^{n_i} e^{(i)}_{jj}=1$,  the same is also the case for $g^{(i)}_{jj}$ and $h^{(i)}_{jj}$. 
Then 
\[[g^{(i)}_{11}S]=K^{\gr}_0(\phi)([e^{(i)}_{11}R])=K^{\gr}_0(\psi)([e^{(i)}_{11}R])=[h^{(i)}_{11}S].\]
By Lemma~\ref{fdpahf}, $g^{(i)}_{11}S \cong_{\gr} h^{(i)}_{11}S$. Thus there are homogeneous elements of degree zero $x_i$ and $y_i$ such that $x_iy_i=g^{(i)}_{11}$ and $y_ix_i=h^{(i)}_{11}$ (see \S\ref{idemptis}). Let 
$x=\sum_{i=1}^{l}\sum_{j=1}^{n_i}g^{(i)}_{j1}x_ih^{(i)}_{1j}$ and 
$y=\sum_{i=1}^{l}\sum_{j=1}^{n_i}h^{(i)}_{j1}y_ig^{(i)}_{1j}$. Note that $x$ and $y$ are homogeneous elements of degree zero. One can easily check that $xy=yx=1$. Now for  $1\leq i \leq l$ and $1\leq j,k \leq n_i$ we have 
\begin{align*}
x h^{(i)}_{jk}&=\sum_{s=1}^{l}\sum_{t=1}^{n_s}g^{(s)}_{t1}x_sh^{(s)}_{1t}h^{(i)}_{jk}\\
& = g^{(i)}_{j1}x_ih^{(i)}_{1j}h^{(i)}_{jk}=g^{(i)}_{jk}g^{(i)}_{k1}x_ih^{(i)}_{1k}\\
&= \sum_{s=1}^{l}\sum_{t=1}^{n_s}g^{(i)}_{jk}g^{(s)}_{t1}x_sh^{(i)}_{1t}= g^{(i)}_{jk}x
\end{align*}
Let $\theta:S\rightarrow S$ be the graded inner-automorphism $\theta(s)=xsy$. Then $\theta\psi(e^{(i)}_{jk}))=
xh^{(i)}_{jk}y=g^{(i)}_{jk}=\phi(e^{(i)}_{jk})$. Since $e^{(i)}_{jk}$, $1\leq i \leq l$ and $1\leq j,k \leq n_i$, 
form a homogeneous $A$-basis for $R$,  $\theta\psi=\phi$. 

\smallskip

(2) Let $R=\M_{n_1}(A)(\overline \delta_1) \times \dots \times   \M_{n_l}(A)(\overline \delta_l).$
Consider $R_i= \M_{n_i}(A)(\overline \delta_i)$, $1\leq i \leq l$. 
Each $R_i$ is a graded finitely generated   projective $R$-module, so  $f([R_i])$ is in the cone of $K^{\gr}_0(S)$, i.e., there is a graded finitely generated  projective $S$-module $P_i$ such that $f([R_i])=[P_i]$. Then 
\[[S]=f([R])=f([R_1]+\dots+[R_l])=[P_1]+\dots+[P_l]=[P_1\oplus\dots\oplus P_l].\] Since $S$ is a graded matricial algebra, by Lemma~\ref{fdpahf}, $P_1\oplus\dots\oplus P_l \cong_{\gr} S$ as a right $S$-module. So there are homogeneous orthogonal idempotents $g_1,\dots,g_l \in S$ such that $g_1+\dots+g_l=1$ and $g_iS\cong_{\gr} P_i$ (see \S\ref{idemptis}). 
Note that each of $R_i=\M_{n_i}(A)(\overline \delta_i)$ are graded simple algebras. Set $\overline \delta_i=(\delta_1^{(i)},\dots,\delta_n^{(i)})$ (here $n=n_i$). Let $e_{jk}^{(i)}$,$1\leq j,k\leq n$, be matrix units of $R_i$ and consider 
the graded finitely generated   projective (right) $R_i$-module, $V=e_{11}^{(i)} R_i=A(\de_1^{(i)} - \de_1^{(i)}) \oplus  A(\de_2^{(i)}  - \de_1^{(i)}) \oplus \cdots \oplus
A(\de_n^{(i)} - \de_1^{(i)})$. Then~(\ref{pjacko}) shows \[R_i \cong_{\gr} V(\de_1^{(i)} - \de_1^{(i)})\oplus V(\de_1^{(i)}  - \de_2^{(i)}) \oplus \dots \oplus V(\de_1^{(i)} - \de_n^{(i)}),\] as graded $R$-module. Thus 
\begin{equation}\label{lkheohg}
[P_i]=[g_iS]=f([R_i])=f([V(\de_1^{(i)} - \de_1^{(i)})])+f([V(\de_1^{(i)} - \de_2^{(i)})])+\dots+f([V(\de_1^{(i)} - \de_n^{(i)})]).
\end{equation}
There is a graded finitely generated  projective $S$-module $Q$ such that $f([V])=f([V(\de_1^{(i)} - \de_1^{(i)})])=[Q]$. Since $f$ is a $\mathbb Z[\Gamma]$-module homomorphism, for $1\leq k \leq n$, \[f([V(\de_1^{(i)} - \de_k^{(i)})])=f((\de_1^{(i)} - \de_k^{(i)})[V])=(\de_1^{(i)} - \de_k^{(i)})f([V])=(\de_1^{(i)} - \de_k^{(i)})[Q]=[Q(\de_1^{(i)} - \de_k^{(i)})].\] 
From ~(\ref{lkheohg}) and Lemma~\ref{fdpahf} now follows 
\begin{equation}\label{bvgdks}
g_iS\cong_{\gr} Q(\de_1^{(i)} - \de_1^{(i)})\oplus Q(\de_1^{(i)}  - \de_2^{(i)}) \oplus \dots \oplus Q(\de_1^{(i)} - \de_n^{(i)}).
\end{equation}
Let $g^{(i)}_{jk}\in \End(g_iS)\cong_{\gr} g_iSg_i$  maps the $j$-th summand of the right hand side of~(\ref{bvgdks}) to its $k$-th summand and everything else to zero. Observe that $\deg(g^{(i)}_{jk})=\de^{(i)}_j-\de^{(i)}_k$ and $g^{(i)}_{jk}$, $1\leq j,k\leq n$, form  the matrix units. Furthermore, $g^{(i)}_{11}+\dots+g^{(i)}_{nn}=g_i$ and $g^{(i)}_{11}S=Q(\delta^{(i)}_1-\delta^{(i)}_1)=Q$. Thus $[g^{(i)}_{11}S]=[Q]=f([V])=f([e_{11}^{(i)}R_i])$.

Now for $1\leq i\leq l$, define the $A$-algebra homomorphism $R_i\rightarrow g_iSg_i$, $e_{jk}^{(i)} \mapsto g_{jk}^{(i)}$. This is a graded homomorphism, and induces a graded homorphism $\phi:R\rightarrow S$ such that 
$\phi(e_{jk}^{(i)})= g_{jk}^{(i)}$. Clearly \[K_0^{\gr}(\phi)([e_{11}^{(i)}R_i])=[\phi(e_{11}^{(i)})S]=[g_{11}^{(i)}S]=f([e_{11}^{(i)}R_i]),\] for $1\leq i \leq l$. Now $K^{\gr}_0(R)$ is generated by $[e_{11}^{(i)}R_i]$, $1\leq i \leq l$, as $\mathbb Z[\Gamma]$-module. This implies that $K^{\gr}_0(\phi)=f$. 
\end{proof}

 
\forget 
 
 \begin{theorem}\label{gfw76}
Let $E$ and $F$ be finite graphs with no sinks.
Then $\LL(E)\cong_{\gr} \LL(F)$ if and only if there is a $\mathbb Z[x,x^{-1}]$-module isomorphism
\begin{equation}\label{ncfd1}
\big (K_0^{\gr}(\LL(E)),[\LL(E)]\big ) \cong \big (K_0^{\gr}(\LL(F)),[\LL(F)]\big ).
\end{equation}
\end{theorem}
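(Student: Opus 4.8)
The plan is to treat the two implications separately: the forward one by the functoriality of $K^{\gr}_0$, and the reverse one by reducing, through Dade's theorem, to an equivariant form of Elliott's classification of ultramatricial algebras, for which Theorem~\ref{catgrhsf} is designed to supply the finite-stage building block.

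The forward direction is purely formal. If $\LL(E)\cong_{\gr}\LL(F)$ via a graded ring isomorphism $\Phi$, then $\Phi$ induces an equivalence $\Pgr(\LL(E))\to\Pgr(\LL(F))$ carrying $\LL(E)$ to $\LL(F)$, sending the cone of graded finitely generated projectives onto the cone, and commuting with the suspension functors $\mathcal T_\alpha$. Passing to $K^{\gr}_0$ gives an order- and unit-preserving isomorphism that intertwines the two $\mathbb Z[x,x^{-1}]$-actions (the action of $x$ being $[P]\mapsto[P(1)]$), which is exactly the required isomorphism in $\mathcal P$.

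For the reverse direction I would first use that a finite graph with no sinks yields a strongly $\mathbb Z$-graded algebra $\LL(E)$ (Theorem~\ref{hazst}). Dade's theorem (\S\ref{dadestmal}) then identifies $\mbox{gr-}\LL(E)$ with $\mbox{mod-}(\LL(E))_0$ and produces a $\mathbb Z[x,x^{-1}]$-module isomorphism
\[
K^{\gr}_0(\LL(E))\cong K_0\big((\LL(E))_0\big),
\]
under which the $x$-action becomes the automorphism $\beta_{E*}$ of the ultramatricial algebra $(\LL(E))_0$ induced by the shift $\beta_E=\mathcal T_1$. Because $\LL(E)$ is strongly graded, a graded ring isomorphism $\LL(E)\cong_{\gr}\LL(F)$ amounts to an isomorphism $(\LL(E))_0\cong(\LL(F))_0$ intertwining the shift automorphisms $\beta_E,\beta_F$ (equivalently, the invertible degree-one bimodules), while the hypothesis delivers exactly an order- and unit-preserving $h\colon K_0((\LL(E))_0)\to K_0((\LL(F))_0)$ with $h\beta_{E*}=\beta_{F*}h$. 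To build the lift I would write $(\LL(E))_0=\varinjlim_n F_n^E$ and $(\LL(F))_0=\varinjlim_n F_n^F$ as direct limits of graded matricial $K$-algebras (the $F_n^E$ spanned by monomials $pq^*$ with $|p|=|q|\le n$), chosen compatibly with the shifts, and run Elliott's back-and-forth intertwining: Theorem~\ref{catgrhsf} lifts each prescribed $K^{\gr}_0$-morphism between finite stages to an algebra homomorphism, unique up to a degree-zero inner automorphism, and the resulting commuting ladder has a direct limit $\Phi_0\colon(\LL(E))_0\to(\LL(F))_0$ realizing $h$.

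The hard part will be forcing this limit isomorphism to intertwine the shifts. Having produced $\Phi_0$ realizing $h$, the maps $\Phi_0\beta_E$ and $\beta_F\Phi_0$ agree on $K_0$ because $h$ is $\mathbb Z[x,x^{-1}]$-equivariant, so by the faithfulness half of Theorem~\ref{catgrhsf} they differ stage by stage only by inner automorphisms; the task is to absorb these inner corrections into the choices made during the intertwining so that the discrepancies telescope to the identity in the limit. Since an inner automorphism of $(\LL(F))_0$ by a degree-zero unit extends to a graded inner automorphism of $\LL(F)$, a coherent absorption would promote $\Phi_0$ to the desired graded isomorphism $\LL(E)\cong_{\gr}\LL(F)$ through Dade's equivalence. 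Making these inner-automorphism corrections converge compatibly with the shift across the entire tower—an equivariant Elliott intertwining for the pairs $\big((\LL(E))_0,\beta_E\big)$—is the crux on which the theorem in full generality rests, and it is the single step I expect to be the genuine obstacle.
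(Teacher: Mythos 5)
Your forward direction is fine and matches the paper. The reverse direction, however, is left genuinely incomplete at exactly the point you flag: after producing $\Phi_0$ realizing $h$, you only know that $\Phi_0\beta_E$ and $\beta_F\Phi_0$ agree on $K_0$, hence differ stage by stage by inner automorphisms, and you give no mechanism for making those inner corrections telescope coherently across the whole tower. This equivariant refinement of Elliott's intertwining is not supplied by Theorem~\ref{catgrhsf}, whose uniqueness clause controls one homomorphism at a time with no compatibility between the correction chosen at stage $n$ and the shift. Moreover the problem is not merely technical: in general the degree-one component $(\LL(E))_1$ is only an invertible $(\LL(E))_0$-bimodule, so there is no shift \emph{automorphism} $\beta_E$ of $(\LL(E))_0$ to intertwine in the first place (as your own parenthesis concedes), and what you would need is a classification of pairs (ultramatricial algebra, invertible bimodule) by equivariant $K_0$ --- which is essentially the theorem being proved, so the outline is circular at its crux.

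The paper's own proof avoids this obstacle entirely, and the detour is worth internalizing. It invokes only the plain, non-equivariant Elliott theorem to obtain \emph{some} isomorphism $\rho\colon R_0\to S_0$ with $K_0(\rho)=f_0$ (where $R=\LL(E)$, $S=\LL(F)$), never asking $\rho$ to respect the shift. It then transports graded module categories: $\mathcal G=(-\otimes_{S_0}S)\circ(-\otimes_{R_0}S_0)\circ(-)_0\colon \mbox{gr-}R\to\mbox{gr-}S$ is an equivalence, and the $\mathbb Z[x,x^{-1}]$-linearity of $f$ is used only where it is actually available, namely on $K_0$: it gives $[\mathcal G(P(\alpha))]=[\mathcal G(P)(\alpha)]$ in $K^{\gr}_0(S)$, and since $S_0$ is ultramatricial, hence unit-regular, classes in $K_0$ determine finitely generated projectives up to isomorphism (\cite[Proposition~15.2]{goodearlbook}, transported through Dade's equivalence), so $\mathcal G\mathcal T_\alpha\cong\mathcal T_\alpha\mathcal G$ as functors. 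The graded ring isomorphism is then read off from the graded endomorphism ring rather than from a limit of stage-wise maps: $\mathcal G$ induces isomorphisms $\mathcal G_\alpha\colon\Hom_{\mbox{gr-}R}(R,R(\alpha))\to\Hom_{\mbox{gr-}S}(S,S(\alpha))$, given explicitly by $h\mapsto h_0\otimes 1$ and compatible with composition, and summing over $\alpha\in\mathbb Z$ yields $\LL(E)\cong_{\gr}\LL(F)$. In short, equivariance is needed only at the level of $K_0$ (where it is the hypothesis) together with cancellation, never at the level of the algebra isomorphism $\rho$; the residual delicacy is merely the coherence check $\mathcal G_{\alpha+\beta}(kh)=\mathcal G_\beta(k)\mathcal G_\alpha(h)$, which the explicit formula makes routine. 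Replacing your equivariant intertwining with this categorical transport is what closes the gap.
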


\begin{proof}
Set $R=\LL(E)$ and $S=\LL(F)$. By Theorem~\ref{hazst}, $R$ and $S$ are strongly graded rings. Using Dade's theorem (see \S\ref{dadestmal}), the isomorphism (\ref{ncfd1}), call it $f$,  induces an ordered preserving isomorphism 
\begin{equation}\label{lkadff}
f_0: \big (K_0(R_0),[R_0]\big ) \cong \big (K_0(S_0),[S_0]\big ),
\end{equation}
(see Diagram~\ref{gsqqwq}), defined on generators by $f_0([M])=f([M\otimes_{R_0}R])_0$, where $M$ is a finitely generated   projective $R_0$-module. 
\begin{equation}\label{gsqqwq}
\xymatrix{
K^{\gr}_0(R) \ar[rr]^{f} && K^{\gr}_0(S) \ar[d]^{(-)_0}\\
K_0(R_0) \ar[u]^{-\otimes_{R_0}R} \ar@{.>}[rr]^{f_0} && K_0(S_0)
}
\end{equation}

Since $R_0$ and $S_0$
 are ultramatricial algebras, by Elliott's theorem (see \cite[Theorem 15.26]{goodearl}), there exists an isomorphism 
$\rho:R_0 \rightarrow S_0$ which induces the isomorphism $f_0$ on $K_0$-groups, i.e., $f_0=K_0(\rho)$. 
Now define the functor $\mathcal G: \mbox{gr-}R \rightarrow \mbox{gr-}S$  as the composition of the functors of the diagram below. 
 \[
\xymatrix{
\mbox{gr-}R \ar@{.>}[rr]^{\mathcal G} \ar[d]_{(-)_0}&& \mbox{gr-}S \\
\mbox{mod-}R_0 \ar[rr]^{-\otimes_{R_0} {S_0}} && \mbox{mod-}S_0 \ar[u]_{-\otimes_{S_0}S}
}
\]
Thus for graded finitely generated  projective $R$-module $P$, $\mathcal G(P)=P_0\otimes_{R_0}S$. 
 Note that since all the functors involved in the diagram are equivalences, so is $\mathcal G$.

We show that $\mathcal G$ commutes with the suspension functors $\mathcal T_\alpha$ for any $\alpha \in \Gamma$, up to isomorphism, i.e., there are natural graded $S$-isomorphisms $\mathcal G (P(\alpha)) \cong \mathcal G(P)(\alpha)$, where $P$ is a graded finitely generated   projective $R$-module. 
Since $S$ is strongly graded, it suffices to prove $\mathcal G \big (P(\alpha)\big)_0 \cong \big (\mathcal G (P)(\alpha)\big)_0$ as $S_0$-modules. Since $S_0$ is an ultramatricial algebra and ultramatricial algebras are unit-regular, by Proposition~15.2 in~\cite{goodearlbook}, it suffices to show  
\begin{equation} \label{ks5543}
\big [\mathcal G \big (P(\alpha)\big)_0\big] = \big [\big(\mathcal G (P)(\alpha)\big)_0\big]
\end{equation}
 in $K_0(S_0)$. But this is the case as all the functors involved (Diagram~\ref{gsqqwq}) induce $\mathbb Z[\Gamma]$-module homomorphism on the level of $K$-groups. In details, if   
$\big [\mathcal G \big (P(\alpha)\big)\big] = \big [\big(\mathcal G (P)(\alpha)\big)\big]$ in $K^{\gr}_0(S)$ then applying the functor $(-)_0$ we obtain (\ref{ks5543}). But since $K_0(\mathcal G)=f$, we have 
\[\big [\mathcal G \big (P(\alpha)\big)\big]=f([P(\alpha)])= f(\alpha [P]) =\alpha f([P]) =\alpha [\mathcal G (P)]=\big [(\mathcal G (P)(\alpha)\big].\]

So $\mathcal G$ is a graded equivalence between $\mbox{gr-}R$ and $\mbox{gr-}S$ which commutes with suspension functors up to isomorphism, i.e.,  $\mathcal G\mathcal T_\alpha\cong \mathcal T_\alpha \mathcal G$ for $\alpha \in \Gamma$. 
Consider the group homomorphism induced by the functor $\mathcal G_\alpha$,
\[\mathcal G_\alpha: \Hom_{\mbox{gr-}R}\big (R,R(\alpha)\big )\longrightarrow \Hom_{\mbox{gr-}S}\big (S,S(\alpha)\big ),\]
Thus for $h:R \rightarrow R(\alpha)$ we have 
\[\mathcal G_\alpha(h): S=(R_0\otimes_{R_0} S_0)\otimes_{S_0} S \stackrel{f_0\otimes 1}\longrightarrow (R_n\otimes_{R_0} S_0)\otimes_{S_0} S=S(\alpha).\]
Check that if $h\in  \Hom_{\mbox{gr-}R}\big (R,R(\alpha)\big )$ and $k\in  \Hom_{\mbox{gr-}R}\big (R,R(\beta)\big )$, then \[\mathcal G_{\alpha+\beta}(kh)=\mathcal G_{\beta} (k)\mathcal G_{\alpha} (h) \in \Hom_{\mbox{gr-}S}\big (S,S(\alpha+\beta)\big ).\]
Since $\mathcal G$ is an equivalence of the categories, (so it does have an inverse), then $\mathcal G_\alpha$ are isomorphism. Therefore \[\LL(E)= R\cong_{\gr} \Hom_R(R,R) = \bigoplus_{\alpha \in \Gamma} \Hom(R,R(\alpha)) \longrightarrow  \bigoplus_{\alpha \in \Gamma} \Hom(S,S(\alpha)) = \Hom_S(S,S) \cong_{\gr} S=\LL(F).\] 
\end{proof}

\begin{remark}
The last part of the proof was inspired by Proposition~5.3 in \cite{greengordon}. This proposition states that if 
$\mathcal U:\mbox{gr-}R\rightarrow \mbox{gr-}S$ is a graded equivalence, then $\mathcal U$ is isomorphism to a graded functor $\mathcal U'$ such that $\mathcal U'$ also induces an equivalence between $\mbox{mod-}R$ and $\mbox{mod-}S$, i.e., there is an equivalent functor $\mathcal M$ such that the following diagram commutes
 \[
\xymatrix{
\mbox{gr-}R \ar[rr]^{\mathcal U'} \ar[d]_{\mathcal F}&& \mbox{gr-}S \ar[d]_{\mathcal F}\\
\mbox{mod-}R \ar[rr]^{\mathcal M} && \mbox{mod-}S 
}
\] where $\mathcal F$ is the forgetful functor. 

 This can't be used directly here.  For, in \cite{greengordon} a functor is defined to be graded if it commutes with suspensions, i.e., 
$\mathcal U \mathcal T_\alpha=\mathcal T_\alpha \mathcal U$. In our setting this is not the case, i.e., we only have $\mathcal U\mathcal T_\alpha\cong \mathcal T_\alpha \mathcal U$ for $\alpha \in \Gamma$
\end{remark}

\forgotten 


\section{Leavitt path algebras}

In this section we gather some  graph-theoretic definitions and recall the basics on Leavitt path algebras, including the calculation of Grothendieck group $K_0$.  The reader familiar with this topic can skip to Section~\ref{daryak}. 

A {\it directed graph} $E=(E^0,E^1,r,s)$ consists of two countable sets $E^0$, $E^1$ and maps $r,s:E^1\rightarrow E^0$. The elements of $E^0$ are called {\it vertices} and the elements of $E^1$ {\it edges}. If $s^{-1}(v)$ is a finite set for every $v \in E^0$, then the graph is called {\it row-finite}. In this note we will consider only row-finite graphs. In this setting, if the number of vertices, i.e.,  $|E^0|$,  is finite, then the number of edges, i.e.,  $|E^1|$, is finite as well and we call $E$ a {\it finite} graph. 

 For a graph $E=(E^0,E^1,r,s)$, a vertex $v$ for which $s^{-1}(v)$ is empty is called a {\it sink}, while a vertex $w$ for which $r^{-1}(w)$ is empty is called a {\it source}. An edge with the same source and range is called a {\it loop}. A path $\mu$ in a graph $E$ is a sequence of edges $\mu=\mu_1\dots\mu_k$, such that $r(\mu_i)=s(\mu_{i+1}), 1\leq i \leq k-1$. In this case, $s(\mu):=s(\mu_1)$ is the {\it source} of $\mu$, $r(\mu):=r(\mu_k)$ is the {\it range} of $\mu$, and $k$ is the {\it length} of $\mu$ which is  denoted by $|\mu|$. We consider a vertex $v\in E^0$ as a {\it trivial} path of length zero with $s(v)=r(v)=v$. 
If $\mu$ is a nontrivial path in $E$, and if $v=s(\mu)=r(\mu)$, then $\mu$ is called a {\it closed path based at} $v$. If $\mu=\mu_1\dots\mu_k$ is a closed path based at $v=s(\mu)$ and $s(\mu_i) \not = s(\mu_j)$ for every $i \not = j$, then $\mu$ is called a {\it cycle}.  

For two vertices $v$ and $w$, the existence of a path with the source $v$ and the range $w$ is denoted by $v\geq w$. Here we allow paths of length zero. By $v\geq_n w$, we mean there is a path of length $n$ connecting these vertices. Therefore $v\geq_0 v$ represents the vertex $v$. Also, by $v>w$, we mean a path from $v$ to $w$ where $v\not = w$. In this note, by $v\geq w' \geq w$, it is understood that there is a path connecting $v$ to $w$ and going through $w'$ (i.e., $w'$ is on the path connecting $v$ to $w$). For $n\geq 2$, we define $E^n$ to be the set of paths of length $n$ and $E^*=\bigcup_{n\geq 0} E^n$, the set of all paths.

\begin{deff}\label{llkas}{\sc Leavitt path algebras.} \label{LPA} \\For a graph $E$ and a field $K$, we define the {\it Leavitt path algebra of $E$}, denoted by $\LL_K(E)$, to be the algebra generated by the sets $\{v \mid v \in E^0\}$, $\{ \alpha \mid \alpha \in E^1 \}$ and $\{ \alpha^* \mid \alpha \in E^1 \}$ with the coefficients in $K$, subject to the relations 

\begin{enumerate}
\item $v_iv_j=\delta_{ij}v_i \textrm{ for every } v_i,v_j \in E^0$.

\item $s(\alpha)\alpha=\alpha r(\alpha)=\alpha \textrm{ and }
r(\alpha)\alpha^*=\alpha^*s(\alpha)=\alpha^*  \textrm{ for all } \alpha \in E^1$.

\item $\alpha^* \alpha'=\delta_{\alpha \alpha'}r(\alpha)$, for all $\alpha, \alpha' \in E^1$.

\item $\sum_{\{\alpha \in E^1, s( \alpha)=v\}} \alpha \alpha^*=v$ for every $v\in E^0$ for which $s^{-1}(v)$ is non-empty.

\end{enumerate}
\end{deff}
Here the field $K$ commutes with the generators $\{v,\alpha, \alpha^* \mid v \in E^0,\alpha \in E^1\}$. Throughout the note, we sometimes  write $\LL(E)$ instead of $\LL_K(E)$. The elements $\alpha^*$ for $\alpha \in E^1$ are called {\it ghost edges}.

Setting $\deg(v)=0$, for $v\in E^0$, $\deg(\alpha)=1$ and $\deg(\alpha^*)=-1$ for $\alpha \in E^1$, we obtain a natural $\mathbb Z$-grading on the free $K$-ring generated by  $\{v,\alpha, \alpha^* \mid v \in E^0,\alpha \in E^1\}$. Since the relations in the above definition are all homogeneous, the ideal generated by these relations is homogeneous and thus we have a natural $\mathbb Z$-grading on $\LL_K(E)$. 

If $\mu=\mu_1\dots\mu_k$, where $\mu_i \in E^1$, is an element of $\LL(E)$, then we denote by $\mu^*$ the element $\mu_k ^*\dots \mu_1^* \in \LL(E)$. Since $\alpha^* \alpha'=\delta_{\alpha \alpha'}r(\alpha)$, for all $\alpha, \alpha' \in E^1$, any word can be written as $\mu \gamma ^*$ where $\mu$ and $\gamma$ are paths in $E$.  The elements of the form $\mu\gamma^*$ are called {\it monomials}. 

Taking the grading into account, one can write $\LL(E) =\textstyle{\bigoplus_{k \in \mathbb Z}} \LL(E)_k$ where,
\begin{equation}\label{grrea}
\LL(E)_k=  \Big \{ \sum_i r_i \alpha_i \beta_i^*\mid \alpha_i,\beta_i \textrm{ are paths}, r_i \in K, \textrm{ and } |\alpha_i|-|\beta_i|=k \textrm{ for all } i \Big\}.
\end{equation}
For simplicity we denote $\LL(E)_k$, the homogeneous elements of degree $k$, by $\LL_k$.

We define an (anti-graded) involution 
on $\LL(E)$ by $\overline {\mu\gamma^*}=\gamma\mu^*$ for the monomials and extend it to the whole $\LL(E)$ in the obvious manner. Note that if $x\in \LL(E)_n$, then $\overline x \in \LL(E)_{-n}$.

By constructing a representation of $\LL_K(E)$ in $\End_K(V)$, for a suitable $K$-vector space $V$, one can show that the vertices of the  graph $E$ are linearly independent in $\LL_K(E)$ and the edges and ghost edges are not zero (see Lemma~1.5 in \cite{goodearl}).

Throughout the note we need some more definitions which we gather here.

\begin{deff}\hfill  \label{mulidef}
\begin{enumerate}

\item A path  which does not contain a cycle is called a {\it acyclic} path. 

\item A graph without cycles is called a {\it acyclic} graph.

\item Let $v \in E^0$. Then the {\it out-degree} and the {\it total-degree} of $v$ are defined as $\outdeg(v)=\card(s^{-1}(v))$ and 
$\totdeg(v)=\card(s^{-1}(v) \cup r^{-1}(v))$, respectively.

\item A finite graph $E$ is called a {\it line graph} if it is connected, acyclic and $\totdeg(v)\leq 2$ for every 
$v \in E^0$.  If we want to emphasize the 
number of vertices, we say that $E$ is an $n$-line graph whenever $n=\card(E^0)$. An {\it oriented} $n$-line graph $E$ is an $n$-line graph such that  $E^{n-1} \not =\emptyset$.  

\item For  any vertex $v \in E^0$, the cardinality of the set $R(v)=\{\alpha \in E^* \mid r(\alpha)=v\}$ is denoted by $n(v)$. 

\item For any vertex $v \in E^0$, the {\it tree} of $v$, denoted by $T(v)$,  is the set $\{w\in E^0 \mid v\geq w \}$. Furthermore, for $X\subseteq E^0$, we define $T(X)=\bigcup_{x \in X}T(x)$. 

\item A subset $H$ of $E^0$ is called {\it hereditary} if $v\geq w$ and $v \in H$ imply that $w \in H$.

\item A hereditary set $H$ is {\it saturated}  if $s^{-1}(v) \not = \emptyset$ and $r(s^{-1}(v)) \subseteq  H$, then $v \in H$.
\end{enumerate}
\end{deff}

\begin{deff}\label{petaldef}
A {\it rose with $k$-petals} is a graph which consists of one vertex and $k$ loops. We denote this graph by $L_k$ and its vertex by $s(L_k)$. The Leavitt path algebra of this graph with coefficient in $K$ is denoted by $\LL_K(1,k)$. We allow $k$ to be zero and in this case $L_0$ is just a vertex with no loops. With this convention, one can easily establish that $\LL_K(1,0)\cong K$ and $\LL_K(1,1)\cong K[x,x^{-1}]$.  In a graph which contains a rose $L_k$,  we say $L_k$ does not have an exit, if there is no edge $e$ with $s(e)=s(L_k)$ and $r(e) \not = s(L_k)$. 
\end{deff} 

 We need to recall the definition of morphisms between two graphs in order to consider the category of directed graphs.

For two directed graphs $E$ and $F$, a {\it complete  graph homomorphism} $f:E\rightarrow F$ consists of a map $f^0:E^0 \rightarrow F^0$ and $f^1:E^1 \rightarrow F^1$ such that $r(f^1(\alpha))=f^0(r(\alpha))$ and $s(f^1(\alpha))=f^0(s(\alpha))$ 
for any $\alpha \in E^1$, additionally, $f^0$ is injective and $f^1$ restricts to a bijection from $s^{-1}(v)$ to $s^{-1}(f^0(v))$ for every $v\in E^0$ which emits edges. One can check that such a map induces a graded homomorphism on the level of Leavitt path algebras. i.e, there is a graded homomorphism $\LL(E) \rightarrow \LL(F)$.

\subsection{Polycephaly graphs}\label{poiyre}
Two distinguished types of strongly graded Leavitt path algebras are $C_n$-comet graphs and multi-headed rose graphs (see Figure~\ref{monster2}). We consider  a polycephaly graph (Definition~\ref{popyt}) which is a mixture of these graphs (and so include all these types of graphs). The   graded structure of Leavitt path algebras of polycephaly graphs were determined in \cite[Theorem~4.7]{haz}. In this note we prove Conjectures~\ref{weakconj} and ~\ref{cofian} for the category of polycephaly graphs.

\begin{deff}\label{cometi}
A finite graph $E$ is called a {\it $C_n$-comet}, if $E$ has exactly one cycle $C$ (of length $n$), and $C^0$ is dense, i.e., 
$T(v) \cap (C)^0 \not = \emptyset$ for any vertex $v \in E^0$. 
\end{deff} 

Note that the uniqueness of the cycle $C$ in the definition of $C_n$-comet together with its density implies that the
cycle has no exits.

\begin{deff}\label{wert}
A finite graph $E$ is called a {\it multi-headed comets} if $E$ consists of $C_{l_s}$-comets, $1\leq s\leq t$, of length $l_s$, such that cycles are mutually disjoint and any vertex is connected to at least a cycle. (Recall that the graphs in this paper are all connected).  More formally, $E$ consists of $C_{l_s}$-comets, $1\leq s\leq t$,  and for any vertex $v$ in $E$, there is at least a cycle, say,  $C_{l_k}$, such that $T(v) \cap C_{l_k}^0 \not = \emptyset$, and furthermore no cycle has an exit. 
\end{deff}

\begin{deff} \label{popyt}
A finite graph $E$ is called a {\it polycephaly} graph if $E$ consists of a multi-headed comets and/or an acyclic graph with its sinks attached to roses such that all the cycles and roses are mutually disjoint and any vertex is connected to at least a cycle or a rose. More formally, $E$ consists of $C_{l_s}$-comets, $1\leq s\leq h$, and a finite acyclic graph with sinks $\{v_{h+1},\dots,v_t\}$ together with  $n_s$-petal graphs $L_{n_s}$ attached to $v_s$, where  $n_s \in \mathbb N$ and $h+1\leq s \leq t$.  Furthermore any vertex $v$ in $E$ is connected to at least one of $v_s$, $h+1\leq s \leq t$, or  at least a cycle, i.e., there is  $C_{l_k}$, such that $T(v) \cap C_{l_k}^0 \not = \emptyset$, and  no cycle or a rose has an exit (see Definition~\ref{petaldef}). When $h=0$, $E$ does not have any cycle, and when $t=h$, $E$ does not have any roses.  
\end{deff}

\begin{remark}
Note that a cycle of length one can also be considered as a rose with one petal. This should not cause any confusion in the examples and theorems below. In some proofs (for example proof of Theorem~\ref{mani543}), we collect all the cycles of length one in the graph as comet types and thus all the roses have either zero or more than one petals, i.e.,  $n_s=0$ or $n_s>1$, for any $h+1\leq s \leq t$. 
\end{remark}

\begin{example}\label{popyttr}
 Let $E$ be a polycephaly graph. 
\begin{enumerate}
\item If $E$ contains no cycles, and for any rose $L_{n_s}$ in $E$, $n_s=0$, then $E$ is a finite acyclic graph. 

\item If $E$ consists of only one cycle (of length $n$) and no roses, then $E$ is a $C_n$-comet. 

\item If $E$ contains no roses, then $E$ is a multi-headed comets. 

\item If $E$ contains no cycles, and for any rose $L_{n_s}$ in $E$, $n_s \geq 1$, then $E$ is a multi-headed rose graph (see Figure~\ref{monster2}). 
\end{enumerate}
\end{example}

\begin{example}
The following graph is a (three-headed rose) polycephaly graph.

\begin{equation}\label{monster2}
\xymatrix{
 & & &&   \bullet  \ar@(ul,ur)^{\alpha_{1}}  \ar@(u,r)^{\alpha_{2}} \ar@{.}@(ur,dr) \ar@(r,d)^{\alpha_{n_1}}& \\
   &   \bullet \ar[r]  &  \bullet \ar[r] \ar[urr]   &  \bullet \ar[r] \ar[dr]  \ar[ur] & \bullet \ar[r]  \ar[dr] &    \bullet \ar[r]  & \bullet  \ar@(ul,ur)^{\beta_{1}}  \ar@(u,r)^{\beta_{2}} \ar@{.}@(ur,dr) \ar@(r,d)^{\beta_{n_2}}& \\
 &  &   &    & \bullet \ar[r]  &   \bullet  \ar@(ul,ur)^{\gamma_{1}} \ar@(u,r)^{\gamma_{2}} \ar@{.}@(ur,dr) \ar@(r,d)^{\gamma_{n_3}}&  
 }
 \end{equation}
\end{example}

\begin{example}
The following graph is a (five-headed) polycephaly  graph, with two roses, two sinks and a comet.  
\begin{equation}\label{monster}
\xymatrix@=13pt{
& & & & & \bullet \ar@/^/[dr]      \\
& &  \bullet  & & \bullet \ar@/^/[ur] &&\bullet \ar@/^/[dl] &  \\
& &  &  & & \bullet \ar@/^/[ul]      \\
 \bullet \ar[r]  &   \bullet \ar[r]  \ar[uur] \ar[ddr] &   \bullet \ar[r]  &  \bullet \ar[r]   \ar[ruu] &  \bullet \ar[r] \ar[ddr] \ar[ru] & \bullet \ar[r]  &    \bullet \ar[r]  & \bullet  \ar@(ul,ur)  \ar@(u,r) \ar@{.}@(ur,dr) \ar@(r,d)& \\
  \\
& & \bullet  &   &  \bullet \ar[r]  & \bullet \ar[r]  &   \bullet  \ar@(ul,ur) \ar@(u,r) \ar@{.}@(ur,dr) \ar@(r,d)&  
 }
 \end{equation}
\end{example}

\medskip 

The following theorem classifies Leavitt path algebras of polycephaly graphs. This is Theorem~4.7 of ~\cite{haz}. This will be used in \S\ref{daryak} to calculate graded $K_0$ of polycephaly graphs (including acyclic, comets and multi-headed rose graphs)  and prove Conjecture~\ref{weakconj} for the category of polycephaly graphs (see Theorem~\ref{mani543}).  

\begin{theorem}\label{polyheadb}
Let $K$ be a  field and    $E$ be a polycephaly graph consisting of cycles $C_{l_s}$, $1\leq s\leq h$, of length $l_s$
and an acyclic graph  with sinks $\{v_{h+1},\dots,v_t\}$ which are attached to $L_{n_{h+1}},\dots,L_{n_t}$, respectively.
For any $1\leq s\leq h$ choose $v_s$ (an arbitrary vertex) in $C_{l_s}$ and remove the edge $c_s$ with $s(c_s)=v_s$ from the cycle $C_{l_s}$.  Furthermore, for any $v_s$, $h+1\leq s \leq t$, remove the rose $L_{n_s}$ from the graph. 
In this new acyclic graph $E_1$, let $\{p^{v_s}_i \mid 1\leq i \leq n(v_s)\}$ be the set of all paths which end in $v_s$, $1\leq s \leq t$.
 Then there is a $\mathbb Z$-graded isomorphism
\begin{equation}\label{dampaib}
\LL_K(E) \cong_{\gr}  \bigoplus_{s=1}^h  \M_{n(v_s)}\big(K[x^{l_s},x^{-l_s}]\big)(\overline{p_s}) \bigoplus_{s=h+1}^t \M_{n(v_s)} \big(\LL_K(1,n_s)\big) (\overline{p_s}),
\end{equation}
where $\overline{p_s}=\big(|p_1^{v_s}|,\dots, |p_{n(v_{s})}^{v_s}|\big)$.
\end{theorem}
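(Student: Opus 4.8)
The plan is to realize $\LL_K(E)$ as an internal direct sum of corners, one for each head of the polycephaly graph, and to identify each corner with the corresponding shifted matrix ring. The entire argument is carried out inside $\LL_K(E)$ using the monomials $\mu\gamma^{*}$ and the defining relations of Definition~\ref{llkas}; the no-exit hypothesis on the cycles and roses does all the heavy lifting.

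First I would set, for each head index $s$, the element $q_s=\sum_{i=1}^{n(v_s)} p_i^{v_s}(p_i^{v_s})^{*}$, summed over the paths $p_i^{v_s}$ of $E_1$ ending at $v_s$. Since every $p_i^{v_s}$ terminates at the sink $v_s$ of $E_1$, none is a proper initial segment of another, so $(p_i^{v_s})^{*}p_j^{v_s}=\delta_{ij}v_s$ and each $q_s$ is a homogeneous idempotent of degree $0$. I would then prove (i) orthogonality and (ii) completeness $\sum_s q_s=1$. For (i), a relation $(p_i^{v_s})^{*}p_j^{v_{s'}}\ne 0$ with $s\ne s'$ would force one path to be a prefix of the other, hence force a path to enter and then leave a head vertex, which is impossible as no cycle or rose has an exit; thus $q_sq_{s'}=0$. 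For (ii), starting from $1=\sum_{v\in E^{0}}v$ and repeatedly applying relation (4) of Definition~\ref{llkas} to expand each vertex as $\sum_{\alpha}\alpha\alpha^{*}$, I would halt a branch exactly when it first reaches a head vertex; the finiteness and acyclicity of the tree part, together with the absence of exits, guarantee that this telescoping terminates and collects precisely the monomials $pp^{*}$ with $p\in\{p_i^{v_s}\}$. Because the $q_s$ are homogeneous of degree $0$, this yields a graded ring decomposition $\LL_K(E)\cong_{\gr}\bigoplus_s q_s\LL_K(E)q_s$.

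Next I would identify the core $R_s:=v_s\LL_K(E)v_s$ and the matrix structure of each corner. For a rose head, every path starting and ending at $v_s$ is a word in the loops of $L_{n_s}$ and their ghosts, so $R_s\cong_{\gr}\LL_K(1,n_s)$; for a cycle head, writing $c$ for the product of the $l_s$ edges around $C_{l_s}$ based at $v_s$, the no-exit condition gives $c^{*}c=cc^{*}=v_s$, so every element of $R_s$ is a $K$-combination of powers $c^{k}$ and, since $\deg c=l_s$, one gets $R_s\cong_{\gr}K[x^{l_s},x^{-l_s}]$. Fixing $s$, the elements $e_{ij}=p_i^{v_s}(p_j^{v_s})^{*}$ form a system of matrix units with $e_{ij}e_{kl}=\delta_{jk}e_{il}$, $\sum_i e_{ii}=q_s$, and $\deg e_{ij}=|p_i^{v_s}|-|p_j^{v_s}|$. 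The map $(\xi_{ij})\mapsto\sum_{i,j}p_i^{v_s}\xi_{ij}(p_j^{v_s})^{*}$ with $\xi_{ij}\in R_s$ is then a graded ring isomorphism $\M_{n(v_s)}(R_s)(\ol{p_s})\to q_s\LL_K(E)q_s$: surjectivity follows from $q_s a q_s=\sum_{i,j}p_i^{v_s}\big((p_i^{v_s})^{*}a\,p_j^{v_s}\big)(p_j^{v_s})^{*}$ with the inner factor lying in $R_s$, and injectivity follows by sandwiching a vanishing sum between $(p_k^{v_s})^{*}$ and $p_l^{v_s}$. The grading matches the shifted matrix grading of~(\ref{mmkkhh}) and~(\ref{hogr}) precisely because $\deg e_{ij}=|p_i^{v_s}|-|p_j^{v_s}|=\delta_i-\delta_j$ for $\ol{p_s}=(|p_1^{v_s}|,\dots,|p_{n(v_s)}^{v_s}|)$. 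Assembling the two steps delivers the asserted isomorphism.

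The step I expect to be the main obstacle is the completeness relation $\sum_s q_s=1$. One must arrange the Cuntz--Krieger telescoping so that it halts exactly at the head vertices, terminates despite branching in the acyclic part, and collects each $pp^{*}$ exactly once with no double counting; and one must simultaneously track the $\mathbb Z$-degrees so that the final identification is graded and not merely an isomorphism of rings. Orthogonality and the matrix-unit bookkeeping are routine once the no-exit hypothesis is invoked, so the delicate point is really the termination and single-counting of this telescoping in the presence of multiple heads.
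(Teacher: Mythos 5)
The paper itself does not prove this statement: it is quoted verbatim as Theorem~4.7 of \cite{haz}, so there is no internal proof to compare against; I therefore assess your outline on its own merits. Your strategy (the idempotents $q_s=\sum_i p_i^{v_s}(p_i^{v_s})^*$, the matrix units $p_i^{v_s}(p_j^{v_s})^*$, and the identification of the corner $v_s\LL_K(E)v_s$ with $K[x^{l_s},x^{-l_s}]$ resp.\ $\LL_K(1,n_s)$) is the natural one and can be completed. However, there is one genuine gap, and it is not the step you flagged as delicate. You write that ``because the $q_s$ are homogeneous of degree $0$, this yields a graded ring decomposition $\LL_K(E)\cong_{\gr}\bigoplus_s q_s\LL_K(E)q_s$.'' This is a non sequitur: orthogonal idempotents summing to $1$ give only the Peirce decomposition $\LL_K(E)=\bigoplus_{s,s'}q_s\LL_K(E)q_{s'}$ as graded $K$-modules. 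To obtain a direct sum of \emph{rings} you must prove that the off-diagonal corners vanish, $q_s\LL_K(E)q_{s'}=0$ for $s\neq s'$ (equivalently, that each $q_s$ is central); homogeneity is irrelevant here. Compare $\M_2(K)$: the idempotents $e_{11},e_{22}$ are orthogonal, homogeneous of degree zero, and sum to $1$, yet $\M_2(K)\not\cong K\times K$. Moreover this vanishing is not a formal consequence of your orthogonality computation, because $q_s\LL_K(E)q_{s'}$ is spanned by elements $p_i^{v_s}(p_i^{v_s})^*\,\mu\gamma^*\,p_j^{v_{s'}}(p_j^{v_{s'}})^*$ with $\mu\gamma^*$ an \emph{arbitrary} monomial. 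The statement is true and provable in the same spirit: $(p_i^{v_s})^*\mu\neq 0$ forces $\mu=p_i^{v_s}\mu'$ with $\mu'$ a path that (by the no-exit hypothesis) stays inside head $s$, or $p_i^{v_s}=\mu\nu$ with $\nu$ a path in $E_1$ ending at $v_s$; combining the four resulting cases for $\mu$ and $\gamma$, each one forces either two distinct heads to share a vertex or a path of $E_1$ to leave some $v_s$ — impossible, since the heads are mutually disjoint and each $v_s$ is a sink of $E_1$. This case analysis must be added before the theorem's $\bigoplus$ makes sense.

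Two smaller repairs. First, in the telescoping for $\sum_s q_s=1$ you should halt a branch exactly at the distinguished vertices $v_s$, not at ``the first head vertex'': if a branch enters a cycle $C_{l_s}$ at a vertex other than $v_s$, the unique-outgoing-edge property of a no-exit cycle forces the expansion to continue around the cycle until it reaches $v_s$, and halting earlier would not produce the paths $p_i^{v_s}$. Termination then holds because the sinks of the finite acyclic graph $E_1$ are precisely $v_1,\dots,v_t$ and the expansion never uses the deleted edges. Second, in the rose case your path count only shows that the natural graded homomorphism $\LL_K(1,n_s)\rightarrow v_s\LL_K(E)v_s$ is surjective; injectivity requires an argument (simplicity of $\LL_K(1,n_s)$ for $n_s\geq 2$, graded simplicity of $K[x,x^{-1}]$ for $n_s=1$, triviality for $n_s=0$, together with $v_s\neq 0$ in $\LL_K(E)$). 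Likewise, for a cycle head one should note $c^k\neq 0$ (since $c^k(c^*)^k=v_s\neq 0$), so that the powers of $c$ are linearly independent and $v_s\LL_K(E)v_s\cong_{\gr}K[x^{l_s},x^{-l_s}]$ rather than a quotient of it. With these points supplied, your argument is complete and the grading bookkeeping via (\ref{hogr}) is exactly right.
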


\begin{remark}\label{kjsdyb}
Theorem~\ref{polyheadb} shows that the Leavitt path algebra of a polycephaly graph decomposes into direct sum of three types of algebras. Namely, matrices over the field $K$ (which corresponds to acyclic parts of the graph, i.e, $n_s=0$ in~(\ref{dampaib})), matrices over $K[x^l,x^{-l}]$, $l \in \mathbb N$, (which corresponds to comet parts of the graph) and matrices over Leavitt algebras $\LL(1,n_s)$, $n_s \in \mathbb N$ and $n_s \geq 1$ (which corresponds to rose parts of the graph). Also note that a cycle of length one can also be considered as a rose with one petal, which in either case, on the level of Leavitt path algebras, we obtain matrices over the algebra $K[x,x^{-1}]$. 
\end{remark}

\begin{example} \label{nopain}
Consider the polycephaly graph $E$

\[
 \xymatrix{
& & \bullet  \ar@(ul,ur)  \ar@(u,r) \\
E: \bullet \ar[r]  & \bullet   \ar@<1.5pt>[r]  \ar@<-1.5pt>[r] \ar@<0.5ex>[ur] \ar@<-0.5ex>[ur] \ar@<0ex>[ur] \ar[dr] & \bullet \ar@(rd,ru) &. \\ 
& &   \bullet \ar@/^1.5pc/[r] & \bullet \ar@/^1.5pc/[l]&  \\
 }
\]
\medskip 
Then by Theorem~\ref{polyheadb}, 
\[\LL_K(E)\cong_{\gr} \M_5(K[x,x^{-1}])(0,1,1,2,2)  \oplus \M_4(K[x^2,x^{-2}])(0,1,1,2) \oplus \M_7(\LL(1,2))(0,1,1,1,2,2,2).\]
\end{example}

\subsection{$K_0$ of Leavitt path algebras}

For an abelian monoid $M$, we denote by $M^{+}$ the group completion of $M$. This gives a left adjoint functor to the forgetful functor from the category of abelian groups to abelian monoids. Starting from a ring $R$, the isomorphism classes of finitely generated (right) $R$-modules equipped with the direct sum gives an abelian monoid, denoted by $\mathcal V(R)$. The group completion of this monoid is denoted by $K_0(R)$ and called the Grothendieck group of $R$. For a Leavitt path algebra $\LL_K(E)$, the monoid $\mathcal V(\LL_K(E))$ is studied in~\cite{amp}. In particular using~\cite[Theorem~3.5]{amp}, one can calculate the Grothendieck group of a Leavitt path algebras from the adjacency matrix of a graph (see~\cite[p.1998]{aalp}). We include this calculation in this subsection for the completeness.

Let $F$ be a free abelian monoid generated by a countable set $X$. The nonzero elements of $F$ can be written as $\sum_{t=1}^n x_t$ where $x_t \in X$. Let $r_i, s_i$, $i\in I \subseteq \mathbb N$, be elements of $F$. We define an equivalence relation on $F$ denoted by $\langle r_i=s_i\mid i\in I \rangle$ as follows: Define a binary relation $\rightarrow$ on $F\backslash \{0\}$,  $r_i+\sum_{t=1}^n x_t \rightarrow s_i+\sum_{t=1}^n x_t$, $i\in I$ and generate the equivalence relation on $F$ using this binary relation. Namely, $a \sim a$ for any $a\in F$ and for $a,b \in F \backslash \{0\}$, $a \sim b$ if there is a sequence $a=a_0,a_1,\dots,a_n=b$ such that for each $t=0,\dots,n-1$ either $a_t \rightarrow a_{t+1}$ or $a_{t+1}\rightarrow a_t$. We denote the quotient monoid by
$F/\langle r_i=s_i\mid i\in I\rangle$.
Then one can see that there is a canonical group isomorphism
\begin{equation} \label{monio}
\Big (\frac{F}{\langle r_i=s_i\mid i\in I \rangle}\Big)^{+} \cong \frac{F^{+}}{\langle r_i-s_i\mid i\in I \rangle}.
\end{equation}

\begin{deff}\label{adji}
Let $E$ be a graph and  $N'$ be the adjacency  matrix
$(n_{ij}) \in \mathbb Z^{E^0\oplus E^0}$ where $n_{ij}$ is the
number of edges from $v_i$ to $v_j$. Furthermore let
$I'$ be the identity matrix. Let $N^t$ and $I$ be
the matrices obtained from $N'$ and $I'$ by first taking the transpose and then removing the columns
corresponding to sinks, respectively.
\end{deff}
Clearly the adjacency matrix depends on the ordering we put on
$E^0$. We usually fix
 an ordering on $E^0$ such that the elements of
$E^0 \backslash \sink$ appear first in the list follow with elements
of the $\sink$.

 Multiplying the matrix $N^t-I$ from the left defines a
homomorphism $\mathbb Z^{E^0\backslash \sink} \longrightarrow
\mathbb Z^{E_0}$, where  $\mathbb Z^{E^0\backslash \sink} $ and
$\mathbb Z^{E^0}$ are the direct sum of copies of $\mathbb Z$
indexed by $E^0\backslash \sink$ and $E^0$, respectively. The next
theorem shows that the cokernel of this map gives the Grothendieck
group of Leavitt path algebras. 

\begin{theorem}\label{wke}
Let $E$ be a graph and $\LL(E)$ a Leavitt path algebra. Then
\begin{equation}
K_0(\LL(E))\cong \coker\big(N^t-I:\mathbb Z^{E^0\backslash \sink} \longrightarrow \mathbb Z^{E^0}\big).
\end{equation}
\end{theorem}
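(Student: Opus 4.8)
The plan is to reduce everything to the description of the monoid $\mathcal V(\LL(E))$ obtained by Ara, Moreno and Pardo, and then to pass to the group completion using the combinatorial machinery recorded in~(\ref{monio}). First I would recall from~\cite[Theorem~3.5]{amp} that $\mathcal V(\LL_K(E))$ is isomorphic to the quotient of the free abelian monoid $F$ on the set $E^0$ by the relations
\[ v = \sum_{\{\alpha\in E^1 \mid s(\alpha)=v\}} r(\alpha), \]
one for each vertex $v$ which is not a sink (i.e.\ with $s^{-1}(v)\neq\emptyset$). Such a relation reflects, on the level of projective modules, the decomposition $vA\cong_{\gr}\bigoplus_{s(\alpha)=v} r(\alpha)A$ coming from relation~(4) of Definition~\ref{LPA}. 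In the notation of Definition~\ref{adji} the right-hand side is $\sum_{w\in E^0} n_{vw}\, w$, where $n_{vw}$ is the number of edges from $v$ to $w$, so that $\mathcal V(\LL(E)) \cong F/\langle\, v = \sum_w n_{vw} w \mid v\notin\sink\,\rangle$.

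Next I would apply the group-completion functor. Since $K_0(\LL(E))=\mathcal V(\LL(E))^{+}$ and $F^{+}=\mathbb Z^{E^0}$, the isomorphism~(\ref{monio}) yields
\[ K_0(\LL(E)) \cong \frac{\mathbb Z^{E^0}}{\big\langle\, v - \sum_{w} n_{vw}\, w \;\big|\; v\notin \sink \,\big\rangle}. \]
It then remains only to identify this subgroup of relations with the image of the homomorphism $N^t-I$, after which the quotient is by definition $\coker(N^t-I)$.

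For the final identification I would unwind the matrix conventions of Definition~\ref{adji} explicitly. Writing $\{e_v\}_{v\in E^0}$ for the standard basis of $\mathbb Z^{E^0}$ and $\{g_v\}_{v\notin\sink}$ for that of $\mathbb Z^{E^0\backslash\sink}$, the map $N^t$ sends $g_v$ to $\sum_w n_{vw}\, e_w$ — this is the $v$-column of $N^t$, equivalently the $v$-row of the adjacency matrix $N'$; deleting the sink columns (of the transposed matrices) is exactly what restricts the domain to $\mathbb Z^{E^0\backslash\sink}$, so that we get one relation per non-sink vertex and do not spuriously kill the sinks. Meanwhile $I$ sends $g_v$ to $e_v$. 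Hence $(N^t-I)(g_v) = \sum_w n_{vw}\, e_w - e_v = -\big(v - \sum_w n_{vw}\, w\big)$, so the image of $N^t-I$ is precisely the subgroup generated by the relation elements (the overall sign being irrelevant for the subgroup generated). Thus the displayed quotient is $\coker(N^t-I)$, as claimed.

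The only substantial input is the monoid description~\cite[Theorem~3.5]{amp}, which I am permitted to cite; the remaining steps are routine. The one place demanding care — and the most likely source of error — is matching the orientation conventions, namely the transpose, which columns of the identity and adjacency matrices are removed, and the sign coming from writing the relation as $v-\sum_w n_{vw}\, w$ rather than $\sum_w n_{vw}\, w-v$. I would therefore state the dictionary between the monoid relations and the columns of $N^t-I$ as above before drawing the conclusion.
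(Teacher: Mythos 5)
Your proposal is correct and follows essentially the same route as the paper: cite \cite[Theorem~3.5]{amp} for the presentation of $\mathcal V(\LL(E))$ as the quotient of the free abelian monoid on $E^0$ by the relations $v=\sum_{s(\alpha)=v}r(\alpha)$, pass to the group completion via~(\ref{monio}), and identify the resulting relation subgroup of $\mathbb Z^{E^0}$ with the image of $N^t-I$. Your explicit unwinding of the transpose/column-deletion/sign conventions is a point the paper leaves as ``easy to see,'' but it is the same identification.
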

\begin{proof}
Let $M_E$ be the abelian monoid generated by $\{v \mid v \in E^0 \}$ subject to the relations
\begin{equation}\label{phgqcu}
v=\sum_{\{\alpha\in E^{1} \mid s(\alpha)=v \}} r(\alpha),
\end{equation}
for every $v\in E^0\backslash \sink$, where $n_v=
\max\{w(\alpha)\mid \alpha\in E^{\st}, \, s( \alpha)=v\}$. Arrange
$E^0$ in a fixed order such that the elements of $E^0\backslash
\sink$ appear first in the list follow with elements of $\sink$. The
relations ~(\ref{phgqcu}) can be then written as $N^t \overline v_i= I
\overline v_i$, where  $v_i \in E^0\backslash \sink $ and
$\overline v_i$ is the $(0,\dots,1,0,\dots)$ with $1$ in the $i$-th
component.   Therefore,
\[M_E\cong \frac{F}{ \langle N^t \overline v_i = I \overline v_i , v_i \in E^0 \backslash \sink \rangle},\] where $F$ is the free abelian monoid generated by the vertices of $E$.
By~\cite[Theorem~3.5]{amp} there is a natural monoid isomorphism $\V(\LL_K(E))\cong M_E$. So using~(\ref{monio}) we have,
\begin{equation}\label{pajd}
K_0(\LL(E))\cong \V(\LL_K(E))^{+}\cong M_E^{+}\cong \frac{F^+}{ \langle (N^t-I) \overline v_i, v_i \in E^0 \backslash \sink \rangle}.
\end{equation}
Now $F^{+}$ is $\mathbb Z^{E^0}$ and it is easy to see that the
denominator in ~(\ref{pajd}) is the image of $N^t-I:\mathbb
Z^{E^0\backslash \sink} \longrightarrow \mathbb Z^{E^0}$.
\end{proof}

\section{Graded $K_0$ of Leavitt path algebras}\label{daryak}

For a strongly graded ring $R$, Dade's Theorem implies that
$K_0^{\gr}(R) \cong K_0(R_0)$ (\S\ref{dadestmal}). In~\cite{haz}, we determined the
strongly graded Leavitt path algebras. In particular we proved

\begin{theorem}[\cite{haz}, Theorem~3.15]\label{hazst}
Let $E$ be a finite graph. The Leavitt path algebra $\LL(E)$ is
strongly graded if and only if $E$ does not have a sink.
\end{theorem}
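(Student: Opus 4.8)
The plan is to reduce the statement to the standard characterization of strongly graded rings in the $\mathbb Z$-graded setting: a $\mathbb Z$-graded ring $A=\bigoplus_k A_k$ is strongly graded if and only if $1\in A_1A_{-1}$ and $1\in A_{-1}A_1$ (see \cite[Ch.~I]{grrings}; from these two memberships one obtains $1\in A_nA_{-n}$ and $1\in A_{-n}A_n$ for every $n$ by substituting $1=\sum a_ib_i$ into itself, whence $A_mA_n=A_{m+n}$ for all $m,n$). So it suffices to examine the two products $\LL(E)_1\LL(E)_{-1}$ and $\LL(E)_{-1}\LL(E)_1$ inside $\LL(E)_0$, which by~(\ref{grrea}) are spanned by the monomials $\mu\ga^*$ of the relevant degree. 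Throughout I would use the relations $\al^*\al=r(\al)$ and $v=\sum_{s(\al)=v}\al\al^*$ (the latter available exactly when $v$ is not a sink), together with $\sum_{v\in E^0}v=1$ (as $E$ is finite) and the fact that each vertex is a nonzero idempotent.

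For the converse I would argue by contraposition. Suppose $v$ is a sink. Every element of $\LL(E)_1$ is a combination of monomials $\mu\ga^*$ with $|\mu|=|\ga|+1\geq 1$, so $\mu$ is a nontrivial path and $v\mu\ga^*=\delta_{v,s(\mu)}\mu\ga^*=0$, since a sink emits no edge; hence $v\LL(E)_1=0$. If we had $v=\sum_i a_ib_i$ with $a_i\in\LL(E)_1$ and $b_i\in\LL(E)_{-1}$, then $v=v^2=v\big(\sum_i a_ib_i\big)=\sum_i(va_i)b_i=0$, contradicting $v\neq 0$. Thus $v\notin\LL(E)_1\LL(E)_{-1}$, so $\LL(E)_1\LL(E)_{-1}\neq\LL(E)_0$ and $\LL(E)$ is not strongly graded.

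For the forward direction assume $E$ has no sinks. Then every vertex satisfies $v=\sum_{s(\al)=v}\al\al^*$, and summing over $E^0$ gives $1=\sum_{e\in E^1}ee^*\in\LL(E)_1\LL(E)_{-1}$, which settles the first membership. The real work is $1\in\LL(E)_{-1}\LL(E)_1$, equivalently $ee^*\in\LL(E)_{-1}\LL(E)_1$ for every edge $e$. Here I would exploit finiteness: starting from $ee^*=e\,r(e)\,e^*$ and repeatedly substituting $r(\rho)=\sum_{s(f)=r(\rho)}ff^*$, I expand $ee^*=\sum_{\rho\in S}\rho\rho^*$, extending each branch until its endpoint $r(\rho)$ first lies on a cycle. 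Every forward path reaches such a vertex within $|E^0|$ steps, for otherwise it would revisit a vertex and thereby create a cycle; so $S$ is a finite set of paths extending $e$. A vertex lying on a cycle admits an incoming path of every length (walk backwards around the cycle), so for each $\rho\in S$ I pick a path $\si$ with $r(\si)=r(\rho)$ and $|\si|=|\rho|+1$, and then
\[
(\rho\si^*)(\si\rho^*)=\rho(\si^*\si)\rho^*=\rho\,r(\rho)\,\rho^*=\rho\rho^*,
\]
with $\rho\si^*\in\LL(E)_{-1}$ and $\si\rho^*\in\LL(E)_1$. Hence each $\rho\rho^*$, and therefore $ee^*$ and $1$, lie in $\LL(E)_{-1}\LL(E)_1$.

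The main obstacle is exactly this last step. Whereas $\LL(E)_1\LL(E)_{-1}=\LL(E)_0$ falls out of the single Cuntz--Krieger relation, the equality $\LL(E)_{-1}\LL(E)_1=\LL(E)_0$ is not symmetric to it: the involution carries $\LL(E)_1\LL(E)_{-1}$ to itself rather than to $\LL(E)_{-1}\LL(E)_1$, and at the naive level it fails for source vertices, since there need be no length-two path into $r(e)$. Routing each $ee^*$ forward until it reaches a cycle, and using that cycle vertices receive incoming paths of arbitrary length in order to balance the degree, is the crux, and it is precisely where the hypothesis that $E$ is finite enters.
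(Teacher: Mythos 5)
Your proof is correct: the reduction of strong grading to the two memberships $1\in\LL(E)_{1}\LL(E)_{-1}$ and $1\in\LL(E)_{-1}\LL(E)_{1}$, the sink obstruction via $v\LL(E)_{1}=0$ (using that vertices are nonzero), and the treatment of the hard membership by expanding each $ee^{*}$ forward with relation (4) of Definition~\ref{llkas} until every branch ends at a vertex on a cycle (which finiteness plus the absence of sinks guarantees within $|E^0|$ steps), then balancing degrees with backward walks around that cycle, are all sound. Note that this paper contains no internal proof of Theorem~\ref{hazst} to compare against — it is quoted from \cite{haz} — but your argument is the standard one for this result and is in substance the same as the proof given there.
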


This theorem shows that many interesting classes of Leavitt path
algebras fall into the category of strongly graded LPAs, such as
purely infinite simple Leavitt path algebras (see~\cite{aap06}).

The following theorem shows that there is a close relation between
graded $K_0$ and the non-graded $K_0$ of strongly graded Leavitt path algebras.

\begin{theorem}\label{ultrad}
Let $E$ be a row-finite graph. Then there is an exact sequence
\begin{equation}\label{zerhom}
K_0(\LL(E)_0) \stackrel{N^t-I}{\longrightarrow}
K_0(\LL(E)_0) \longrightarrow K_0(\LL(E)) \longrightarrow 0.
\end{equation}

Furthermore if $E$ is a finite graph with no sinks, the exact sequence takes the form
\begin{equation}\label{kaler}
K_0^{\gr}(\LL(E)) \stackrel{N^t-I}{\longrightarrow}
K_0^{\gr}(\LL(E)) \longrightarrow K_0(\LL(E)) \longrightarrow 0.
\end{equation} In this case,
\begin{equation}\label{imip}
K_0^{\gr}(\LL(E)) \cong \varinjlim \mathbb Z^{E^0}
\end{equation}
of the inductive system $\mathbb Z^{E^0}
\stackrel{N^t}{\longrightarrow} \mathbb Z^{E^0}
\stackrel{N^t}\longrightarrow \mathbb Z^{E^0}
\stackrel{N^t}\longrightarrow \cdots$.
\end{theorem}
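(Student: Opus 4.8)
Write $A=\LL(E)$ and $A_0=\LL(E)_0$. The plan is to deduce all three assertions from the ultramatricial structure of $A_0$ together with Dade's Theorem, treating the finite sink‑free case as the clean core and the general row‑finite statement~(\ref{zerhom}) as its delicate extension. Recall that $A_0=\varinjlim_n L_n$, where $L_n$ is the matricial subalgebra spanned by the degree‑zero monomials $\alpha\beta^*$ with $|\alpha|=|\beta|\le n$, so that $K_0(A_0)=\varinjlim_n K_0(L_n)$. The degree‑one bimodule $A_1$, together with the relation $v=\sum_{s(\alpha)=v}\alpha\alpha^*$, governs the connecting maps: on the vertex classes the induced ``edge‑expansion'' reads $[vA_0]\mapsto\sum_{s(\alpha)=v}[r(\alpha)A_0]$, which is exactly the matrix $N^t$.

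I would treat the finite sink‑free case first, since there $A$ is strongly graded by Theorem~\ref{hazst}. By Dade's Theorem (\S\ref{dadestmal}) the functor $(-)_0$ gives $K_0^{\gr}(A)\cong K_0(A_0)$ as $\mathbb Z[x,x^{-1}]$‑modules, and under this isomorphism the suspension $x=\mathcal T_1$ corresponds, via diagram~(\ref{veronaair}), to the functor $-\otimes_{A_0}A_1$, which on $K_0$ acts as $N^t$. Moreover, when $E$ has no sinks every degree‑zero monomial can be pushed one level up through the relation above, so each $L_n$ is matricial with $K_0(L_n)$ free on the occurring ranges and all connecting maps equal to $N^t$; hence
\[
K_0^{\gr}(A)\cong K_0(A_0)\cong\varinjlim\bigl(\mathbb Z^{E^0}\xrightarrow{\,N^t\,}\mathbb Z^{E^0}\xrightarrow{\,N^t\,}\cdots\bigr),
\]
which is~(\ref{imip}). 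The sequence~(\ref{kaler}) is then the cokernel of the shift, via a short lemma: apply the exact functor $\varinjlim$ to the map of stationary inductive systems (connecting maps $N^t$) given levelwise by the exact sequence $0\to\mathbb Z^{E^0}\xrightarrow{N^t-I}\mathbb Z^{E^0}\to\coker(N^t-I)\to0$, and observe that $N^t\equiv I$ on $\coker(N^t-I)$, so that limiting system is constant. This yields exactness of $K_0^{\gr}(A)\xrightarrow{N^t-I}K_0^{\gr}(A)\to\coker(N^t-I)\to0$, and since $E$ has no sinks the right‑hand group is $K_0(A)$ by Theorem~\ref{wke}. Thus~(\ref{kaler}) holds, and in the sink‑free case~(\ref{zerhom}) is the same sequence through $K_0^{\gr}(A)\cong K_0(A_0)$.

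For the general row‑finite sequence~(\ref{zerhom}) I would argue with $K_0(A_0)$ directly. The inclusion $A_0\hookrightarrow A$ induces $\iota_*\colon K_0(A_0)\to K_0(A)$, surjective because $K_0(A)$ is generated by the vertex classes $[vA]=\iota_*[vA_0]$ (Theorem~\ref{wke}, \cite{amp}). Using the Murray--von Neumann equivalences $\alpha\alpha^*\sim\alpha^*\alpha=r(\alpha)$, available in $A$ (where the edges live) but not in $A_0$, the relation $v=\sum_{s(\alpha)=v}\alpha\alpha^*$ gives $[vA]=\sum_{s(\alpha)=v}[r(\alpha)A]$, i.e.\ $\iota_*(N^t-I)=0$ on the vertex classes. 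The task is to promote this to exactness of $K_0(A_0)\xrightarrow{N^t-I}K_0(A_0)\xrightarrow{\iota_*}K_0(A)\to0$, that is, to identify $\ker\iota_*$ with the image of the shift and match the result against the presentation $\coker\bigl(N^t-I\colon\mathbb Z^{E^0\setminus\sink}\to\mathbb Z^{E^0}\bigr)$ of Theorem~\ref{wke}.

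The main obstacle is precisely this identification once sinks are present. When $E$ has a sink, $A$ is no longer strongly graded, Dade's Theorem does not apply, and $K_0(A_0)$ is strictly larger than $\mathbb Z^{E^0}$: a degree‑zero monomial ending at a sink cannot be pushed up, so each sink contributes a fresh summand at every level of the telescope $\varinjlim_n K_0(L_n)$. Consequently the telescope is no longer stationary, the clean shift‑cokernel lemma above does not apply verbatim, and the shift endomorphism of $K_0(A_0)$ must be pinned down so that it restricts to $N^t$ on the vertex classes while collapsing the redundant sink classes exactly onto $\ker\iota_*$. Verifying that $N^t-I$ annihilates precisely these sink‑generated summands—and then descends to the map $N^t-I\colon\mathbb Z^{E^0\setminus\sink}\to\mathbb Z^{E^0}$ on the quotient—is the careful bookkeeping on which the general case~(\ref{zerhom}) turns.
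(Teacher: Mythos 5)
Your treatment of the finite sink-free case is correct and is essentially the paper's own argument: the telescope computation $K_0(\LL(E)_0)\cong\varinjlim(\mathbb Z^{E^0},N^t)$, Theorem~\ref{hazst} plus Dade's theorem to convert this into~(\ref{imip}), and then a cokernel identification. Your shift-cokernel lemma --- applying the exact functor $\varinjlim$ to the levelwise sequences $\mathbb Z^{E^0}\xrightarrow{N^t-I}\mathbb Z^{E^0}\to\coker(N^t-I)\to 0$ and noting that $N^t$ induces the identity on the cokernels --- is a clean, self-contained substitute for the paper's citations of Lemma~7.15 of \cite{Raegraph} and Lemma~3.4 of \cite{Raeburn113}; the only blemish is the spurious leading $0$ in your levelwise sequence ($N^t-I$ need not be injective), which is harmless since only right-exactness is used. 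So~(\ref{kaler}) and~(\ref{imip}) are in order.

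The genuine gap is that the main assertion, the sequence~(\ref{zerhom}) for an arbitrary row-finite graph, is never proved: your closing paragraph names the sink bookkeeping as ``the careful bookkeeping on which the general case turns'' and stops there. Two pieces are missing. First, for a finite graph with sinks one must actually compute $K_0(\LL(E)_0)$: writing $W$ for the sinks and $V=E^0\setminus W$, the paper shows $L_{0,n}$ is matricial with a summand for each vertex (paths of length $n$) and an extra summand for each sink and each length $m<n$, so that $K_0(\LL(E)_0)\cong\varinjlim\bigl(\mathbb Z^V\times\mathbb Z^W\times\bigoplus_{m<n}\mathbb Z^W\bigr)$ with connecting maps $N^t\oplus I$; one must then prove that $\coker(N^t-I)$ on this non-stationary telescope equals $\coker\bigl(N^t-I:\mathbb Z^{V}\to\mathbb Z^{V}\times\mathbb Z^W\bigr)$, which is exactly what the paper's appeals to Lemma~7.15 of \cite{Raegraph} and Lemma~3.4 of \cite{Raeburn113} deliver and exactly the step you defer. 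Your alternative route via $\iota_*:K_0(\LL(E)_0)\to K_0(\LL(E))$ also does not get off the ground: when sinks are present the vertex classes $[v\LL(E)_0]$ do not generate $K_0(\LL(E)_0)$ (elements of the telescope come from arbitrarily high levels; e.g.\ for two parallel edges into a sink one gets $K_0(\LL(E)_0)\cong\mathbb Z\oplus\mathbb Z$ while the vertex classes generate $2\mathbb Z\oplus\mathbb Z$), so even the vanishing $\iota_*\circ(N^t-I)=0$ is not established by checking it on vertex classes. Second, the passage from finite to row-finite graphs is absent; the paper obtains it by writing $E=\varinjlim E_i$ as a direct limit of finite complete subgraphs, so that $\LL(E)_0\cong\varinjlim\LL(E_i)_0$, and applying the exact functor $\varinjlim$ to the finite-graph sequences. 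Without these two steps the first --- and only unconditional --- claim of the theorem remains unproven.
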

\begin{proof}
First let $E$ be a finite graph. Write $E^0=V\cup W$ where $W$ is the set of sinks and $V=E^0 \backslash W$. The ring $\LL(E)_0$ can be described as follows (see the proof of
Theorem~5.3 in \cite{amp}):
$\LL(E)_0=\bigcup_{n=0}^{\infty}L_{0,n}$, where $L_{0,n}$ is the
linear span of all elements of the form $pq^*$ with $r(p)=r(q)$ and
$|p|=|q|\leq n$. The transition inclusion $L_{0,n}\rightarrow L_{0,n+1}$ is
the identity on all elements $pq^*$ where $r(p) \in W$ and  is to
identify $pq^*$ with $r(p) \in V$ by
\[\sum_{\{ \alpha | s(\alpha)=v\}} p\alpha (q\alpha)^*.\] Note that since $V$ is a set of vertices which are not
sinks, for any $v\in V$ the set $\{ \alpha | s(\alpha)=v\}$ is not
empty.

For a fixed $v \in E^0$, let
$L_{0,n}^v$ be the linear span of all elements of
the form $pq^*$ with $|p|=|q|=n$ and $r(p)=r(q)=v$. Arrange the paths
of length $n$ with the range $v$ in a fixed order
$p_1^v,p_2^v,\dots,p_{k^v_n}^v$, and observe that the correspondence
of  $p_i^v{p_j^v}^*$ to the matrix unit $e_{ij}$ gives rise to a ring
isomorphism $L_{0,n}^v\cong\M_{k^v_n}(K)$. Furthermore, $L_{0,n}^v$, $v\in E^0$ and
$L_{0,m}^w$, $m<n$, $w\in W$ form a direct sum.  This implies that
\[L_{0,n}\cong
\Big ( \bigoplus_{v\in V}\M_{k_n^v}(K) \times \bigoplus_{v\in
W}\M_{k_n^v}(K) \Big) \times \Big( \bigoplus_{m=0}^{n-1} \big ( \bigoplus_{v
\in W} \M_{k_m^v}(K) \big )\Big),\] where $k_m^v$, $v \in E^0$, 
$0\leq m\leq n$, is the number of paths of length $m$ with the range
$v$. The inclusion map $L_{0,n}\rightarrow L_{0,n+1}$ is then identity on
factors where $v \in W$ and is
\begin{equation}\label{volleyb}
N^t: \bigoplus_{v\in V} \M_{k^v_n}(K) \longrightarrow \bigoplus_{v\in V}
\M_{k^v_{n+1}}(K) \times \bigoplus_{v\in W} \M_{k^v_{n+1}}(K),
\end{equation}
where $N$ is the adjacency matrix of the graph $E$ (see Definition~\ref{adji}).  This
means $(A_1,\dots,A_l)\in \bigoplus_{v\in V} \M_{k^v_n}(K) $ is sent to
$(\sum_{j=1}^l n_{j1}A_j,\dots,\sum_{j=1}^l n_{jl}A_j) \in
\bigoplus_{v\in E^0} \M_{k^v_{n+1}}(K)$, where $n_{ji}$ is the number of
edges connecting $v_j$ to $v_i$ and
\[\sum_{j=1}^lk_jA_j=\left(
\begin{matrix}
A_1 &       &             & \\
       & A_1 &            & \\
       &         & \ddots &\\
       &         &            & A_l\\
\end{matrix}
\right)
\]
in which each matrix is repeated $k_j$ times down the leading
diagonal and if $k_j=0$, then $A_j$ is omitted.
 This can be seen as follows: for $v\in E^0$, arrange the
the set of paths of length $n+1$ with the range $v$ as
\begin{align}
\big \{ & p_1^{v_1}\alpha_1^{v_1v}, \dots,
p_{k_n^{v_1}}^{v_1}\alpha_1^{v_1v}, p_1^{v_1}\alpha_2^{v_1v}, \dots,
p_{k_n^{v_1}}^{v_1}\alpha_2^{v_1v},\dots,
p_1^{v_1}\alpha_{n_{1v}}^{v_1v}, \dots,
p_{k_n^{v_1}}^{v_1}\alpha_{n_{1v}}^{v_1v},\\ \notag &
p_1^{v_2}\alpha_1^{v_2v}, \dots, p_{k_n^{v_2}}^{v_2}\alpha_1^{v_2v},
p_1^{v_2}\alpha_2^{v_2v}, \dots,
p_{k_n^{v_2}}^{v_2}\alpha_2^{v_2v},\dots,
p_1^{v_2}\alpha_{n_{2v}}^{v_2v}, \dots,
p_{k_n^{v_2}}^{v_2}\alpha_{n_{2v}}^{v_2v},\\\notag & \dots \big \},
\end{align}
where $\{p_1^{v_i},\dots,p_{k_n^{v_i}}^{v_i}\}$ are paths of length
$n$ with the range $v_i$ and
$\{\alpha_1^{v_iv},\dots,\alpha_{n_{iv}}^{v_iv}\}$ are all the edges
with the source $v_i\in V$ and range $v$. Now this shows that if
$p_s^{v_i}(p_t^{v_i})^*\in L_{0,n}^{v_i}$ (which corresponds to
the matrix unit $e_{st}$), then $\sum_{j=1}^{n_{iv}} p_s^{v_i}\alpha_{j}^{v_iv}
(p_t^{v_1}\alpha_{j}^{v_iv})^*\in L_{0,n+1}^v$ corresponds to the matrix with 
matrix unit
$e_{st}$ repeated down the diagonal $n_{iv}$ times.

Writing $\LL(E)_0=\varinjlim_{n} L_{0,n}$, since the Grothendieck group $K_0$ respects the
direct limit, we have $K_0(\LL(E)_0)\cong
\varinjlim_{n}K_0(L_{0,n})$. Since $K_0$ of  (Artinian) simple
algebras are $\mathbb Z$, the ring homomorphism $L_{0,n}\rightarrow
L_{0,n+1}$ induces the group homomorphism \[\mathbb Z^V \times \mathbb Z^W \times
\bigoplus_{m=0}^{n-1} \mathbb Z^W\stackrel{N^t\times I}{\longrightarrow}
\mathbb Z^V \times \mathbb Z^W \times \bigoplus_{m=0}^{n}\mathbb Z^W,\]
where $N^t:\mathbb Z^V \rightarrow \mathbb Z^V \times \mathbb Z^W$ is multiplication of the matrix
$N^t=\left(\begin{array}{c}
B^t  \\
C^t
\end{array}\right)
$ from left which is induced by the homomorphism~(\ref{volleyb})
and $I_n: \mathbb Z^W \times \bigoplus_{m=0}^{n-1} \mathbb Z^W \rightarrow
\bigoplus_{m=0}^{n}\mathbb Z^W$ is the identity map. Consider the commutative diagram
\[
\xymatrix{
K_0(L_{0,n}) \ar[r]  \ar[d]^{\cong} & K_0(L_{0,n+1}) \ar[d]^{\cong}\ar[r] & \cdots \\
\mathbb Z^V \times \mathbb Z^W \times
\bigoplus_{m=0}^{n-1} \mathbb Z^W  \ar[r]^{N^t\oplus I_{n}} \ar@{^{(}->}[d] &
\mathbb Z^V \times \mathbb Z^W \times \bigoplus_{m=0}^{n}\mathbb Z^W \ar@{^{(}->}[d] \ar[r]  & \cdots \\
\mathbb Z^V \times \mathbb Z^W \times \mathbb Z^W \times \cdots  \ar[r]^{N^t \oplus I_{\infty}}& \mathbb Z^V \times \mathbb Z^W \times \mathbb Z^W \times \cdots  \ar[r]  & \cdots
}
\]
where
\[N^t\oplus I_{\infty}=
\left(\begin{array}{ccccc}
B^t & 0 & 0 & 0 & \cdot\\
C^t & 0 & 0 & 0 & \cdot \\
0    & 1 & 0 & 0 &  \cdot\\
0    & 0 & 1 & 0 &   \cdot \\
\cdot    & \cdot & \cdot  &  \cdot &   \ddots
\end{array}\right).\]
It is easy to see that the direct limit of the second row in the diagram is isomorphic to the direct limit of the third row.
Thus $K(\LL(E)_0) \cong \varinjlim \big (\mathbb Z^V \times \mathbb Z^W \times \mathbb Z^W \times \cdots \big)$.


Consider the commutative diagram on the left below
which induces a natural map, denoted by $N^t$ again, on the direct
limits.
\[
\xymatrix{
\mathbb Z^{V}  \times \mathbb Z^W \times \cdots  \ar[r]^{N^t} \ar[d]^{N^t} & \mathbb Z^{V}  \times \mathbb Z^W \times \cdots   \ar[r]^{\quad \qquad N^t}  \ar[d]^{N^t} &  \cdots &  & \varinjlim \cong K_0(\LL(E)_0)  \ar@{.>}[d]^{N^t}\\
\mathbb Z^{V} \times \mathbb Z^W \times \cdots\ar[r]^{N^t}   & \mathbb Z^{V} \times \mathbb Z^W \times \cdots\ar[r]^{\quad \qquad N^t}   &  \cdots&  &\varinjlim \cong K_0(\LL(E)_0)
}
\]
Now Lemma 7.15 in~\cite{Raegraph} implies that
\begin{equation*}
\coker\Big
(K_0(\LL(E)_0)\stackrel{N^t-1}{\longrightarrow} K_0(\LL(E)_0\Big )
\cong \coker\Big(\mathbb Z^{V}  \times \mathbb Z^W \times \cdots \stackrel{N^t-1}{\longrightarrow}
\mathbb Z^{V}  \times \mathbb Z^W \times \cdots \Big).
\end{equation*}
And Lemma~3.4 in~\cite{Raeburn113} implies that the
\begin{equation}\label{thlap}
\coker\Big(\mathbb Z^{V}  \times \mathbb Z^W \times \cdots \stackrel{N^t-1}{\longrightarrow}
\mathbb Z^{V}  \times \mathbb Z^W \times \cdots \Big)\cong \coker\Big(\mathbb Z^V \stackrel{N^t-1}{\longrightarrow} \mathbb Z^V \times \mathbb Z^W\Big).
\end{equation}
Finally by Theorem~\ref{wke}, replacing
$\coker\big (\mathbb Z^V \stackrel{N^t-1}{\longrightarrow} \mathbb Z^V \times \mathbb Z^W\big)$ by $K_0(\LL(E))$ in ~(\ref{thlap}) we
obtain the exact sequence
\begin{equation}\label{zerhom11}
K_0(\LL(E)_0) \stackrel{N^t-I}{\longrightarrow}
K_0(\LL(E)_0) \longrightarrow K_0(\LL(E)) \longrightarrow 0
\end{equation}
when the graph $E$ is finite. Now a row-finite graph can be written as a direct limit of finite complete subgraphs~\cite[Lemmas 3.1, 3.2]{amp}, i.e., $E=\varinjlim E_i$, $\LL(E)\cong \varinjlim \LL(E_i)$ and  $\LL(E)_0\cong \varinjlim \LL(E_i)_0$. Since the direct limit is an exact functor, applying $\varinjlim$ to the exact sequence~(\ref{zerhom11}) for $E_i$, we get the first part of the theorem.

When the graph $E$ is finite with no sinks, then $\LL(E)$ is strongly graded (Theorem~\ref{hazst}). Therefore
\[ K_0^{\gr}(\LL(E)) \cong K_0(\LL(E)_0).\] This gives the exact sequence~(\ref{kaler}).
\end{proof}

\begin{example} Let $E$ be the following graph. 

\begin{equation*}
\xymatrix{
& \bullet  \ar[dr] \ar@/_1pc/[dl] & \\
\bullet \ar[ur] \ar@/_1pc/[rr]   & & \bullet \ar[ll] \ar@/_1pc/[ul] 
 }
\end{equation*}
\medskip 

By Theorem~\ref{hazst}, $\LL(E)$ is strongly graded and so by Thereom~\ref{ultrad}(\ref{imip}), 
\[K_0^{\gr}(\LL(E)) \cong \varinjlim_{N^t} \mathbb Z\oplus \mathbb Z \oplus \mathbb Z,\] where
$N^t=
\left(\begin{array}{ccc}
0 & 1 & 1 \\
1 & 0 & 1  \\
1 & 1 & 0 \\
\end{array}\right). $ 
Since $\det(N)=2 \not = 0$, 
one can see that \[\varinjlim_{N^t} \mathbb Z\oplus \mathbb Z \oplus \mathbb Z   \cong \mathbb Z\Big [\frac{1}{\det(N)}\Big]  \oplus  \mathbb Z\Big [\frac{1}{\det(N)}\Big] \oplus  \mathbb Z\Big [\frac{1}{\det(N)}\Big] \cong \bigoplus_3\mathbb Z[1/2].\]
On the other hand by Theorem~\ref{wke}, $K_0(\LL(E)) \cong \mathbb Z /2\mathbb Z \oplus \mathbb Z /2\mathbb Z$ (see~\cite[Example~3.8]{aalp} for the detailed computation). By Thereom~\ref{ultrad}(\ref{kaler}), these groups fit into the following exact sequence.  
\[ \bigoplus_3\mathbb Z[1/2] \stackrel{N^t-I}{\longrightarrow} \bigoplus_3\mathbb Z[1/2] \longrightarrow \mathbb Z /2\mathbb Z \oplus \mathbb Z /2\mathbb Z \longrightarrow 0.\]

\end{example}

\begin{remark}\label{hjsonf}
Let $E$ be a row finite graph. The $K_0$ of graph $C^*$-algebra of $E$ was first computed in~\cite{Raeburn113}, where Raeburn and Szyma\'nski
obtained the top exact sequence of the following commutative
diagram. Here $E\times_1 \mathbb Z$ is a graph with $(E\times_1
\mathbb Z)^0=E^0\times \mathbb Z$, $(E\times_1 \mathbb
Z)^1=E^1\times \mathbb Z$ and $s(e,n)=(s(e),n-1))$ and
$r(e,n)=(r(e),n))$. Looking at the proof of \cite[Theorem~3.2]{Raeburn113}, and Theorem~\ref{ultrad}, one can see that,
$K_0\big(C^*(E\times_1 \mathbb Z)\big) \cong K_0(\LL_{\mathbb C}(E)_0)$  and
the following diagram commutes.
\begin{equation}\label{lol1532}
\xymatrix{ 0 \ar[r] & K_1(C^*(E)) \ar[r] \ar@{.>}[d] &
K_0\big(C^*(E\times_1 \mathbb Z)\big) \ar[d]^{\cong}
\ar[r]^{1-\beta^{-1}_*} & K_0\big(C^*(E\times_1 \mathbb Z)\big) \ar[d]^{\cong} \ar[r]& K_0(C^*(E))\ar@{.>}[d]\ar[r]& 0\\
0 \ar[r] & \ker(1-N^t) \ar[r] & K_0(\LL_{\mathbb C}(E)_0) \ar[r]^{1-N^{t}} &
K_0(\LL_{\mathbb C}(E)_0) \ar[r]& K_0(\LL_{\mathbb C}(E)) \ar[r] &0.}
\end{equation}

This immediately shows that for a row-finite graph $E$, $K_0(C^*(E)) \cong K_0(\LL_{\mathbb C}(E))$. This
was also proved in \cite[Theorem~7.1]{amp}.
In particular if $E$ is a finite graph with no sinks, then
\[K_0\big(C^*(E\times_1 \mathbb Z)\big) \cong K_0^{gr}(\LL_{\mathbb C}(E)).\]
\end{remark}


The graded structure of Leavitt path algebras of polycephaly graphs (which include acyclic and comet graphs) were classified in~\cite{haz} (see Theorem~\ref{polyheadb}). This, coupled by the graded Morita theory~(cf. Proposition~\ref{grmorita}) and~(\ref{dade})  enable us to calculate graded $K_0$ of these graphs. We record them here.

\begin{theorem}[$K_0^{\gr}$ of acyclic graphs]\label{f4j5h6h8}
Let $E$ be a finite acyclic graph with sinks $\{v_1,\dots,v_t\}$. For any sink $v_s$, let $\{p^{v_s}_i \mid 1\leq i \leq n(v_s)\}$ be the set of all paths which end in $v_s$.  Then there is a $\mathbb Z[x,x^{-1}]$-module isomorphism 
\begin{equation}\label{skyone231}
\Big (K_0^{\gr}(\LL(E)),[\LL(E)] \Big ) \cong \Big ( \bigoplus_{s=1}^t \mathbb Z[x,x^{-1}], (d_1,\dots,d_t)\Big ),
\end{equation}
where $d_s=\sum_{i=1}^{n(v_s)}x^{-|p_i^{v_s}|}$.
\end{theorem}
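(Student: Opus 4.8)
The plan is to reduce the computation to the three preparatory results already established: the structure theorem for Leavitt path algebras of polycephaly graphs (Theorem~\ref{polyheadb}), graded Morita equivalence (Proposition~\ref{grmorita}), and the computation of $K_0^{\gr}$ of a graded field (Proposition~\ref{k0grof}). The only genuine work is to track the order-unit $[\LL(E)]$ through this chain and to check that the negative shifts produce the exponents $-|p_i^{v_s}|$ appearing in $d_s$.

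First I would specialize Theorem~\ref{polyheadb} to the acyclic case. An acyclic graph is a polycephaly graph with no cycles ($h=0$) and with $n_s=0$ for every rose, so that $\LL_K(1,0)\cong K$. Hence the structure theorem yields a $\mathbb Z$-graded isomorphism
\[
\LL_K(E) \cong_{\gr} \bigoplus_{s=1}^t \M_{n(v_s)}(K)(\overline{p_s}), \qquad \overline{p_s}=\big(|p_1^{v_s}|,\dots,|p_{n(v_s)}^{v_s}|\big).
\]
Since $K_0^{\gr}$ respects finite direct products (as already used in the proof of Lemma~\ref{fdpahf}), this reduces the problem to computing each summand $K_0^{\gr}\big(\M_{n(v_s)}(K)(\overline{p_s})\big)$ together with the class of the identity module, and then assembling the order-unit coordinatewise as $\big([\M_{n(v_1)}(K)(\overline{p_1})],\dots,[\M_{n(v_t)}(K)(\overline{p_t})]\big)$.

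Next I would apply Proposition~\ref{grmorita} with $A=K$ and $\overline{\delta}=\overline{p_s}$: the functor $\psi\colon \Pgr(\M_{n(v_s)}(K)(\overline{p_s}))\to \Pgr(K)$, $P\mapsto P\otimes A^{n(v_s)}(-\overline{p_s})$, is an equivalence commuting with suspensions, so it induces a $\mathbb Z[x,x^{-1}]$-module isomorphism $K_0^{\gr}(\M_{n(v_s)}(K)(\overline{p_s}))\cong K_0^{\gr}(K)$. The crucial point is that $\psi$ sends the free rank-one module $\M_{n(v_s)}(K)(\overline{p_s})$ to $K^{n(v_s)}(-\overline{p_s})$, so the order-unit of the $s$-th summand maps to $[K^{n(v_s)}(-\overline{p_s})]$. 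Finally, Proposition~\ref{k0grof} identifies $K_0^{\gr}(K)\cong \mathbb Z[x,x^{-1}]$ (as $K$ is trivially graded with $\Gamma=\mathbb Z$) and sends $[K^{n(v_s)}(-\overline{p_s})]$ to $\sum_{i=1}^{n(v_s)} x^{-|p_i^{v_s}|}=d_s$. Putting the summands together gives the asserted isomorphism carrying $[\LL(E)]$ to $(d_1,\dots,d_t)$.

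The resulting map is automatically order-preserving with respect to the natural cones: the graded ring isomorphism and the graded Morita equivalence both carry $\mathcal V^{\gr}$ isomorphically onto $\mathcal V^{\gr}$, and Proposition~\ref{k0grof} identifies $\mathcal V^{\gr}(K)$ with $\mathbb N[x,x^{-1}]$; hence the cone of $K_0^{\gr}(\LL(E))$ is matched with $\bigoplus_{s=1}^t \mathbb N[x,x^{-1}]$. The main obstacle is purely bookkeeping rather than conceptual: one must be careful that the Morita functor introduces the module $A^{n(v_s)}(-\overline{p_s})$ with the sign reversed, so that the negative shifts $-|p_i^{v_s}|$ (and hence the exponents of $x^{-1}$) appear correctly in $d_s$; a sign slip here would produce $x^{+|p_i^{v_s}|}$ instead. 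Everything else is a direct invocation of the cited results.
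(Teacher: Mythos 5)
Your proposal is correct and follows essentially the same route as the paper's proof: decompose $\LL(E)$ via Theorem~\ref{polyheadb}, reduce to each matrix summand, apply the graded Morita equivalence of Proposition~\ref{grmorita} to land in $K_0^{\gr}(K)$, and conclude with Proposition~\ref{k0grof}, tracking the order-unit to $[K(-|p^{v_s}_1|)\oplus\dots\oplus K(-|p^{v_s}_{n(v_s)}|)]\mapsto d_s$. Your extra care with the sign of the suspension in $A^{n(v_s)}(-\overline{p_s})$ and the explicit remark on cones only make explicit what the paper leaves implicit.
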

\begin{proof}
By~\cite[Theorem~4.11]{haz} (see also Theorem~\ref{polyheadb}, in the absence of comet and rose graphs),
\begin{equation}\label{skyone23}
\LL(E) \cong_{\gr} \bigoplus_{s=1}^t \M_{n(v_s)} (K)\big (|p^{v_s}_1|,\dots, |p^{v_s}_{n(v_s)}|\big).
\end{equation}
Since $K_0^{\gr}$ is an exact functor, $K_0^{\gr}(\LL(E))$ is
isomorphic to the direct sum of $K_0^{\gr}$ of matrix algebras of
~(\ref{skyone23}).  The graded Morita equivalence
(Proposition~\ref{grmorita}), induces a $\mathbb Z[x,x^{-1}]$-isomorphism 
\[K_0^{\gr}\big
(\M_{n(v_s)} (K)\big (|p^{v_s}_1|,\dots, |p^{v_s}_{n(v_s)}|\big)
\big)\longrightarrow K_0^{\gr}(K),\] with \[\big [\M_{n(v_s)} (K)\big
(|p^{v_s}_1|,\dots, |p^{v_s}_{n(v_s)}|\big)\big ] \mapsto
[K(-|p^{v_s}_1|)\oplus \dots,\oplus K( -|p^{v_s}_{n(v_s)}|)].\] Now the
theorem follows from Proposition~\ref{k0grof}, by considering $A=K$ as a graded field concentrated in degree zero, i.e., $\Gamma_A=0$, and $\Gamma=\mathbb Z$ (see Example~\ref{upst}). 
\end{proof}


Leavitt path algebras of $C_n$-comets were classified in
\cite[Theorem~4.17]{haz} as follows: Let $E$ be a
$C_n$-comet with the cycle $C$ of length $n \geq 1$. Let $u$ be a
vertex on the cycle $C$. Eliminate the edge in the cycle whose
source is $u$ and consider the set $\{p_i \mid 1\leq i \leq m\}$  of
all paths which end in $u$. Then
\begin{equation}\label{ytsnf}
\LL(E) \cong_{\gr} \M_{m}\big(K[x^n,x^{-n}] \big)\big (|p_1|,\dots,
|p_m|\big).
\end{equation}

Let $d_l$, $0 \leq l \leq n-1$, be the number of $i$ such that
$|p_i|$ represents $\overline l$ in $\mathbb Z/n \mathbb Z$. Then
\begin{equation}\label{ytsnfpos}
\LL(E) \cong_{\gr} \M_{m}\big(K[x^n,x^{-n}] \big)\big (0,\dots,0,1,\dots,1,\dots,
n-1,\dots,n-1\big),
\end{equation}
where $0\leq l \leq n-1$ occurring $d_l$ times. It is now easy to see
\begin{equation}\label{zgdak}
\LL(E)_0 \cong \M_{d_0}(K)\times \dots \times \M_{d_{n-1}}(K).
\end{equation}

Furthermore, let $F$ be another $C_{n'}$-comet with the cycle $C'$
of length $n' \geq 1$ and $u'$ be a vertex on the cycle $C'$.
Eliminate the edge in the cycle whose source is $u'$ and consider
the set $\{q_i \mid 1\leq i \leq m'\}$  of all paths which end in
$u'$. Then $\LL(E) \cong_{\gr}\LL(F)$ if and only if $n=n'$,
$m=m'$ and for a suitable permutation  $\pi \in S_m$, and $r \in
\mathbb N$,  $r+|p_i|+n\mathbb Z=|q_{\pi(i)}|+n \mathbb Z$, where
$1\leq i \leq m$.

Before determining the $\mathbb Z[x,x^{-1}]$-module structure of the graded $K_0$ of a $C_n$-comet graph, we define a $\mathbb Z[x,x^{-1}]$-module structure on the group 
$\bigoplus_n \mathbb Z$.  Let $\phi: \mathbb Z[x,x^{-1}] \longrightarrow \M_n(\mathbb Z)=\End_{\mathbb Z}(\mathbb Z^n)$, be the evaluation ring homomorphism, where 
\[\phi(x)=\left(\begin{array}{ccccc}
0 & \dots & 0 & 0 & 1 \\
1 & 0 & \dots  & \dots & 0\\
0 & 1 & 0 & \dots & 0 \\
\vdots & 0  & \ddots
  & 0 & 0\\
0 & \dots & 0 & 1 & 0
\end{array}\right).\] This homomorphism induces a $\mathbb Z[x,x^{-1}]$-module structure on $\bigoplus_n \mathbb Z$, where 
$x (a_1,\dots,a_n)=(a_n,a_1,\dots, a_{n-1})$. 

\begin{theorem}[$K_0^{\gr}$ of $C_n$-comet graphs]\label{cothemp}
Let $E$  be a $C_n$-comet with the cycle $C$ of length $n \geq 1$.
Then there is a  $\mathbb Z[x,x^{-1}]$-module isomorphism
\begin{equation}\label{ytsnf1}
\Big ( K_0^{\gr} (\LL(E)),  [\LL(E)] \Big ) \cong \Big (
\bigoplus_{i=0}^{n-1} \mathbb Z, (d_0,\dots,d_{n-1}) \Big ),
\end{equation}
where $d_l$, $0 \leq l \leq n-1$, are as in
{\upshape~(\ref{ytsnfpos})}.
\end{theorem}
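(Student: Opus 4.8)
The plan is to follow exactly the pattern of the proof of Theorem~\ref{f4j5h6h8}, with the trivially graded field $K$ replaced by the graded field $A=K[x^n,x^{-n}]$. First I would invoke the presentation~(\ref{ytsnfpos}), which gives a graded isomorphism
\[
\LL(E) \cong_{\gr} \M_m\big(K[x^n,x^{-n}]\big)(\ol\delta), \qquad \ol\delta=(0,\dots,0,1,\dots,1,\dots,n-1,\dots,n-1),
\]
where the residue $l$ occurs $d_l$ times. Since $K_0^{\gr}$ is additive, it suffices to analyse this single graded matrix algebra over the graded field $A=K[x^n,x^{-n}]$, whose support is $\Gamma_A=n\mathbb Z$.

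Next I would apply the graded Morita equivalence of Proposition~\ref{grmorita}. The functor there commutes with the suspensions $\mathcal T_\alpha$, so it induces a $\mathbb Z[x,x^{-1}]$-module isomorphism $K_0^{\gr}\big(\M_m(A)(\ol\delta)\big)\cong K_0^{\gr}(A)$ under which the order-unit $[\LL(E)]$ is carried to the class of $\bigoplus_{i=1}^m A(-|p_i|)$, just as in Theorem~\ref{f4j5h6h8}. By Proposition~\ref{k0grof}, applied with $\Gamma=\mathbb Z$ and $\Gamma_A=n\mathbb Z$, one has $K_0^{\gr}(A)\cong \mathbb Z[\mathbb Z/n\mathbb Z]\cong\bigoplus_{i=0}^{n-1}\mathbb Z$, and Example~\ref{upst}(2) identifies the $\mathbb Z[x,x^{-1}]$-action with the cyclic shift $x(a_1,\dots,a_n)=(a_n,a_1,\dots,a_{n-1})$ fixed immediately before the statement. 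Reading off the image of the order-unit then amounts to collecting the exponents $|p_i|$ according to their residue class modulo $n$: each $|p_i|$ contributes to the coordinate indexed by $|p_i|\bmod n$, and by definition exactly $d_l$ of them fall in class $l$, which produces the tuple $(d_0,\dots,d_{n-1})$ asserted in~(\ref{ytsnf1}).

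The step demanding the most care is the precise bookkeeping of the order-unit together with the cyclic action, i.e.\ checking that the isomorphism above matches $[\LL(E)]$ with $(d_0,\dots,d_{n-1})$ and not with a permuted or reflected variant. Because the passage from $K_0^{\gr}(A)$ to $\bigoplus_n\mathbb Z$ reduces the suspensions modulo $x^n-1$, the slot into which each $d_l$ falls depends on the chosen coset representatives, so one must keep the identification consistent with Example~\ref{upst}(2). As a cross-check, and indeed an alternative route: a $C_n$-comet has no sink, so $\LL(E)$ is strongly graded by Theorem~\ref{hazst}, and Dade's isomorphism~(\ref{dade}) gives $K_0^{\gr}(\LL(E))\cong K_0(\LL(E)_0)$. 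Combined with the decomposition~(\ref{zgdak}), namely $\LL(E)_0\cong \M_{d_0}(K)\times\dots\times\M_{d_{n-1}}(K)$, this yields $\bigoplus_{i=0}^{n-1}\mathbb Z$ with order-unit $(d_0,\dots,d_{n-1})$ directly; one then verifies that the shift functor $M\mapsto M\otimes_{A_0}A_1$ on $\mbox{mod-}A_0$ cyclically permutes the matrix factors, reproducing the action of Example~\ref{upst}(2) and confirming the claimed $\mathbb Z[x,x^{-1}]$-module structure.
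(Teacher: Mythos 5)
Your main argument is precisely the paper's proof: invoke the structure theorem~(\ref{ytsnf})/(\ref{ytsnfpos}) for $C_n$-comets, transfer to $K_0^{\gr}\big(K[x^n,x^{-n}]\big)$ via the graded Morita equivalence of Proposition~\ref{grmorita} (which sends $[\LL(E)]$ to $\big[\bigoplus_{i=1}^m K[x^n,x^{-n}](-|p_i|)\big]$), and finish with Proposition~\ref{k0grof} applied to the graded field with support $n\mathbb Z$, together with the action described in Example~\ref{upst}(2). The Dade-theorem cross-check through~(\ref{zgdak}) is an extra consistency verification that the paper does not include for this theorem, but your core route and every key lemma coincide with the paper's proof.
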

\begin{proof}
By~(\ref{ytsnf}), $K^{\gr}_0(\LL(E)) \cong_{\gr} K^{\gr}_0\big( \M_{m}\big(K[x^n,x^{-n}] \big)\big (|p_1|,\dots,
|p_m|\big)\big).$
The graded Morita equivalence
(Proposition~\ref{grmorita}), induces a $\mathbb Z[x,x^{-1}]$-isomorphism 
\[K_0^{\gr}\big
(\M_{m}\big(K[x^n,x^{-n}] \big)\big (|p_1|,\dots,
|p_m|\big)\big) \longrightarrow K_0^{\gr}(K[x^n,x^{-n}]),\] with 
\[\big [\M_{m}\big(K[x^n,x^{-n}] \big)\big (|p_1|,\dots,
|p_m|\big)\big ] \mapsto
\big [K[x^n,x^{-n}](-|p_1|)\oplus \dots,\oplus K[x^n,x^{-n}]( -|p_m|)\big].\] Now the
theorem follows from Proposition~\ref{k0grof}, by considering $A=K[x^n,x^{-n}]$ as a graded field with  $\Gamma_A=n\mathbb Z$ and $\Gamma=\mathbb Z$ (see Example~\ref{upst}). 
\end{proof}


Before determining the $\mathbb Z[x,x^{-1}]$-module structure of the graded $K_0$ of multi-headed rose graphs, we define a $\mathbb Z[x,x^{-1}]$-module structure on the group $\mathbb Z[1/n]$. Consider the evaluation ring homomorphism $\phi:\mathbb Z[x,x^{-1}] \rightarrow \mathbb Z[1/n]$, by $\phi(x)=n$. This gives a $\mathbb Z[x,x^{-1}]$-module structure on $\mathbb Z[1/n]$, where $xa=na$, $a \in \mathbb Z[1/n]$. 

\begin{theorem}[$K_0^{\gr}$ of multi-headed rose graphs]\label{re282}
Let   $E$ be a polycephaly graph consisting of an acyclic graph $E_1$ with sinks $\{v_1,\dots,v_t\}$ which are attached to $L_{n_1},\dots,L_{n_t}$, respectively, where $n_s\geq 1$, $1\leq s \leq t$. 
For any  $v_s$,  let $\{p^{v_s}_i \mid 1\leq i \leq n(v_s)\}$ be the set of all paths in $E_1$ which end in $v_s$.  Then there is a  $\mathbb Z[x,x^{-1}]$-module isomorphism
\begin{equation}\label{dampai}
\big(K_0^{\gr}(\LL(E)),[\LL(E)]\big)  \cong \big ( \bigoplus_{s=1}^t \mathbb Z[1/n_s], (d_1,\dots,d_t)\big),
\end{equation}
where $d_s=\sum_{i=1}^{n(v_s)}n_s^{-|p_i^{v_s}|}$.
\end{theorem}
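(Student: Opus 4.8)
The plan is to follow the template of the proofs of Theorems~\ref{f4j5h6h8} and~\ref{cothemp}, replacing the graded field by the Leavitt algebra $\LL_K(1,n_s)$. Since $E$ contains no comets and every rose has $n_s\geq 1$ petals, Theorem~\ref{polyheadb} gives a graded isomorphism
\[
\LL_K(E)\cong_{\gr}\bigoplus_{s=1}^{t}\M_{n(v_s)}\big(\LL_K(1,n_s)\big)(\ol{p_s}),\qquad \ol{p_s}=\big(|p_1^{v_s}|,\dots,|p_{n(v_s)}^{v_s}|\big).
\]
As $K_0^{\gr}$ is exact it splits this as a direct sum, so it suffices to treat each factor. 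The graded Morita equivalence of Proposition~\ref{grmorita} then yields a $\mathbb Z[x,x^{-1}]$-module isomorphism $K_0^{\gr}\big(\M_{n(v_s)}(\LL_K(1,n_s))(\ol{p_s})\big)\cong K_0^{\gr}(\LL_K(1,n_s))$ sending the class of the identity to $\sum_{i=1}^{n(v_s)}\big[\LL_K(1,n_s)(-|p_i^{v_s}|)\big]$, exactly as in the two preceding proofs.

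The heart of the argument — and the place where this case genuinely differs from the acyclic and comet cases — is the computation of $K_0^{\gr}(\LL_K(1,n))$ as a $\mathbb Z[x,x^{-1}]$-module, since $\LL_K(1,n)$ is not a graded field and Proposition~\ref{k0grof} is unavailable. The rose $L_n$ has a single vertex and no sink, so $\LL_K(1,n)$ is strongly graded by Theorem~\ref{hazst}, and its degree-zero ring is the ultramatricial algebra $\varinjlim_m \M_{n^m}(K)$ with transition maps the $n$-fold diagonal embeddings. Hence, by Dade's theorem (\S\ref{dadestmal}) — equivalently, directly from Theorem~\ref{ultrad}(\ref{imip}) with $E=L_n$ and $N=(n)$ — the underlying group is $\varinjlim(\mathbb Z\xrightarrow{\,n\,}\mathbb Z\xrightarrow{\,n\,}\cdots)\cong\mathbb Z[1/n]$, with $[\LL_K(1,n)]$ corresponding to $1$.

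To identify the $\mathbb Z[x,x^{-1}]$-action I would read it off the Leavitt relations. Writing $A=\LL_K(1,n)$ with loops $y_i$ of degree $1$ and ghost loops $x_i$ of degree $-1$, the idempotents $y_ix_i$ are orthogonal and sum to $1$, so $A=\bigoplus_{i=1}^{n}y_iA$, while left multiplication by $y_i$ gives a degree-zero graded isomorphism $A(-1)\cong_{\gr}y_iA$. Thus $A\cong_{\gr}A(-1)^{n}$, equivalently $A(1)\cong_{\gr}A^{n}$, so $x[A]=[A(1)]=n[A]$. Since $[A]$ corresponds to the generator $1$ of $\mathbb Z[1/n]$, the element $x$ acts as multiplication by $n$; this is precisely the module structure fixed before the theorem via $\phi(x)=n$.

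Assembling the pieces, $[\LL_K(1,n_s)(-|p_i^{v_s}|)]=x^{-|p_i^{v_s}|}[\LL_K(1,n_s)]=n_s^{-|p_i^{v_s}|}$ in $\mathbb Z[1/n_s]$, so the factor coming from $v_s$ contributes $\sum_{i=1}^{n(v_s)}n_s^{-|p_i^{v_s}|}=d_s$. Therefore the composite isomorphism sends $[\LL_K(E)]$ to $(d_1,\dots,d_t)$, which is the prescribed order-unit, completing the proof. The only real obstacle is the module structure on the non-field base $\LL_K(1,n)$; I expect the direct computation $A\cong_{\gr}A(-1)^{n}$ to dispose of it cleanly, and the order-unit bookkeeping to be routine once the preceding two theorems are in hand.
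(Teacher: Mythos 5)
Your proposal is correct and follows essentially the same route as the paper: decompose via Theorem~\ref{polyheadb}, use exactness of $K_0^{\gr}$ and the graded Morita equivalence of Proposition~\ref{grmorita}, identify $K_0^{\gr}(\LL(1,n_s))\cong\mathbb Z[1/n_s]$ via Theorem~\ref{ultrad}(\ref{imip}), and track the order-unit using $\LL(1,n_s)(i)\cong\bigoplus_{n_s}\LL(1,n_s)(i-1)$. The only difference is that you derive this last relation (and hence the action of $x$ as multiplication by $n_s$) explicitly from the Leavitt relations via the idempotents $y_ix_i$, a fact the paper simply asserts.
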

\begin{proof} By Theorem~\ref{polyheadb} (in the absence of acyclic and comet graphs with cycles of length greater than one), 
\begin{equation}\label{dampai2}
\LL(E) \cong_{\gr} \bigoplus_{s=1}^t \M_{n(v_s)} \big(\LL(1,n_s)\big)\big (|p^{v_s}_1|,\dots, |p^{v_s}_{n(v_s)}|\big).
\end{equation}
Since $K_0^{\gr}$ is an exact functor, $K_0^{\gr}(\LL(E))$ is
isomorphic to the direct sum of $K_0^{\gr}$ of matrix algebras of
~(\ref{dampai2}).  The graded Morita equivalence
(Proposition~\ref{grmorita}), induces a $\mathbb Z[x,x^{-1}]$-isomorphism \[K_0^{\gr}\big
(\M_{n(v_s)} (\LL(1,n_s))\big (|p^{v_s}_1|,\dots, |p^{v_s}_{n(v_s)}|\big)
\big)\longrightarrow K_0^{\gr}(\LL(1,n_s)),\] with $\big [\M_{n(v_s)} (\LL(1,n_s))\big
(|p^{v_s}_1|,\dots, |p^{v_s}_{n(v_s)}|\big)\big ] \mapsto
[\LL(1,n_s)^{n(v_s)}(-|p^{v_s}_1|,\dots,- |p^{v_s}_{n(v_s)}|)]$. Now by Theorem~\ref{ultrad}(\ref{imip}),
$K_0^{\gr}(\LL(1,n_s))\cong \mathbb Z[1/n_s]$. Observe that $[\LL(1,n_s)]$ represents $1$ in  $K_0^{\gr}(\LL(1,n_s))$.
Since for any $i \in \mathbb Z$, 
\[\LL(1,n_s)(i)\cong \bigoplus_{n_s}\LL(1,n_s)(i-1),\]  it immediately follows that  $\big [\LL(1,n_s)^{n(v_s)}(-|p^{v_s}_1|,\dots,- |p^{v_s}_{n(v_s)}|)\big]$ maps to  $\sum_{i=1}^{n(v_s)}n_s^{-|p_i^{v_s}|}$ in $\mathbb Z[1/n_s]$.
This also shows that the isomorphism is a $\mathbb Z[x,x^{-1}]$-module isomorphism. 
\end{proof}

\begin{theorem}\label{mani543}
Let $E$ and $F$ be polycephaly graphs. Then $\LL(E)\cong_{\gr} \LL(F)$ if and only if there is an order preserving $\mathbb Z[x,x^{-1}]$-module isomorphism
\[\big (K_0^{\gr}(\LL(E)),[\LL(E)]\big ) \cong \big (K_0^{\gr}(\LL(F)),[\LL(F)]\big ).\] \end{theorem}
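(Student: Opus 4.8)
The forward implication is immediate: $K_0^{\gr}$ is a functor into $\mathcal P$ that sends graded finitely generated projectives into the positive cone, sends $[\LL(E)]$ to the order-unit, and commutes with the suspension functor (hence respects the $\mathbb Z[x,x^{-1}]$-action). So any graded isomorphism $\LL(E)\cong_{\gr}\LL(F)$ induces an order-preserving $\mathbb Z[x,x^{-1}]$-module isomorphism carrying $[\LL(E)]$ to $[\LL(F)]$. The substance is the converse. The plan is to start from the structure theorem, Theorem~\ref{polyheadb}, writing each algebra as a direct sum of three kinds of blocks: matrices over $K$ (the acyclic part), matrices over $K[x^{l},x^{-l}]$ (a comet of length $l$), and matrices over $\LL(1,n)$ (a rose with $n$ petals), using the convention explained after Definition~\ref{popyt} that length-one cycles are counted among the comets so that every genuine rose has $n\ge 2$. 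Accordingly $K_0^{\gr}(\LL(E))$ splits as a direct sum whose summands, by Theorems~\ref{f4j5h6h8}, \ref{cothemp} and \ref{re282}, are copies of $\mathbb Z[x,x^{-1}]$ (free, acyclic), of $\mathbb Z[x,x^{-1}]/(x^{l}-1)$ (comet), and of $\mathbb Z[x,x^{-1}]/(x-n)\cong\mathbb Z[1/n]$ (rose), and likewise for $F$.

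The first key step is to show that the given isomorphism $f$ respects this trichotomy, which is a purely module-theoretic matter governed by the $\mathbb Z[x,x^{-1}]$-annihilators. The acyclic summands form the torsion-free (indeed free) part, recovered as $M/\mathrm{tors}(M)$; the rose-$n$ summands are the $(x-n)$-primary component (note that $(x-n)$ is prime in $\mathbb Z[x,x^{-1}]$, that distinct integers $n\ge 2$ give distinct such primes, and that each $x-n$ acts injectively on every comet and on every other rose block since $n^{l}-1\ne 0$ and $m-n\ne 0$, so these primes are disjoint from the cyclotomic primes carried by the comet blocks); and the comet blocks constitute the remaining, cyclotomically supported torsion. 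As all three are canonical submodules, preserved by any $\mathbb Z[x,x^{-1}]$-module isomorphism, $f$ carries each isotypic piece of $\LL(E)$ onto the corresponding piece of $\LL(F)$; and because the cone of $K_0^{\gr}$ of a finite product is the product of the cones, each restriction stays order-preserving and matches the relevant component of the order-unit. Within the comet part one further separates by length: the submodule annihilated by $x^{l}-1$ is $\bigoplus_{l'\mid l}$(length-$l'$ comets), so a M\"obius inversion over the divisibility poset recovers the multiplicity of each length as a module invariant that $f$ preserves.

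With the pieces matched I would finish block by block. For the acyclic part (matricial over the graded field $K$) and, for each fixed $l$, the length-$l$ comet part (matricial over the graded field $K[x^{l},x^{-l}]$), the restriction of $f$ is an order-preserving $\mathbb Z[x,x^{-1}]$-module isomorphism carrying order-unit to order-unit, so fullness in Theorem~\ref{catgrhsf} lifts it directly to a graded $K$-algebra isomorphism of those blocks (no hand-matching of individual summands is needed, since Theorem~\ref{catgrhsf} supplies it). The rose blocks fall outside the scope of Theorem~\ref{catgrhsf}, so for each fixed $n$ I would argue by hand: an order-preserving $\mathbb Z[x,x^{-1}]$-module isomorphism $\mathbb Z[1/n]^{a}\to\mathbb Z[1/n]^{b}$ is $\mathbb Z[1/n]$-linear (as $x$ acts as multiplication by $n$ on both sides), hence is given by a matrix over $\mathbb Z[1/n]$ whose inverse is again entrywise nonnegative; such a matrix is monomial, a permutation $\sigma$ composed with scalings by the positive units $n^{k}$, forcing $a=b$. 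Matching the order-units then yields $d'_{\sigma(i)}=n^{k_i}d_i$ for each $i$, and graded Morita equivalence (Proposition~\ref{grmorita}) together with the number-theoretic criterion of Corollary~\ref{k43shab} turns each such relation into a graded isomorphism of the corresponding rose blocks. Taking the direct sum of all the block isomorphisms gives $\LL(E)\cong_{\gr}\LL(F)$.

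The main obstacle is the separation step: one must be certain that the $\mathbb Z[x,x^{-1}]$-module structure, sharpened by the order, genuinely recovers the full combinatorial data — the multiset of cycle lengths, the multiset of petal numbers, and the order-unit in each summand — so that $f$ really decomposes as a direct sum of isomorphisms between like blocks. Once this bookkeeping is secured, the acyclic and comet cases are handed verbatim to Theorem~\ref{catgrhsf}, and only the rose case needs the additional monomial-matrix and Corollary~\ref{k43shab} argument.
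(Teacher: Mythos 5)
Your forward direction and your overall architecture (decompose via Theorem~\ref{polyheadb}, separate the three types of summands, then treat the acyclic and comet blocks with Theorem~\ref{catgrhsf} and the rose blocks by a monomial-matrix argument) do match the paper's proof. But there is a genuine gap at the separation step, precisely where you declare it "purely module-theoretic". Two of your three pieces are indeed canonical: the comet part is the set of elements killed by some $x^m-1$, and the rose-$n$ part is the $(x-n)$-torsion. However the acyclic part is \emph{not} a canonical submodule (it is only a canonical quotient), and, more damagingly, the individual length-$l$ comet parts are not canonical either. What is canonical is the nested family $\ker(x^l-1)$, and this is not $\bigoplus_{l'\mid l}(\text{length-}l'\text{ comets})$: a comet of length $l_s$ contributes a rank-$\gcd(l,l_s)$ sublattice to $\ker(x^l-1)$ (e.g.\ the diagonal of a length-$2$ comet lies in $\ker(x-1)$). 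Concretely, take $M=\mathbb Z u\oplus(\mathbb Z v_1\oplus \mathbb Z v_2)$ with $xu=u$, $xv_1=v_2$, $xv_2=v_1$ (one comet of length $1$, one of length $2$). The map $u\mapsto u+v_1+v_2$, $v_i\mapsto v_i$ is a $\mathbb Z[x,x^{-1}]$-module automorphism that mixes the length-$1$ block into the length-$2$ block; it is excluded only by order considerations, since its inverse sends $u$ to $u-v_1-v_2$, which is not positive. So your M\"obius inversion recovers the multiset of cycle lengths as a numerical invariant, but it does not show that "the restriction of $f$ to the length-$l$ comet part" is even defined, which is exactly what you must feed into Theorem~\ref{catgrhsf}. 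The same defect infects the order-units: because the acyclic part is not canonical, you do not know that $f$ of an acyclic generator has vanishing comet and rose components; if it did not, the restrictions of $f$ to the rose-$n$ and comet parts would fail to carry the corresponding component of $[\LL(E)]$ to that of $[\LL(F)]$, and your step "matching the order-units yields $d'_{\sigma(i)}=n^{k_i}d_i$" collapses.

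The needed statements are true, but their proof requires the order structure, and this is where the paper does its work. For the acyclic part: writing $f(e)=(b,c,d)$ for a positive acyclic generator $e$, the element $(0,c,d)$ is positive and lies in the image of the (canonical) comet-plus-rose part, so $q=f^{-1}\big((0,c,d)\big)$ is a positive element of the comet-plus-rose part of the source; then $f(e-q)=(b,0,0)$ is positive, hence $e-q\geq 0$, and since the cone of a direct sum is the product of the cones this forces $-q\geq 0$, i.e.\ $q=0$. This is what makes $f$ genuinely block-diagonal across the three types and makes all order-unit components correspond. For the comet part: positivity of $f$ and $f^{-1}$ with respect to the basis cone $\mathbb N^r$ forces the matrix of $f$ to be a permutation matrix, and commutation with the $x$-action (a product of cyclic permutations of the basis vectors) forces this permutation to carry each length-$l_s$ cycle onto a cycle of equal length; this simultaneously matches the lengths, makes $f$ block-diagonal by length, and matches the order-unit entries blockwise. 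Once these positivity arguments are inserted, the rest of your plan goes through and is essentially the paper's route: Theorem~\ref{catgrhsf} per graded field for the acyclic and comet blocks, and the monomial-matrix analysis for the roses, where the paper establishes the block isomorphism directly via Dade's theorem and unit-regular cancellation rather than citing Corollary~\ref{k43shab}, and where your claim that the positive units of $\mathbb Z[1/n]$ are the powers $n^k$ requires $n$ prime (a caveat the paper's own treatment shares).
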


\begin{proof}
One direction is straightforward.  For the other direction, suppose there is an
order-preserving $\mathbb Z[x,x^{-1}]$-module isomorphism
$\phi: K_0^{\gr}(\LL(E)) \rightarrow K_0^{\gr}(\LL(F))$ such that
$\phi([\LL(E)])=[\LL(F)]$. 

By Theorem~\ref{polyheadb}, $\LL(E)$ is a direct sum of matrices over the field $K$, the Laurent polynomials $K[x^l,x^{-l}]$, $l \in \mathbb N$,  and Leavitt algebras, corresponding to acyclic, comets and rose graphs in $E$, respectively (see Remark~\ref{kjsdyb}). 
Since $K_0^{\gr}$ is an exact functor, by Theorems~\ref{f4j5h6h8},~\ref{cothemp} and ~\ref{re282}, there is an order-preserving $\mathbb Z[x,x^{-1}]$-isomorphism
\[ K_0^{\gr}(\LL(E))\cong \Big ( \bigoplus_k \mathbb Z[x,x^{-1}] \Big ) \Big ( \bigoplus_h \bigoplus_{l_h} \mathbb Z  \Big ) 
\Big (  \bigoplus_t \mathbb Z [1/n_t] \Big ).  \]  Similarly for the graph $F$ we have 
\[ K_0^{\gr}(\LL(F))\cong \Big ( \bigoplus_{k'} \mathbb Z[x,x^{-1}] \Big ) \Big ( \bigoplus_{h'} \bigoplus_{l'_{h'}} \mathbb Z  \Big ) 
\Big (  \bigoplus_{t'} \mathbb Z [1/n'_{t'}] \Big ).  \]  
Throughout the proof, we consider a cycle of length one in the graphs as a comet and thus all $n_s>1$, $1\leq s \leq t$ and $n'_s>1$, $1\leq s \leq t'$.

{\it Claim:} The isomorphism $\phi$ maps each type of groups in $K_0^{\gr}(\LL(E))$ to its corresponding type in $K_0^{\gr}(\LL(F))$ and so 
the restriction of $\phi$ to each type of groups gives an isomorphism, i.e, $\phi: \bigoplus_k \mathbb Z[x,x^{-1}] \rightarrow \bigoplus_{k'} \mathbb Z[x,x^{-1}]$, $\phi:  \bigoplus_h \bigoplus_{l_h} \mathbb Z \rightarrow  \bigoplus_{h'} \bigoplus_{l'_{h'}} \mathbb Z$ and 
$\phi: \bigoplus_t \mathbb Z [1/n_t]  \rightarrow  \bigoplus_{t'} \mathbb Z [1/n'_{t'}]$  are order-preserving $\mathbb Z[x,x^{-1}]$-isomorphisms. 

Let $a \in \mathbb Z[1/n_s]$, for some $1\leq s \leq t$ (note that $n_s>1$) and suppose \[\phi(0,\dots,0,a,0\dots,0)=(b_1,\dots,b_{k'},c_1,\dots,c_{h'},d_1,\dots d_{t'}),\] where $(b_1,\dots,b_{k'}) \in \bigoplus_{k'} \mathbb Z[x,x^{-1}]$, $(c_1,\dots,c_{h'}) \in  \bigoplus_{h'} \bigoplus_{l'_{h'}} \mathbb Z$ and 
$(d_1,\dots d_{t'}) \in \bigoplus_{t'} \mathbb Z [1/n'_{t'}]$. We show that $b_i$'s and $c_i$'s are all zero. Let $m=\LCM(l'_{s})$, $1\leq s \leq h'$. Recall that the action of $x$ on $a\in  \mathbb Z[1/n_s]$, $x a= n_s a$. Also the action of $x$ on $(a_1,\dots,a_l) \in \bigoplus_l \mathbb Z$, is $x (a_1,\dots,a_l)=(a_l,a_{1},\dots,a_{l-1})$ (see Example~\ref{upst}). Consider
\begin{multline*}
(n_s^m b_1,\dots, n_s^m b_{k'}, n_s^m c_1,\dots,n_s^m c_{h'},n_s^m d_1,\dots n_s^m d_{t'})= \\ \phi (0,\dots,0,n_s^m a,0\dots,0)=
\phi (0,\dots,0,x^m a,0\dots,0)=x^m \phi (0,\dots,0, a,0\dots,0)=\\(x^m b_1,\dots, x^m b_{k'},  c_1,\dots, c_{h'},{n'_1}^m d_1,\dots {n'_{t'}}^m d_{t'}).
\end{multline*}
This immediately implies that $b_i$'s and $c_i$'s are all zero. Thus the restriction of $\phi$ on the Leavitt type algebras, gives the isomorphism  \begin{equation}\label{lkara}
\phi: \bigoplus_t \mathbb Z [1/n_t]  \longrightarrow  \bigoplus_{t'} \mathbb Z [1/n'_{t'}]. 
\end{equation}

Now consider $a \in \mathbb \bigoplus_{l_s} \mathbb Z$, for some $1\leq s \leq h$, and  suppose \[\phi(0,\dots,0,a,0\dots,0)=(b_1,\dots,b_{k'},c_1,\dots,c_{h'},d_1,\dots d_{t'}).\] As above, let  $m=\LCM(l'_{s})$, $1\leq s \leq h'$ (note that $m$ could be 1) and consider
\begin{multline*}
(b_1,\dots, b_{k'},  c_1,\dots, c_{h'}, d_1,\dots d_{t'})= \\ \phi (0,\dots,0, a,0\dots,0)=
\phi (0,\dots,0,x^m a,0\dots,0)=x^m \phi (0,\dots,0, a,0\dots,0)=\\(x^m b_1,\dots, x^m b_{k'},  c_1,\dots, c_{h'},{n'_1}^m d_1,\dots {n'_{t'}}^m d_{t'}).
\end{multline*}
This immediately implies that $b_i$'s and $d_i$'s are all zero. Thus the restriction of $\phi$ on the comet types, gives the isomorphism 
\begin{equation}\label{lkara2}
\phi:  \bigoplus_h \bigoplus_{l_h} \mathbb Z \longrightarrow  \bigoplus_{h'} \bigoplus_{l'_{h'}} \mathbb Z.
\end{equation} 

Finally we show that $\phi$ induces  an isomorphism on acyclic parts. Let \[\phi(1,0,\dots,0)=(b_1,\dots, b_{k'},  c_1,\dots, c_{h'}, d_1,\dots d_{t'}).\] Since $(1,0,\dots,0)$ is in the positive cone of $K_0^{\gr}(\LL(E))$, 
$(b_1,\dots, b_{k'},  c_1,\dots, c_{h'}, d_1,\dots d_{t'})$ is in the positive cone of $K_0^{\gr}(\LL(F))$.
Since the restriction of $\phi$ on Leavitt and comet types are isomorphisms (see~(\ref{lkara}) and (\ref{lkara2})), there is $(0,\dots,0,c_1',\dots,c_h',d_1',\dots,d_t')$ in the positive cone of $K_0^{\gr}(\LL(E))$ such that 
$\phi(0,\dots,0,c_1',\dots,c_h',d_1',\dots,d_t')=(0,\dots, 0,  c_1,\dots, c_{h'}, d_1,\dots d_{t'})$. It follows 
\[\phi(1,0,\dots,0, -c_1',\dots,-c_h',-d_1',\dots,-d_t')=(b_1,\dots, b_{k'},0,\dots,0, 0,\dots,0).\] But $(b_1,\dots, b_{k'},0,\dots,0, 0,\dots,0)$ is in the positive cone. This forces $c_i'$'s and $d_i'$'s to be zero. This shows that $\phi(1,0,\dots,0) \in  \bigoplus_{k'} \mathbb Z[x,x^{-1}]$ and thus $\phi\big(\mathbb Z[x,x^{-1}],0,\dots,0\big)\subseteq \bigoplus_{k'} \mathbb Z[x,x^{-1}].$ A similar argument can be written for the other components of acyclic parts. Thus the restriction of $\phi$ on the acyclic types, gives the isomorphism  
\[\phi: \bigoplus_k \mathbb Z[x,x^{-1}] \rightarrow \bigoplus_{k'} \mathbb Z[x,x^{-1}].\]
This finishes the proof of the claim. Thus we can reduce the theorem to graphs of the same types, namely, $E$ and $F$ being both acyclic, multi-headed comets or multi-headed rose graphs.

We consider each case separately. 

(i) $E$ and $F$ are acyclic: In this case $\LL(E)$ and $\LL(F)$ are $K$-algebras (see~(\ref{skyone23})). The isomorphism between Leavitt path algebras now follows  from Theorem~\ref{catgrhsf}, by setting $A=K$ and $\Gamma_A=0$. 

(ii) $E$ and $F$ are multi-headed comets: By Theorem~\ref{polyheadb} and ~\ref{cothemp}, (since $K_0^{\gr}$ is an exact functor) $K_0^{\gr} (\LL(E)) \cong \bigoplus_{s=1}^h \bigoplus_{l_s} \mathbb Z$ and 
$K_0^{\gr} (\LL(F))\cong \bigoplus_{s=1}^{h'} \bigoplus_{l'_{s}} \mathbb Z$. Tensoring with $\mathbb Q$, the isomorphism $\phi$  extends to an isomorphism of $\mathbb Q$-vector spaces, which then immediately implies
$\sum_{s=1}^h l_s=\sum_{i=1}^{h'} l'_s$. We claim that $h=h'$ and (after a suitable permutation), $l_s=l'_s$ for any $1\leq s\leq h$ and the restriction of $\phi$ gives an isomorphism $\phi: \bigoplus_{l_s} \mathbb Z \rightarrow \bigoplus_{l'_s} \mathbb Z$, $1\leq s\leq h$. 

Let $r=\sum_{s=1}^h l_s=\sum_{i=1}^{h'} l'_s$. Consider the matrix $a=(a_{ij})$, $1\leq i,j \leq r$, representing the isomorphism $\phi$.
We first show that $a$ has to be a permutation matrix.  Since $\phi$ is an order preserving homomorphism, all the entires of the matrix $a$ are positive integers.  Since $\phi$ is an isomorphism, $a$ is an invertible matrix, with the inverse $b=(b_{ij})$ with $b_{ij} \geq 0$, $1\leq i,j \leq r$, as $b$ is also order preserving. Thus there is $1\leq j\leq r$ such that $a_{1j}\not = 0$.  Since $\sum_{i=1}^{r}a_{1i} b_{il}=0$, for $1<l\leq r$, $a_{1j}\not = 0$ and all the numbers involved are positive, it follows $b_{jl}=0$ for all $1<l \leq r$. That is the entires of $j$-th row of the matrix $b$ are all zero except $b_{1j}$, which has to be nonzero. If there is another $j'\not = j$ such that $a_{1j'}\not = 0$, a similar argument shows that all $b_{j'l}=0$ for all $1<l \leq r$. But this then implies that the matrix $b$ is singular which is not the case. That is there is only one non-zero entry in the first row of the matrix $a$. Furthermore, since $a_{1j}b_{j1}=1$, we get $a_{1j}=1$. Repeating the same argument for other rows of the matrix $a$, we obtain that $a$ consists of exactly one $1$  in each row and  column, i.e., it is a permutation matrix.

Let $e_s$ be the standard generators of $K_0^{\gr} (\LL(E))$. 
Since $\phi$ is a permutation, $\phi(e_1)=e_k$ for some $k$. Suppose that for a suitable permutation of $l'_s$, $1\leq s \leq h'$, we have $1\leq k\leq l'_1$, i.e., $e_k$ is a non-zero element of $\bigoplus_{l'_1} \mathbb Z$. Then \[x^{l_1} e_k=x^{l_1} \phi(e_1)= \phi(x^{l_1} e_1)= \phi(e_1) =e_k.\] 
This implies that $l_1 \geq l'_1$. On the other hand 
\[\phi(e_1)= e_k= x^{l'_1} e_k= x^{l'_1} \phi(e_1)= \phi(x^{l'_1} e_1).\] So $e_1= x^{l'_1} e_1$. This implies that $l'_1 \geq l_1$. Thus $l_1=l'_1$. Again, since $\phi$ is a $\mathbb Z[x,x^{-1}]$-module isomorphism, the generators of  $\bigoplus_{l_1} \mathbb Z$ maps to 
the generators of $\bigoplus_{l'_1} \mathbb Z$, so it follows that the restriction of $\phi$ gives an isomorphism $\phi: \bigoplus_{l_1} \mathbb Z \rightarrow \bigoplus_{l'_1} \mathbb Z$. Repeating the same argument for $2\leq s \leq h$, we obtain the claim. 

This shows that $\LL(E)$ and $\LL(F)$ are direct sum of matrices over $K[x^{l_s},x^{-l_s}]$, $1\leq s \leq h$ with the graded $K_0$-group of each summand of $\LL(E)$ is $\mathbb Z[x,x^{-1}]$-module isomorphisms with the corresponding summand of $\LL(F)$.  The isomorphism between each summand and thus the isomorphism between the Leavitt path algebras $\LL(E)$ and $\LL(F)$ now follows  from applying Theorem~\ref{catgrhsf} $h$-times (i.e., for each summand), by setting $A=K[x^{l_s},x^{-l_s}]$,  $\Gamma_A=l_s\mathbb Z$, $1\leq s \leq h$, and $\Gamma=\mathbb Z$.

(iii) $E$ and $F$ are multi-headed rose graphs: By~(\ref{dampai}) $K_0^{\gr}(\LL(E)) \cong \bigoplus_{s=1}^t \mathbb Z[1/n_s]$ and $K_0^{\gr}(\LL(F)) \cong \bigoplus_{s=1}^{t'} \mathbb Z[1/n'_s]$. Tensoring with $\mathbb Q$, the isomorphism $\phi$  extends to an isomorphism of $\mathbb Q$-vector spaces, which then immediately implies
$t=t'$. Since the group of $n$-adic fractions $\mathbb Z[1/n]$ is a flat $\mathbb Z$-module, $\phi$ can be represented by a $t\times t$ matrix with entries from $\mathbb Q$. Write the sequence $(n_1,\dots,n_t)$ in an ascending order, respectively (changing the labels if necessary) and write the vertices $\{v_1,\dots,v_t\}$ (again, changing the labels if necessary)  such that $L_{n_1},\dots , L_{n_t}$ correspond to $(n_1,\dots,n_t)$. Do this for $(n'_1,\dots,n'_t)$ and $\{v'_1,\dots,v'_{t'}\}$ as well. 

We first show that  $n_s=n_s'$, $1\leq s \leq t$ and $\phi$ is a block diagonal  matrix.  Write the ascending sequence 
\begin{equation}\label{potrw}
(n_1,\dots,n_t)=(\sf{ n_1,\dots,n_1,n_2,\dots,n_2,\dots,n_k,\dots,n_k}),
\end{equation} 
where each ${\sf n}_l$ occurring $r_l$ times, $1\leq l \leq k$.  Let $\phi$ be represented by the matrix $a=(a_{ij})_{t\times t}$. Choose the smallest $s_1$, $1\leq s_1\leq t$, such that starting from $s_1$-th row all entires below the $s_1$-th row and between the $1$st and $r_1$-th column (including the column $r_1$) are zero. Clearly $s_1\geq 1$, as there is at least one nonzero entry on the $r_1$-th column, as $\phi$ is an isomorphism so $a$ is an invertible matrix.   Formally, $s_1$ is chosen such that $a_{ij}=0$ for all $s_1 < i \leq t$ and $1\leq j \leq r_1$ and there is $1< j \leq r_1$ such that $a_{s_1j}\not = 0$. We will show that $s_1=r_1$. Pick 
$a_{s_1j}\not = 0$, for $1< j \leq r_1$. Then $\phi(e_{j})=\sum_{s=1}^{t}a_{sj}e'_s$, where $e_{j}\in \bigoplus_{s=1}^t \mathbb Z[1/n_s] $ and $e'_j  \in  \bigoplus_{s=1}^{t} \mathbb Z[1/n'_s]$,  
with $1$ in $j$-th entry and zero elsewhere, respectively. This shows $a_{s_1j}e'_{s_1} \in \mathbb Z[1/n'_{s_1}]$. 
But $\phi$ is a $\mathbb Z[x,x^{-1}]$-module homomorphism. Thus
\begin{equation}\label{chrisdeb}
\sum_{s=1}^{t}n'_s a_{sj}e'_s=x \phi(e_j) =\phi(x e_j)=\phi(n_1 e_j)= \sum_{s=1}^{t}n_1 a_{sj}e'_s. 
\end{equation}
 Since $a_{s_1j}\not = 0$, we obtain $n'_{s_1}=n_1$. Next we show $a_{ij}=0$ for all 
$1 \leq i \leq s_1$ and $r_1 < j \leq t$ (see the matrix below). 
\begin{equation}\label{jh7h6g}
a=\left(\begin{array}{cccccc}
a_{11} & \cdots & a_{1r_1} & 0 & \cdots & 0\\
\vdots & \vdots  & \vdots & 0 & \cdots & 0 \\
a_{s_1,1}    &  \cdots & a_{s_1,r_1} & 0 &  \cdots& 0 \\
0    & 0 & 0 &  a_{s_1+1,r_1+1} &   \cdots & a_{s_1+1,t} \\
\vdots    & \vdots & \vdots  &  \vdots &   \ddots & \vdots \\
0    & 0 & 0 &  a_{t,r_1+1} &   \cdots & a_{t,t} \\
\end{array}\right)_{t\times t}.
\end{equation}

Suppose $a_{ij} \not = 0$ for some $1 \leq i \leq s_1$ and $r_1 < j \leq t$. Since $j>r_1$, $n_j>n_1$. Applying $\phi$ on $e_j$, a similar calculation  as~(\ref{chrisdeb}) shows that, since  $a_{ij} \not = 0$, $n'_i=n_j$. But the sequence $(n'_1,\dots,n'_t)$  is  arranged in an ascending order, so if $i\leq s_1$, then $n_j=n'_i \leq n'_{s_1}=n_1$, which is a contradiction. Therefore the matrix $a$, consists of two blocks as in~(\ref{jh7h6g}). Next we show that for $1\leq i\leq s_1$ $n'_i=n'_{s_1}=n_1$. If $1\leq i\leq s_1$, there is $a_{ij}\not = 0$, for some $1\leq j\leq r_1$ (as all $a_{ij}= 0$, $j>r_1$, and $a$ is invertible). Then consider $\phi(e_j)$ and again a similar calculation as in ~(\ref{chrisdeb}) shows that $n'_i=n_1$. 

Now restricting $\phi$ to the first $r_1$ summands of $K_0^{\gr}(\LL(E))$, we have a homomorphism 
\begin{equation}\label{beau54}
\phi_{r_1}:  \bigoplus_{s=1}^{r_1} \mathbb Z[1/n_1]\longrightarrow  \bigoplus_{s=1}^{s_1} \mathbb Z[1/n_1].
\end{equation}
Being the restriction of the isomorphism $\phi$,  $\phi_{r_1}$ is also an isomorphism. Tensoring~(\ref{beau54}) with $\mathbb Q$, we deduce $s_1=r_1$ as claimed. Repeating the same argument ($k-1$ times) for the second block of the matrix $a$ in ~(\ref{jh7h6g}), we obtain that $a$ is a diagonal block matrix consisting of $k$ blocks of $r_i\times r_i$, $1\leq i \leq k$ matrices. Since $\phi$ is an order preserving homomorphism, all the entires of the matrix $a$ are positive numbers. 
Similar to the case of multi-headed comets, we show that each of $\phi_{r_i}$ is a permutation matrix.  Consider the matrix $a_{r_1}=(a_{ij})$, $1\leq i,j \leq r_1$, representing the isomorphism $\phi_{r_1}$. Since $\phi_{r_1}$ is an isomorphism, $a_{r_1}$ is an invertible matrix, with the inverse $b_{r_1}=(b_{ij})$ with $b_{ij} \geq 0$, $1\leq i,j \leq r_1$, as $b_{r_1}$ is also order preserving. Thus there is $1\leq j\leq r_1$ such that $a_{1j}\not = 0$.  Since $\sum_{i=1}^{r_1}a_{1i} b_{il}=0$, for $1<l\leq r_1$, $a_{1j}\not = 0$ and all the numbers involved are positive, it follows $b_{jl}=0$ for all $1<l \leq r_1$. That is the entires of $j$-th row of the matrix $b_{r_1}$ are all zero except $b_{1j}$, which has to be nonzero. If there is another $j'\not = j$ such that $a_{1j'}\not = 0$, a similar argument shows that all $b_{j'l}=0$ for all $1<l \leq r_1$. But this then implies that the matrix $b_{r_1}$ is singular which is not the case. That is there is only one non-zero entry in the first row of the matrix $a_{r_1}$. Furthermore, since $a_{1j}b_{j1}=1$, $a_{1j}={n_1}^{c_1}$, where $c_1 \in \mathbb Z$. Repeating the same argument for other rows of the matrix $a_{r_1}$, we obtain that $a_{r_1}$ consists of exactly one number of the form ${n_1}^c$, $c\in \mathbb Z$, in each row and  column, i.e., it is a permutation matrix with entries invertible in $\mathbb Z[1/n_1]$. As in Theorem~\ref{re282}, we can write 
\begin{equation*}
\LL(E) \cong_{\gr} \bigoplus_{s=1}^t \M_{n(v_s)} \big(\LL(1,n_s)\big)\big (|p^{v_s}_1|,\dots, |p^{v_s}_{n(v_s)}|\big).
\end{equation*}
Furthermore, one can re-group this into $k$ summands, with each summand the matrices over the Leavitt algebras $\LL(1,{\sf n}_u)$, $1\leq u \leq k$ (see~(\ref{potrw})). Do the same re-grouping for the $\LL(F)$. On the level of $K_0$, since the isomorphism $\phi$ decomposes into a diagonal block matrix,  it is enough to consider each $\phi_{r_i}$ separately. For $i=1$, 
\begin{equation}
\phi_{r_1}: \bigoplus_{s=1}^{r_1} K^{\gr}_0\Big(\M_{n(v_s)} \big(R\big)\big (|p^{v_s}_1|,\dots, |p^{v_s}_{n(v_s)}|\big)\Big)
\longrightarrow  \bigoplus_{s=1}^{r_1} K^{\gr}_0\Big(\M_{n(v'_s)} \big(R\big)\big (|p'^{v'_s}_1|,\dots, |p'^{v'_s}_{n(v'_s)}|\big)\Big)
\end{equation} with \[\phi_{r_1}\Big(\bigoplus_{s=1}^{r_1} \Big[\M_{n(v_s)} \big(R\big)\big (|p^{v_s}_1|,\dots, |p^{v_s}_{n(v_s)}|\big)\Big ]\Big )= \bigoplus_{s=1}^{r_1} \Big [\M_{n(v'_s)} \big(R\big)\big (|p'^{v'_s}_1|,\dots, |p'^{v'_s}_{n(v'_s)}|\big)\Big],\] where $R=\LL(1,{\sf n}_1)$, ${\sf n}_1=n_1$. Using the graded Morita theory, (as in Theorem~\ref{re282}), we  can write 
\[\phi_{r_1}\Big( \bigoplus_{s=1}^{r_1} \big [R(-|p^{v_s}_1|)\oplus\dots\oplus R(-|p^{v_s}_{n(v_s)}|)\big ]\Big)=
 \bigoplus_{s=1}^{r_1} \big[R(-|p'^{v'_s}_1|)\oplus\dots\oplus R(-|p'^{v'_s}_{n(v'_s)}|)\big]. 
\] But $\phi_{r_1}$ is a permutation matrix (with power of $n_1$ as entires). Thus we have (by suitable relabeling if necessary),  for any $1\leq s \leq r_1$, 
\begin{equation}\label{h32319}
  n_1^{c_s}\Big([R(-|p^{v_s}_1|)\oplus\dots\oplus R(-|p^{v_s}_{n(v_s)}|)\big ]\Big)=
 \big[R(-|p'^{v'_s}_1|)\oplus\dots\oplus R(-|p'^{v'_s}_{n(v'_s)}|)\big],
\end{equation}
 where $c_s \in \mathbb Z$. Since for any $i \in \mathbb Z$, 
\[\LL(1,n_1)(i)\cong \bigoplus_{n_1}\LL(1,n_1)(i-1),\] if $c_s>0$,  we have 
\begin{equation}\label{gfqw32}
  \big[R(-|p^{v_s}_1|+c_s)\oplus\dots\oplus R(-|p^{v_s}_{n(v_s)}|+c_s)\big ]=
 \big[R(-|p'^{v'_s}_1|)\oplus\dots\oplus R(-|p'^{v'_s}_{n(v'_s)}|)\big],
\end{equation} otherwise 
\begin{equation}\label{gfqw35}
 \big [R(-|p^{v_s}_1|)\oplus\dots\oplus R(-|p^{v_s}_{n(v_s)}|)\big ]=
 \big[R(-|p'^{v'_s}_1|+c_s)\oplus\dots\oplus R(-|p'^{v'_s}_{n(v'_s)}|+c_s)\big]. 
\end{equation} 
Suppose, say,~(\ref{gfqw32}) holds. The case of ~(\ref{gfqw35}) is similar. Recall that by Dade's theorem (see \S\ref{dadestmal}), the 
functor $(-)_0:\mbox{gr-}A\rightarrow \mbox{mod-}A_0$, $M\mapsto M_0$, is an equivalence  so that it induces 
an isomorphism $K_0^{\gr}(R)  \rightarrow K_0(R_0)$. Applying this to~(\ref{gfqw32}), we obtain  
\begin{equation}\label{gfqw30021}
  \big[R_{-|p^{v_s}_1|+c_s}\oplus\dots\oplus R_{-|p^{v_s}_{n(v_s)}|+c_s}\big ]=
 \big[R_{-|p'^{v'_s}_1|}\oplus\dots\oplus R_{-|p'^{v'_s}_{n(v'_s)}|}\big]
\end{equation}
in $K_0(R_0)$. Since $R_0$ is an ultramatricial algebra, it is unit-regular and by Proposition~15.2 in~\cite{goodearlbook},
\begin{equation}\label{gfqw3002}
  R_{-|p^{v_s}_1|+c_s}\oplus\dots\oplus R_{-|p^{v_s}_{n(v_s)}|+c_s}\cong 
 R_{-|p'^{v'_s}_1|}\oplus\dots\oplus R_{-|p'^{v'_s}_{n(v'_s)}|} 
\end{equation}
as $R_0$-modules. Tensoring this with the graded ring $R$ over $R_0$, since $R$ is strongly graded (Theorem~\ref{hazst}), and so $R_{\alpha} \otimes_{R_0} R \cong_{\gr} R(\alpha)$ as graded $R$-modules (\cite[3.1.1]{grrings}), we have a (right) graded $R$-module isomorphism 
\begin{equation}
R(-|p^{v_s}_1|+c_s)\oplus\dots\oplus R(-|p^{v_s}_{n(v_s)}|+c_s)\cong_{\gr}
 R(-|p'^{v'_s}_1|)\oplus\dots\oplus R(-|p'^{v'_s}_{n(v'_s)}|).
\end{equation}
Applying $\End$-functor to this isomorphism, we get (see \S\ref{matgrhe}), 
\[\M_{n(v_s)} \big(\LL(1,{\sf n}_1)\big)\big (|p^{v_s}_1|-c_s,\dots, |p^{v_s}_{n(v_s)}|-c_s\big) \cong_{\gr}
\M_{n(v_s)} \big(\LL(1,{\sf n}_1)\big)\big (|p'^{v'_s}_1|,\dots, |p'^{v'_s}_{n(v'_s)}|\big),
\] which is a $K$-algebra isomorphism. But clearly (\S\ref{matgrhe})
\[\M_{n(v_s)} \big(\LL(1,{\sf n}_1)\big)\big (|p^{v_s}_1|-c_s,\dots, |p^{v_s}_{n(v_s)}|-c_s\big) \cong_{\gr}
\M_{n(v_s)} \big(\LL(1,{\sf n}_1)\big)\big (|p^{v_s}_1|,\dots, |p^{v_s}_{n(v_s)}|\big).
\] So 
\[
\M_{n(v_s)} \big(\LL(1,{\sf n}_1)\big)\big (|p^{v_s}_1|,\dots, |p^{v_s}_{n(v_s)}|\big) \cong_{\gr} 
\M_{n(v_s)} \big(\LL(1,{\sf n}_1)\big)\big (|p'^{v'_s}_1|,\dots, |p'^{v'_s}_{n(v'_s)}|\big).
\]
Repeating this argument for all $1 \leq s \leq r_1$ and then for all $r_i$, $1\leq i\leq k$, we get 
\[\LL(E) \cong_{\gr} \LL(F)\] as $K$-algebras.  
\end{proof}

Using Theorem~\ref{mani543} we show that Conjecture~\ref{cofian} holds for the category  polycephaly graphs. 

\begin{corollary}\label{griso7dgre}
Let $E$ and $F$ be polycephaly graphs. Then $\LL_K(E) \cong_{\gr} \LL_K(F)$ as  rings if and only if $\LL_K(E) \cong_{\gr} \LL_K(F)$ as $K$-algebras.  
\end{corollary}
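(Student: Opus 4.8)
The reverse implication is immediate: any graded $K$-algebra isomorphism is in particular a graded ring isomorphism. The content of the statement is therefore the forward direction, and the plan is to feed a graded ring isomorphism into Theorem~\ref{mani543}, whose proof manufactures a graded $K$-algebra isomorphism out of purely $K$-theoretic data.

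First I would show that a graded ring isomorphism $\varphi\colon\LL_K(E)\to\LL_K(F)$ induces an order-preserving $\mathbb Z[x,x^{-1}]$-module isomorphism
\[
\big(K_0^{\gr}(\LL_K(E)),[\LL_K(E)]\big)\cong\big(K_0^{\gr}(\LL_K(F)),[\LL_K(F)]\big).
\]
Indeed, $\varphi$ restricts to group isomorphisms $\LL_K(E)_\gamma\to\LL_K(F)_\gamma$ for every $\gamma\in\mathbb Z$, so restriction of scalars along $\varphi^{-1}$ carries a graded right $\LL_K(E)$-module $M=\bigoplus_\gamma M_\gamma$ to a graded right $\LL_K(F)$-module with the same homogeneous decomposition. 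One checks directly that $\varphi$ intertwines the twisted action on $\LL_K(E)$ with the regular action on $\LL_K(F)$, so $\LL_K(E)$ is sent to $\LL_K(F)$; and since the functor sends graded finitely generated projectives to graded finitely generated projectives, it carries the cone of $K_0^{\gr}$ onto the cone. Crucially, because $\varphi$ preserves degrees, the induced equivalence $\Pgr(\LL_K(E))\to\Pgr(\LL_K(F))$ commutes \emph{on the nose} with the suspension functors $\mathcal T_\alpha$, as $M(\alpha)$ is defined purely by reindexing the grading (see \S\ref{pregr529}). Passing to $K_0^{\gr}$ therefore yields an isomorphism of $\mathbb Z[x,x^{-1}]$-modules which is pointed and order preserving in both directions.

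With this isomorphism in hand, I would simply invoke Theorem~\ref{mani543}. Its proof does not merely conclude that the two algebras are abstractly isomorphic; from any order-preserving pointed $\mathbb Z[x,x^{-1}]$-module isomorphism of the graded Grothendieck groups it constructs an explicit graded $K$-\emph{algebra} isomorphism, via Theorem~\ref{catgrhsf} for the acyclic and comet parts and a Dade-theoretic, unit-regularity argument on the degree-zero part for the rose parts. Applying this to the isomorphism built from $\varphi$ produces a graded $K$-algebra isomorphism $\LL_K(E)\cong_{\gr}\LL_K(F)$, which completes the forward direction.

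The only genuine subtlety, and the step I would take most care over, is the first one: a ring isomorphism need not be $K$-linear, yet we must still extract a $\mathbb Z[x,x^{-1}]$-module isomorphism on $K_0^{\gr}$. This is not an obstacle once one observes that the module structure on $K_0^{\gr}$ comes entirely from the suspension functors $\mathcal T_\alpha$, i.e.\ from the $\mathbb Z$-grading, and that a graded ring isomorphism respects the grading by definition; no $K$-linearity enters this passage. The $K$-linearity reappears only at the end, supplied not by $\varphi$ itself but by the internal construction inside Theorem~\ref{mani543}.
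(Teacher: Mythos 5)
Your proposal is correct and follows essentially the same route as the paper: derive from the graded ring isomorphism a pointed, order-preserving $\mathbb Z[x,x^{-1}]$-module isomorphism of the graded Grothendieck groups (noting that this step uses only the grading, not $K$-linearity), and then feed it into the proof of Theorem~\ref{mani543}, which via Theorem~\ref{catgrhsf} for the acyclic/comet parts and the Dade/unit-regularity argument for the rose parts produces a graded $K$-algebra isomorphism. Your explicit justification of the first step is in fact more detailed than the paper's, which simply asserts that the ring isomorphism "gives" the $K$-theoretic isomorphism.
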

\begin{proof}
Let $E$ and $F$ be two polycephaly graphs such that $\LL(E)\cong_{\gr} \LL(F)$. This gives a $\mathbb Z[x,x^{-1}]$-module isomorphism 
\[\big (K_0^{\gr}(\LL_K(E)),[\LL_K(E)]\big ) \cong \big (K_0^{\gr}(\LL_K(F)),[\LL_K(F)]\big ).\] 
As in the first part of the proof of  Theorem~\ref{mani543}, the graded $K_0$ of each type of the graph $E$ (acyclic, comets or rose graphs) is isomorphic to the corresponding type of the graph $F$.   
For the acyclic and comets part of $E$ and $F$, the combination of part (1) and (2) of Theorem~\ref{catgrhsf} shows that their corresponding Leavitt path algebras  are $K$-algebra isomorphism. (In fact if $E$ and $F$ are $C_n$-comets, then there is an $K[x^n,x^{-n}]$-graded isomorphism.)  The $K$-isomorphism of Leavitt path algebras of rose graphs part of $E$ and $F$ was proved in the last part of the proof of Theorem~\ref{mani543}. 
\end{proof}

\forget

\begin{remark}\label{ng4352}
Let $E$ and $F$ be graphs with no sinks such that the weak classification Conjecture~\ref{weakconj} holds. Then   $C^*(E)\cong C^*(F)$ implies $\LL_{\mathbb C}(E) \cong_{\gr} \LL_{\mathbb C} (F)$. Indeed, if $C^*(E)\cong C^*(F)$, then 
$C^*(E)\rtimes_{\gamma} \mathbb T \cong C^*(F)\rtimes_{\gamma} \mathbb T$. 
By \cite[Lemma~3.1]{Raeburn113}, $C^*(E\times_1 \mathbb Z)\cong C^*(E)\rtimes_{\gamma} \mathbb T$. From sequence~(\ref{lol1532}) now follows that \[K^{\gr}_0(\LL_{\mathbb C}(E)) \cong K_0(C^*(E\times_1 \mathbb Z))  \cong  K_0(C^*(F\times_1 \mathbb Z)) \cong K^{\gr}_0(\LL_{\mathbb C}(F)).\] So if  Conjecture~\ref{weakconj} holds, we then have $\LL_{\mathbb C}(E) \cong_{\gr} \LL_{\mathbb C}(F)$.  Thus by Theorem~\ref{mani543}, if $E$ and $F$ are comet or polycephaly graphs and    $C^*(E)\cong C^*(F)$ then $\LL_{\mathbb C}(E) \cong_{\gr} \LL_{\mathbb C} (F)$.
\end{remark}

\forgotten

\begin{remark}
In Theorem~\ref{mani543}, the assumption that the isomorphism between $K$-groups is $\mathbb Z[x,x^{-1}]$-module homomorphism is used in significant way. The following example shows one can not relax this assumption (see also Example~\ref{smallgraphs2}). 

Consider $R=\LL_2$ and $S=\LL_4$. By Theorem~\ref{re282}, $(K^{\gr}_0(R),[R])=(\mathbb Z[1/2],1)$ and 
$(K^{\gr}_0(S),[S])=(\mathbb Z[1/4],1)$. So the identity map, gives an isomorphism $(K^{\gr}_0(R),[R]) \cong (K^{\gr}_0(S),[S])$. But we know $\LL_2 \not \cong \LL_4$, since by Theorem~\ref{wke}, $K_0(\LL_2)=0$ whereas $K_0(\LL_4)=\mathbb Z/3\mathbb Z$. 
Theorem~\ref{mani543} can't be applied here, as this isomorphism is not $\mathbb Z[x,x^{-1}]$-module isomorphism. For, otherwise 
$2[S(-1)]=2[R(-1)]=[R]=[S]=4[S(-1)]$. This implies $[R]=2[S(-1)]=0$ which is clearly absurd.  
\end{remark}

\subsection{Matrices over Leavitt algebras}

In \cite{arock}, Abrams gives the necessary and sufficient conditions for matrices over Leavitt path algebras to be graded isomorphic. For a $\Gamma$-graded ring $A$, he considers grading induced on $\M_k(A)$ by setting  
$\M_k(A)_\gamma=\M_k(A_\gamma)$, for $\gamma \in \Gamma$, i.e., with $(0,\dots,0)$-suspension. For positive
integers $k$ and $n$, {\it factorization of $k$ along $n$}, means $k=td$ where $d\mid n^i$ for some positive integer $i$ and $\gcd(t,n)=1$. 
Then Theorem~1.3 in \cite{arock} states that for positive integers $k,n,n'$, $n \geq 2$, $M_k(\LL_n)\cong_{\gr} M_{k'}(\LL_n)$ if and only if $t=t'$, where $k=td$,  respectively $k'=t'd'$, are the factorization of $k$, respectively $k'$, along $n$. With this classification, \[M_3(\LL_2)\not \cong_{\gr} M_4(\LL_2).\] However
combining Theorem~\ref{re282} and Theorem~\ref{mani543},  we shall see that the following two graphs 
\begin{equation*}
\xymatrix@=10pt{
      &\bullet \ar[dr]   & &&&&  & \bullet \ar[dr] \\
 E: &&  \bullet \ar@(ur,rd)  \ar@(u,r) & & && F: & & \bullet  \ar[r] &  \bullet \ar@(ur,rd)  \ar@(u,r) &\\
      &\bullet \ar[ur]  & &&&&  & \bullet \ar[ur]
}
\end{equation*}
give isomorphic Leavitt path algebras, i.e.,
\begin{equation}\label{b53s}
\M_3(\LL_2)(0,1,1) \cong_{\gr} \M_4(\LL_2)(0,1,2,2).
\end{equation} This shows, by considering suspensions, we get much wider classes of graded isomorphisms of matrices over Leavitt algebras. Indeed, by Theorem~\ref{re282}, 
$(K^{\gr}_0(\LL(E),[\LL(E)])=( \mathbb Z[1/2],2)$ and $(K^{\gr}_0(\LL(F),[\LL(F)])= (\mathbb Z[1/2],2)$. Now the idenitity map gives an ordering preserving isomorphisms between these $K$-groups and so by Theorem~\ref{mani543}, we have 
$\LL(E)\cong_{\gr} \LL(F)$ which then applying ~(\ref{dampai}) we obtain the graded isomorphism~(\ref{b53s}). 

We can now give a criterion when two matrices over Leavitt algebras are graded isomorphic.  It is an easy exercise to see this covers Abrams' theorem \cite[Proposition~1.3]{arock}, when there is no suspension.

\begin{theorem}\label{cfd2497} Let $k,k',n,n'$ be positive integers, where $n$ is prime. Then 
\begin{equation}\label{tes5363}
\M_k(\LL_n)(\la_1,\dots,\la_k) \cong_{\gr} \M_{k'}(\LL_{n'})(\gamma_1,\dots,\gamma_{k'})
\end{equation}
if and only if $n=n'$ and there is  $j \in \mathbb Z$ such that $n^j\sum_{i=1}^k n^{-\la_i}=\sum_{i=1}^{k'} {n}^{-\ga_i}$.

\end{theorem}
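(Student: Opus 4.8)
The plan is to reduce everything to a computation in the graded Grothendieck group of the Leavitt algebra $\LL_n$, which Theorem~\ref{re282} identifies with the $\mathbb Z[x,x^{-1}]$-module $\mathbb Z[1/n]$ on which $x$ acts as multiplication by $n$ and $[\LL_n]$ corresponds to $1$. First I would record, via the graded Morita equivalence of Proposition~\ref{grmorita} (with $A=\LL_n$ and $\overline\delta=\overline\lambda$) together with the identification $\M_k(\LL_n)(\overline\lambda)\cong_{\gr}\End_{\LL_n}\big(\bigoplus_i\LL_n(-\lambda_i)\big)$ of \S\ref{matgrhe}, that
\[
\big(K_0^{\gr}(\M_k(\LL_n)(\overline\lambda)),\,[\M_k(\LL_n)(\overline\lambda)]\big)\cong\Big(\mathbb Z[1/n],\,\textstyle\sum_{i=1}^k n^{-\lambda_i}\Big)
\]
as pointed $\mathbb Z[x,x^{-1}]$-modules; here $\psi$ sends the free rank-one module to $\bigoplus_i\LL_n(-\lambda_i)$, and $[\LL_n(-\lambda)]=n^{-\lambda}$ because $x[\LL_n]=[\LL_n(1)]=n$. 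The same identification holds on the right with $n'$ and $\overline\gamma$.

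For the ``only if'' direction, a graded ring isomorphism induces, by functoriality of $K_0^{\gr}$, an order-preserving $\mathbb Z[x,x^{-1}]$-module isomorphism $(\mathbb Z[1/n],\sum n^{-\lambda_i})\cong(\mathbb Z[1/n'],\sum {n'}^{-\gamma_i})$ carrying order-unit to order-unit. The key observation is that $\mathbb Z[1/n]\cong\mathbb Z[x,x^{-1}]/(x-n)$ as $\mathbb Z[x,x^{-1}]$-modules, namely the quotient by the kernel of $x\mapsto n$. Since the annihilator of a cyclic module $R/I$ over a commutative ring $R$ equals $I$, an isomorphism of the cyclic modules $\mathbb Z[x,x^{-1}]/(x-n)$ and $\mathbb Z[x,x^{-1}]/(x-n')$ forces $(x-n)=(x-n')$; writing $x-n'=\pm x^m(x-n)$ for a unit $\pm x^m$ of $\mathbb Z[x,x^{-1}]$ and comparing coefficients yields $m=0$ and $n=n'$. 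Once $n=n'$, the isomorphism is multiplication by a unit of $\End_{\mathbb Z[x,x^{-1}]}(\mathbb Z[1/n])\cong\mathbb Z[1/n]$; as $n$ is prime these units are exactly $\pm n^j$, and since both order-units are positive the sign must be $+$. Preservation of the order-unit then gives $n^j\sum n^{-\lambda_i}=\sum n^{-\gamma_i}$, the asserted relation.

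For the ``if'' direction I would instead build the graded isomorphism by hand. Because only the differences $\lambda_i-\lambda_j$ enter the entries of $\M_k(\LL_n)(\overline\lambda)$ (\S\ref{matgrhe}), shifting $\overline\lambda$ by the constant $-j$ leaves the graded ring unchanged, so it suffices to produce a graded $\LL_n$-module isomorphism $\bigoplus_i\LL_n(-\lambda_i+j)\cong_{\gr}\bigoplus_i\LL_n(-\gamma_i)$ and apply $\End_{\LL_n}(-)$. By hypothesis these two graded free modules have equal classes $n^j\sum n^{-\lambda_i}=\sum n^{-\gamma_i}$ in $K_0^{\gr}(\LL_n)$. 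Since $\LL_n$ is strongly graded (Theorem~\ref{hazst}), Dade's theorem (\S\ref{dadestmal}) reduces this to equality of classes of the degree-zero parts in $K_0((\LL_n)_0)$; as $(\LL_n)_0$ is ultramatricial, hence unit-regular, equal $K_0$-classes force an $(\LL_n)_0$-module isomorphism by \cite[Proposition~15.2]{goodearlbook}, which transports back through Dade's equivalence to the required graded isomorphism of modules, and hence of their endomorphism rings.

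The main obstacle is the ``only if'' step of pinning down $n=n'$: as \emph{abelian groups} $\mathbb Z[1/n]$ and $\mathbb Z[1/n']$ are isomorphic precisely when $n$ and $n'$ have the same prime divisors, so the grading-induced $\mathbb Z[x,x^{-1}]$-action is indispensable, and it is exactly the cyclic-module/annihilator argument that lets the full ring $\mathbb Z[x,x^{-1}]$ detect $n$ itself rather than merely its radical. The primality of $n$ is used only at the very end, to guarantee that the positive units of $\mathbb Z[1/n]$ are precisely the powers $n^j$, so that the order-unit relation takes the clean form stated.
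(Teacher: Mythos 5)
Your proof is correct and follows essentially the same route as the paper: reduce via graded Morita equivalence and Theorem~\ref{re282} to pointed $\mathbb Z[x,x^{-1}]$-module isomorphisms of $\big(\mathbb Z[1/n],\sum_i n^{-\la_i}\big)$, pin down such isomorphisms as multiplication by positive units $n^j$ (using primality of $n$), and for the converse pass through Dade's theorem, unit-regularity of $(\LL_n)_0$ via \cite[Proposition~15.2]{goodearlbook}, and the $\End$-functor. The only local differences are minor: the paper obtains $n=n'$ from the one-line computation $n\phi(1)=\phi(x\cdot 1)=x\phi(1)=n'\phi(1)$ and torsion-freeness rather than your annihilator/cyclic-module argument, and in the converse it splits into the cases $j\geq 0$ and $j<0$, working with $n^j$ copies and the relation $\LL_n(i)\cong\bigoplus_n\LL_n(i-1)$, whereas you absorb the shift $j$ into the suspensions from the outset, which is a slightly cleaner bookkeeping of the same argument.
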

\begin{proof}
By Theorem~\ref{re282} (and its proof), the isomorphism~(\ref{tes5363}), induces an $\mathbb Z[x,x^{-1}]$-isomorphism 
$\phi:\mathbb Z[1/n]\rightarrow \mathbb Z[1/n']$ such that $\phi(\sum_{i=1}^k n^{-\la_i})=\sum_{i=1}^{k'} {n'}^{-\ga_i}$. Now 
\[n \phi(1)=\phi(n 1) =\phi(x 1) =x\phi(1)=n'\phi(1)
\]
which implies $n=n'$. (This could have also been obtained by applying non-graded $K_0$ to isomorphism~(\ref{tes5363}) and applying Theorem~\ref{wke}.) Since $\phi$ is an isomorphism, it has to be multiplication by  $n^j$, for some $j \in \mathbb Z$.  Thus $\sum_{i=1}^{k'} {n}^{-\ga_i}=\phi(\sum_{i=1}^k n^{-\la_i})=n^j \sum_{i=1}^k n^{-\la_i}$. 

Conversely suppose there is $j \in \mathbb Z$,  such that $n^j\sum_{i=1}^k n^{-\la_i}=\sum_{i=1}^{k'} {n}^{-\ga_i}$. Set
$A=\LL_n$. Since Leavitt algebras are strongly graded (Theorem~\ref{hazst}), it follows 
$R=\M_k(A)(\la_1,\dots,\la_k)$ and $S=\M_{k'}(A)(\gamma_1,\dots,\gamma_{k'})$ are strongly graded (\cite[2.10.8]{grrings}). Again Theorem~\ref{re282} shows that the multiplication by $n^j$ gives an isomorphism $(K^{\gr}_0(R),[R]) \cong (K^{\gr}_0(S),[S])$. Therefore $n^j[R]=[S]$. Using the graded Morita, we get 
\[ n^j [A(-\la_1)\oplus \dots \oplus A(-\la_k)]= [A(-\gamma_1)\oplus\dots\oplus A(-\gamma_{k'})].
\] 
Using Dade's theorem (\S\ref{dadestmal}), passing this equality to $A_0$ , we get 
\[ n^j [A_{-\la_1}\oplus\dots \oplus A_{-\la_k}]= [A_{-\gamma_1}\oplus\dots\oplus A_{-\gamma_{k'}}].
\]
If $j\geq 0$ (the case $j<0$ is argued similarly),  since $A_0$ is an ultramatricial algebra, it is unit-regular and by Proposition~15.2 in~\cite{goodearlbook},
\[A^{n^j}_{-\la_1}\oplus\dots \oplus A_{-\la_k}^{n^j} \cong
A_{-\gamma_1}\oplus\dots\oplus A_{-\gamma_{k'}}.\]
Tensoring this with the graded ring $A$ over $A_0$, since $A=\LL(1,n)$ is strongly graded (Theorem~\ref{hazst}), and so $A_{\la} \otimes_{A_0} A \cong_{\gr} A(\la)$ as graded $A$-modules (\cite[3.1.1]{grrings}), we have a (right) graded $R$-module isomorphism
\[
A(-\la_1)^{n^j}\oplus\dots \oplus A(-\la_k)^{n^j} \cong_{\gr}
A(-\gamma_1)\oplus\dots\oplus A(-\gamma_{k'}).
\]
Since for any $i \in \mathbb Z$, $A(i)\cong \bigoplus_{n} A(i-1).$ 
we have 
\[
A(-\la_1+j)\oplus\dots \oplus A(-\la_k+j) \cong_{\gr}
A(-\gamma_1)\oplus\dots\oplus A(-\gamma_{k'}).
\]
Now applying $\End$-functor to this isomorphism, we get (see \S\ref{matgrhe}), 
\[ \M_k(A)(\la_1-j,\dots,\la_k-j) \cong_{\gr} \M_{k'}(A)(\gamma_1,\dots,\gamma_{k'}) \] which gives the isomorphism ~(\ref{tes5363}). 
\end{proof}


\forget

Applying Dade's theorem (see \S\ref{dadestmal}), we obtain the isomorphism $\phi_0$ below, 
\begin{equation}\label{gsqqwq}
\xymatrix{
\big(K^{\gr}_0(R),[R]\big) \cong \big (\mathbb Z[1/n], \sum_{i=1}^k n^{-\la_i} \big)  \ar[rr]^{\phi} && \big (\mathbb Z[1/n], \sum_{i=1}^{k'} n^{-\ga_i} \big) \cong \big(K^{\gr}_0(S),[S]\big) \ar[d]^{(-)_0} \\
\big(K_0(R_0),[R_0]\big) \ar[u]^{-\otimes_{R_0}R} \ar@{.>}[rr]^{\phi_0} && \big(K_0(S_0),[S_0]\big)
}
\end{equation}
Since $R_0$ and $S_0$
 are ultramatricial algebras, by Elliott's theorem (see~\cite[Theorem 15.26]{goodearlbook}), there exists an isomorphism 
$R_0 \cong S_0$. Tensoring this with $\LL_n$, since $\LL_n$ is strongly graded, we obtain 
\begin{multline*}
\M_k(\LL_n)(\la_1,\dots,\la_k) \cong_{\gr} \M_k(\LL_n)(\la_1,\dots,\la_k)_0 \otimes_{\mathbb Z} \LL_n
\cong_{\gr} \\  \M_{k'}(\LL_{n})(\gamma_1,\dots,\gamma_{k'})_0 \otimes_{\mathbb Z} \LL_n \cong_{\gr}  
\M_{k'}(\LL_{n})(\gamma_1,\dots,\gamma_{k'})
\end{multline*}

\forgotten


\begin{remarks}\hfill
\begin{enumerate}

\item The if part of Theorem ~\ref{cfd2497} does not need the assumption of $n$ being prime. In fact, the same proof shows 
that if $\M_k(\LL_n)(\la_1,\dots,\la_k) \cong_{\gr} \M_{k'}(\LL_{n'})(\gamma_1,\dots,\gamma_{k'})$ then $n=n'$ and there is  $j \in \mathbb Z$ such that $p^\delta \sum_{i=1}^k n^{-\la_i}=\sum_{i=1}^{k'} {n}^{-\ga_i}$, where $p | n^j$ and $\delta=\pm 1$. By setting $k=1$, $\la_1=0$ and $\ga_i=0$, $1\leq i \leq k'$, we recover Theorem~6.3 of \cite{aapardo}. 

\item The proof of Theorem ~\ref{cfd2497} shows that the isomorphism (\ref{tes5363}) is in fact an induced graded  isomorphism. Recall that for graded (right) modules $N_1$ and $N_2$ over the graded ring $A$, the graded ring isomorphism $\Phi: \End(N_1) \rightarrow \End(N_2)$ is called {\it induced}, if there is a graded $A$-module isomorphism $\phi:N_1\rightarrow N_2$ which lifts to $\Phi$ (see \cite[p.4]{arock}). 

\end{enumerate}
\end{remarks} 

Imitating a similar proof as in Theorem ~\ref{cfd2497} we can obtain the following result.

\begin{corollary} \label{k43shab}
There is a graded $\LL(1,n)$-isomorphism 
\[\LL(1,n)^k (\la_1,\dots,\la_k) \cong_{\gr} \LL(1,n)^{k'}(\ga_1,\dots,\ga_{k'})\] if and only if 
$\sum_{i=1}^k n^{\la_i}=\sum_{i=1}^{k'} {n}^{\ga_i}$.
\end{corollary}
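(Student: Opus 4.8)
The plan is to read off both directions from the computation of the graded Grothendieck group of $\LL(1,n)$. Recall from Theorem~\ref{re282} (together with Proposition~\ref{k0grof} and Dade's theorem) that there is a $\mathbb Z[x,x^{-1}]$-module isomorphism $K_0^{\gr}(\LL(1,n)) \cong \mathbb Z[1/n]$, under which the order-unit $[\LL(1,n)]$ corresponds to $1$ and the action of $x$ is multiplication by $n$. Since $x[P]=[P(1)]$ on $K_0^{\gr}$ and $[\LL(1,n)(\la)] = x^{\la}[\LL(1,n)]$, this identification sends $[\LL(1,n)(\la)]$ to $n^{\la}$, and hence
\[
\big[\LL(1,n)^k(\la_1,\dots,\la_k)\big] \longmapsto \sum_{i=1}^k n^{\la_i}.
\]
(Equivalently, this follows from the relation $\LL(1,n)(i) \cong_{\gr} \bigoplus_n \LL(1,n)(i-1)$, exactly as in the proof of Theorem~\ref{re282}.)

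Granting this, the ``only if'' direction is immediate: a graded $\LL(1,n)$-isomorphism $\LL(1,n)^k(\la_1,\dots,\la_k) \cong_{\gr} \LL(1,n)^{k'}(\ga_1,\dots,\ga_{k'})$ forces the two classes to coincide in $K_0^{\gr}(\LL(1,n))$, whence $\sum_{i=1}^k n^{\la_i} = \sum_{i=1}^{k'} n^{\ga_i}$ in $\mathbb Z[1/n]$.

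For the converse I would imitate the argument in Theorem~\ref{cfd2497}. Assume $\sum_{i=1}^k n^{\la_i} = \sum_{i=1}^{k'} n^{\ga_i}$; by the identification above this says precisely that $[\LL(1,n)^k(\la_1,\dots,\la_k)] = [\LL(1,n)^{k'}(\ga_1,\dots,\ga_{k'})]$ in $K_0^{\gr}(\LL(1,n))$. Write $A = \LL(1,n)$, which is strongly graded by Theorem~\ref{hazst}. By Dade's theorem (\S\ref{dadestmal}) the functor $(-)_0$ induces an isomorphism $K_0^{\gr}(A) \cong K_0(A_0)$; applying it, and using $\big(A(\la)\big)_0 = A_{\la}$, we obtain
\[
\big[A_{\la_1}\oplus \dots \oplus A_{\la_k}\big] = \big[A_{\ga_1}\oplus \dots \oplus A_{\ga_{k'}}\big]
\]
in $K_0(A_0)$. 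Since $A_0$ is an ultramatricial algebra it is unit-regular, so by Proposition~15.2 in~\cite{goodearlbook} equality of classes upgrades to an $A_0$-module isomorphism $A_{\la_1}\oplus\dots\oplus A_{\la_k} \cong A_{\ga_1}\oplus\dots\oplus A_{\ga_{k'}}$. Tensoring this isomorphism with $A$ over $A_0$ and using $A_{\la}\otimes_{A_0} A \cong_{\gr} A(\la)$ (valid because $A$ is strongly graded, \cite[3.1.1]{grrings}) yields the desired graded $A$-isomorphism $\LL(1,n)^k(\la_1,\dots,\la_k) \cong_{\gr} \LL(1,n)^{k'}(\ga_1,\dots,\ga_{k'})$.

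The main obstacle is the lifting step in the converse: equality in a Grothendieck group records only stable isomorphism, so in general one cannot recover an honest isomorphism of modules. The device that makes it work here is the reduction to the degree-zero ring $A_0$ via Dade's theorem, together with the unit-regularity of the ultramatricial algebra $A_0$, which is exactly what promotes stable isomorphism to genuine isomorphism; one must also keep careful track of the suspension/degree bookkeeping (namely $\big(A(\la)\big)_0 = A_\la$ and $A_\la \otimes_{A_0} A \cong_{\gr} A(\la)$) so that tensoring back up reproduces the correct shifts.
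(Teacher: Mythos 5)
Your proposal is correct and takes essentially the same route as the paper: the paper's proof of this corollary is simply ``imitate the proof of Theorem~\ref{cfd2497}'', which is exactly what you have done, with the right bookkeeping ($[\LL(1,n)(\la)]\mapsto n^{\la}$ under $K_0^{\gr}(\LL(1,n))\cong \mathbb Z[1/n]$, so that graded isomorphism forces equality of the sums). Your converse --- passing to $A_0$ via Dade's theorem, invoking unit-regularity of the ultramatricial algebra $A_0$ to upgrade $[P]=[Q]$ to $P\cong Q$, and tensoring back up by $-\otimes_{A_0}A$ using $A_{\la}\otimes_{A_0}A\cong_{\gr}A(\la)$ --- is precisely the lifting device used in Theorem~\ref{cfd2497}.
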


\subsection{Graded $K_0$ of finite graphs with no sinks}

In the last two theorems we restrict ourselves to Leavitt path algebras  associated to finite graphs with no sinks. 
By Theorem~\ref{hazst}, these are precisely strongly graded unital Leavitt path algebras. One would think  the conjectures stated in Introduction ~\S\ref{introf} should be first checked for this kind of graphs. Theorem~\ref{ictp1} (and Corollary~\ref{hgfwks6}) shows that the structure of $\LL(E)$ is closely reflected  in the structure of the group $K_0^{\gr}(\LL(E))$.

\begin{theorem}
Let $E$ and $F$ be finite graphs with no sinks. Then the following
are equivalent:

\begin{enumerate}[\upshape(1)]

\item $K_0^{\gr}(\LL(E)) \cong K_0^{\gr}(\LL(F))$ as partially ordered groups.

\smallskip

\item $\LL(E)$ is  gr-Morita equivalent to $\LL(F).$

\smallskip

\item $\LL(E)_0$ is Morita equivalent to $\LL(F)_0.$

\end{enumerate}
\end{theorem}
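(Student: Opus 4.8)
The plan is to prove the cyclic chain $(1)\Rightarrow(3)\Rightarrow(2)\Rightarrow(1)$, using Dade's theorem (\S\ref{dadestmal}) to pass between the graded category and the degree-zero category, and Elliott's classification of ultramatricial algebras as the main input. Since $E$ and $F$ are finite with no sinks, Theorem~\ref{hazst} guarantees that $\LL(E)$ and $\LL(F)$ are strongly graded, so Dade's theorem applies: the functors $(-)_0\colon \mbox{gr-}\LL(E)\to\mbox{mod-}\LL(E)_0$ and $(-)_0\colon\mbox{gr-}\LL(F)\to\mbox{mod-}\LL(F)_0$ are equivalences of categories. As any equivalence sends graded finitely generated projectives to finitely generated projectives, it identifies the positive cones, and hence $K_0^{\gr}(\LL(E))\cong K_0(\LL(E)_0)$ as \emph{ordered} groups (and similarly for $F$). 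I will use this order-preserving identification repeatedly.

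The two easy implications come directly from Dade. For $(3)\Rightarrow(2)$, a ring Morita equivalence is precisely an equivalence $\mbox{mod-}\LL(E)_0\simeq\mbox{mod-}\LL(F)_0$; composing it with the two Dade equivalences produces an equivalence $\mbox{gr-}\LL(E)\simeq\mbox{gr-}\LL(F)$, i.e.\ a graded Morita equivalence. For $(2)\Rightarrow(1)$, a graded equivalence restricts to an equivalence of the subcategories of graded finitely generated projectives, inducing a monoid isomorphism $\mathcal V^{\gr}(\LL(E))\cong\mathcal V^{\gr}(\LL(F))$ and therefore an isomorphism $K_0^{\gr}(\LL(E))\cong K_0^{\gr}(\LL(F))$ of partially ordered groups.

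The substance of the theorem lies in $(1)\Rightarrow(3)$, which I expect to be the main obstacle. Combining $(1)$ with the order isomorphisms $K_0^{\gr}(\LL(E))\cong K_0(\LL(E)_0)$ and $K_0^{\gr}(\LL(F))\cong K_0(\LL(F)_0)$ furnishes an order isomorphism $\psi\colon K_0(\LL(E)_0)\to K_0(\LL(F)_0)$ between the $K_0$-groups of two \emph{ultramatricial} algebras. To convert this into a Morita equivalence I would reduce to Elliott's theorem (\cite[\S15]{goodearlbook}), which classifies ultramatricial algebras by their \emph{pointed} ordered $K_0$-groups. Put $u=\psi([\LL(E)_0])$; as $[\LL(E)_0]$ is an order unit and $\psi$ is an order isomorphism, $u$ is an order unit of $K_0(\LL(F)_0)$. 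Since $\LL(F)_0$ is ultramatricial, every positive class is realized by an honest finitely generated projective module, so $u=[P]$ for some finitely generated projective $\LL(F)_0$-module $P$, and being an order unit forces $P$ to be a progenerator. Then $T:=\End_{\LL(F)_0}(P)$ is Morita equivalent to $\LL(F)_0$, is again ultramatricial, and satisfies $(K_0(T),[T])\cong(K_0(\LL(F)_0),u)$. Hence $\psi$ upgrades to a pointed order isomorphism $(K_0(\LL(E)_0),[\LL(E)_0])\cong(K_0(T),[T])$, and Elliott's theorem yields a ring isomorphism $\LL(E)_0\cong T$; composing with the Morita equivalence $T\sim\LL(F)_0$ gives $(3)$.

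The delicate points, all of which are standard dimension-group facts but must be assembled with care, are the dictionary between the order structure on $K_0$ and actual modules over an ultramatricial algebra: that every positive class is realizable, that order units correspond exactly to progenerators, and that the endomorphism ring of a finitely generated projective module over an ultramatricial algebra is again ultramatricial (so that Elliott's theorem is applicable to $T$). The crux is that $(1)$ only supplies a bare ordered-group isomorphism with no preferred order unit, whereas Elliott's theorem requires a pointed one; matching the order units correctly, via the progenerator $P$, is exactly the bridge that turns the order isomorphism into a genuine Morita equivalence.
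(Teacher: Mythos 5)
Your proposal is correct and takes essentially the same route as the paper: Theorem~\ref{hazst} makes $\LL(E)$ and $\LL(F)$ strongly graded, Dade's theorem (\S\ref{dadestmal}) transports everything to the ultramatricial degree-zero algebras $\LL(E)_0$ and $\LL(F)_0$, and the classification of ultramatricial algebras by ordered $K_0$ finishes the argument. The only difference is one of citation granularity: the paper invokes Corollary~15.27 of \cite{goodearlbook} (two ultramatricial algebras are Morita equivalent if and only if their $K_0$-groups are isomorphic as partially ordered groups) as a black box, whereas you re-derive exactly that corollary from Elliott's pointed theorem via the order-unit/progenerator/endomorphism-ring construction.
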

\begin{proof}
The equivalence of the statements follow from Theorems~\ref{hazst}
and~\ref{ultrad} and  Dade's Theorem (\S\ref{dadestmal}) in combination with
Corollary~15.27 in~\cite{goodearlbook} which states that two
ultramatricial algebras are Morita equivalent if their $K_0$ are
isomorphism as partially ordered groups.
\end{proof}

Let $E$ be a finite graph with no sinks. Set $A=\LL_K(E)$. For any $u \in E^0$ and $i \in \mathbb Z$, $uA(i)$ is a right graded finitely generated projective $A$-module and any graded finitely generated projective $A$-module is generated by these modules up to isomorphism, i.e., 
$$\mathcal V^{\gr}(A)=\Big \langle \big [uA(i)\big ]  \mid u \in E^0, i \in \mathbb Z \Big \rangle, 
$$ and $K_0^{\gr}(A)$ is the group completion of $\mathcal V^{\gr}(A)$. The action of $\mathbb Z[x,x^{-1}]$ on $\mathcal V^{\gr}(A)$ and thus on $K_0^{\gr}(A)$ is defined on generators by $x^j [uA(i)]=[uA(i+j)]$, where $i,j \in \mathbb Z$. We first observe that for $i\geq 0$, 

\begin{equation}\label{hterw}
x[uA(i)]=[uA(i+1)]=\sum_{\{\alpha \in E^1 \mid s(\alpha)=u\}}[r(\alpha)A(i)].
\end{equation}

First notice that for $i\geq 0$, $A_{i+1}=\sum_{\alpha \in E^1} \alpha A_i$. It follows $uA_{i+1}=\bigoplus_{\{\alpha \in E^1 \mid s(\alpha)=u\}} \alpha A_i$ as $A_0$-modules. Using (\ref{veronaair}), and the fact that $\alpha A_i \cong r(\alpha) A_i$ as $A_0$-module,
we get $uA(i+1) \cong \bigoplus_{\{\alpha \in E^1 \mid s(\alpha)=u\}} r(\alpha) A(i)$ as graded $A$-modules. This gives~(\ref{hterw}).

A subgroup $I$ of  $K_0^{\gr}(A)$ is called a {\it graded ordered ideal} if $I$ is closed under the action of $\mathbb Z[x,x^{-1}]$, $I=I^{+}-I^{+}$, where $I^{+}=I\cap  K_0^{\gr}(A)^{+}$ and $I^{+}$ is hereditary, i.e., if $a,b \in K_0^{\gr}(A)$, $0 \leq a \leq b \in I$ then $a\in I$.

\begin{theorem}\label{ictp1}
Let $E$ be a finite graph with no sinks. Then there is  one-to-one correspondences between the subsets of hereditary and saturated subsets of $E^0$ and graded ordered ideals of $K_0^{\gr}(\LL_K(E))$.
\end{theorem}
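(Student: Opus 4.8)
The plan is to produce two mutually inverse, inclusion-preserving maps between the hereditary saturated subsets $H\subseteq E^0$ and the graded ordered ideals of $K_0^{\gr}(\LL(E))$, verifying the defining axioms on each side by means of the relation~(\ref{hterw}). Write $A=\LL_K(E)$. Since $E$ has no sinks, $A$ is strongly graded (Theorem~\ref{hazst}), so by Dade's theorem (\S\ref{dadestmal}) we have $K_0^{\gr}(A)\cong K_0(A_0)$, and $K_0^{\gr}(A)$ is generated as a $\mathbb Z[x,x^{-1}]$-module by the classes $[vA]$, $v\in E^0$, with $x^i[vA]=[vA(i)]$. The key structural input I would record first is that the positive $\mathbb Z[x,x^{-1}]$-monoid $\mathcal V^{\gr}(A)$ is presented by the generators $[vA]$ subject only to the relations $x[vA]=\sum_{\{\alpha\mid s(\alpha)=v\}}[r(\alpha)A]$ of~(\ref{hterw}); equivalently, $\mathcal V^{\gr}(A)$ is isomorphic to the ordinary graph monoid of the covering graph $E\times_1\mathbb Z$ of Remark~\ref{hjsonf}, which is a row-finite refinement monoid.

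For hereditary saturated $H$ I would set $I_H$ to be the $\mathbb Z[x,x^{-1}]$-submodule of $K_0^{\gr}(A)$ generated by $\{[vA]\mid v\in H\}$, and for a graded ordered ideal $I$ I would set $H_I=\{v\in E^0\mid [vA]\in I\}$. To see $I_H$ is a graded ordered ideal: it is $x^{\pm1}$-invariant by construction, and the formula $x[vA]=\sum_{\{\alpha\mid s(\alpha)=v\}}[r(\alpha)A]$ together with heredity of $H$ shows the generating set is closed under the reductions defining $\mathcal V^{\gr}(A)$; the refinement property then yields that $I_H^{+}:=I_H\cap K_0^{\gr}(A)^{+}$ is exactly the set of $\mathbb N[x,x^{-1}]$-combinations of the $[vA]$ with $v\in H$, that $I_H=I_H^{+}-I_H^{+}$, and that $I_H^{+}$ is hereditary. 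Conversely, $H_I$ is hereditary because for an edge $\alpha$ with $s(\alpha)=v\in H_I$ one has $0\le[r(\alpha)A]\le x[vA]\in I^{+}$, so heredity of $I^{+}$ forces $[r(\alpha)A]\in I$, i.e. $r(\alpha)\in H_I$, and one iterates along paths; and $H_I$ is saturated because (as $E$ has no sinks) if $r(s^{-1}(v))\subseteq H_I$ then $x[vA]=\sum_{\{\alpha\mid s(\alpha)=v\}}[r(\alpha)A]\in I$, whence $[vA]=x^{-1}\bigl(x[vA]\bigr)\in I$ by $x$-invariance, so $v\in H_I$.

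Next I would check that $\Phi\colon H\mapsto I_H$ and $\Psi\colon I\mapsto H_I$ are mutually inverse. The inclusions $H\subseteq H_{I_H}$ and $I_{H_I}\subseteq I$ are immediate from the definitions. For $I\subseteq I_{H_I}$, any $a\in I^{+}$ is a class $[P]$ of a graded finitely generated projective module, and each summand $[vA(i)]$ of $P$ satisfies $0\le[vA(i)]\le a\in I^{+}$, so heredity of $I^{+}$ gives $v\in H_I$ and hence $a\in I_{H_I}$. The one remaining identity $H_{I_H}=H$ reduces to the implication $[vA]\in I_H\Rightarrow v\in H$, that is, to the statement that the generator $[vA]$ lies in the submodule generated by the $H$-generators only when $v\in H$; this is exactly where the normal-form (confluence) analysis of the graph monoid of $E\times_1\mathbb Z$ is used. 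Finally both maps are visibly inclusion-preserving, so the correspondence is an isomorphism of lattices.

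The main obstacle is the structural fact underlying the first paragraph: that $\mathcal V^{\gr}(A)$ is a refinement monoid admitting a confluent rewriting system, the graded analogue of the Ara--Moreno--Pardo description of graph monoids. It is precisely this that powers both the heredity of $I_H^{+}$ and the injectivity statement $[vA]\in I_H\Rightarrow v\in H$. Once the covering-graph identification $\mathcal V^{\gr}(A)\cong M_{E\times_1\mathbb Z}$ is established, these become the known monoid-theoretic properties of row-finite graph monoids applied to a $\mathbb Z$-equivariant situation, the $\mathbb Z$-action being the shift induced by $x$; its invariant hereditary saturated subsets of $(E\times_1\mathbb Z)^0=E^0\times\mathbb Z$ are exactly the sets $H\times\mathbb Z$ with $H\subseteq E^0$ hereditary saturated, which is what matches the graded ordered ideals back to subsets of $E^0$.
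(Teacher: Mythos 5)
Your outer skeleton coincides with the paper's: the same two maps $H \mapsto I_H$ and $I \mapsto H_I$, the same use of~(\ref{hterw}) together with the ordered-ideal axioms to show $H_I$ is hereditary (via $0 \le [r(\alpha)A(i)] \le x[vA(i)] \in I^{+}$) and saturated (via $x^{-1}$-invariance), and the same easy inclusions. The genuine gap is at the crux, and you have flagged it yourself: both your claim that the $\mathbb Z[x,x^{-1}]$-submodule generated by $\{[vA] \mid v \in H\}$ is an ordered ideal with hereditary positive cone, and the key implication $[vA(i)] \in I_H \Rightarrow v \in H$, are made to rest on the asserted presentation of $\mathcal V^{\gr}(\LL(E))$ by the generators $[vA]$ subject only to the relations~(\ref{hterw}), equivalently the identification $\mathcal V^{\gr}(\LL(E)) \cong M_{E\times_1 \mathbb Z}$. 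This graded analogue of \cite[Theorem~3.5]{amp} is a nontrivial theorem: it is not proved in your argument, it is not available in the literature this paper cites (it was established only in later work), and it is in substance at least as strong as the statement being proven. Invoking ``the refinement property'' and ``the normal-form (confluence) analysis'' of a monoid whose presentation you have not established is assuming the hard part away.

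The paper circumvents exactly this obstacle, and the device is worth noting. First, $\phi(H)$ is \emph{defined} as the graded ordered ideal generated by the classes $[vA(i)]$, $v \in H$, so nothing needs to be verified on that side. Second, for the hard implication it (a) shows that $\mathcal V^{\gr}(A) \to K_0^{\gr}(A)^{+}$ is an isomorphism, i.e.\ that $\mathcal V^{\gr}(A)$ is cancellative, using Dade's theorem together with unit-regularity of the ultramatricial algebra $A_0$ and \cite[Proposition~15.2]{goodearlbook}; (b) converts $[vA(i)] \in \phi(H)$ into an honest monoid equation $[\gamma] = [v] + [\delta]$ with $\supp(\gamma) \subseteq H$; and (c) pushes this equation along the forgetful epimorphism $K_0^{\gr}(A)^{+} \cong \mathcal V^{\gr}(A) \rightarrow \V(A) \cong M_E$ into the \emph{non-graded} graph monoid, where the confluence lemmas \cite[Lemmas~4.2 and~4.3]{amp} are already available, concluding $v \in H$ from heredity and saturation of $H$ there. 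Since the target conclusion ``$v \in H$'' makes no reference to the grading, the non-graded monoid suffices and no presentation of $\mathcal V^{\gr}$ is ever needed. To repair your proof you would either adopt this descent-to-$M_E$ argument, or genuinely prove the covering-graph identification---which is a separate, and harder, theorem.
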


\begin{proof}

Let $\mathcal H$ be the set of all hereditary and saturated subsets of $E^0$ and $\mathcal L(K_0^{\gr}(A))$ the set of all graded ordered ideals of $K_0^{\gr}(A)$, where $A=\LL_K(E)$. Define $\phi: \mathcal H \longrightarrow \mathcal L(K_0^{\gr}(A))$ as follows, for $H \in \mathcal H$, $\phi(H)$ is the graded ordered ideal generated by $\big \{[vA(i)]\mid v\in H, i\in \mathbb Z\big\}$. 
Define also $\psi: \mathcal L(K_0^{\gr}(A)) \longrightarrow  \mathcal H$ as follows, for a graded ordered ideal $I$ of $K_0^{\gr}(A)$, 
set $\psi(I)=\big \{ v\in E^0 \mid [vA(i)] \in I, \text{ for some } i \in \mathbb Z \big \}$.  We show that these maps are order-preserving mutually inverse maps. 

Let $I$ be an graded ordered ideal of $K_0^{\gr}(A)$. We show $H=\psi(I)$ is hereditary and saturated. Let $v \in H$ and $w\in E^0$ be an adjacent vertex to $v$, i.e., there is a $e\in E^1$ such that $s(e)=v$ and $r(e)=w$. Since $v\in H$, $[vA(i)]\in I$ for some and therefore all  $i \in \mathbb Z$. Since $I$ is closed under the action of $\mathbb Z[x,x^{-1}]$, by (\ref{hterw}), for $i\geq 0$,  
\begin{equation}\label{hgfd}
x[vA(i)]=\sum_{\{\alpha \in E^1 \mid s(\alpha)=v\}}[r(\alpha)A(i)]=[wA(i)]+\sum_{\{\alpha \in E^1\backslash \{e\} \mid s(\alpha)=v\}}[r(\alpha)A(i)]
\in I. \end{equation}
So (\ref{hgfd}) shows $[wA(i)] \leq x[vA(i)] \in I$. Since $I$ is an ordered-ideal, it follows $[wA(i)]\in I$ so $w \in H$. Now an easy induction shows that if $v\in H$ is connected to $w$ by a path (of length $\geq 1$), then $w \in H$. We next show that $H$ is saturated. Let $v\in E^0$ such that $r(e) \in H$ for every $e\in E^1$ emited from $v$. Then by (\ref{hterw}), $x[vA(i)]=\sum_{\{\alpha \in E^1 \mid s(\alpha)=v\}}[r(\alpha)A(i)]$ for any $i\in \mathbb Z^+$. But
$[r(\alpha)A(i)] \in I$ for some and therefore all $i \in \mathbb Z$. So $x[vA(i)] \in I$ and thus $[vA(i)] \in I$. Therefore $v \in H$.

Suppose $H$ is a saturated hereditary subset of $E^0$ and $I=\phi(H)$ is a graded ordered ideal generated by $\big \{[vA(i)]\mid v\in H, i\in \mathbb Z\big\}$. We show that $v\in H$ if and only if $[vA(i)]\in I$ for some $i \in \mathbb Z$. It is obvious that if $v\in H$ then $[vA(i)]\in I$, for any $i \in \mathbb Z$. For the converse, let $[vA(i)] \in I$ for some $i\in \mathbb Z$. 
Then there are $[\gamma]=\big [\sum_{t=1}^l h_tA(i_t) \big ]$, where $h_t \in H$ and $[\delta]=\big [\sum_{s=1}^{l'} u_sA(i_s)\big ]$, where $u_s\in E^0$ such that 
 \begin{equation}\label{khapp}
 [\gamma]=[v]+[\delta].
 \end{equation} There is a natural isomorphism $\mathcal V^{\gr}(A) \rightarrow K^{\gr}_0(A)^+$, $[uA(i)] \mapsto [uA(i)]$. 
 Indeed, if $[P]=[Q]$ in $K^{\gr}_0(A)^+$, then $[P_0]=[Q_0]$ in $K_0(A_0)$. But $A_0$ is an ultramatricial algebra so it is unit-regular and by Proposition~15.2 in~\cite{goodearlbook}, $P_0\cong Q_0$ as $A_0$-modules. Thus $P\cong_{\gr} P_0 \otimes_{A_0} A \cong_{\gr}  Q_0 \otimes_{A_0} A \cong_{\gr} Q$. (This argument shows $\mathcal V^{\gr}(A)$ is cancellative.) So the above map is well-defined. 
 
 Consider the epimorphisms 
\begin{align}
K^{\gr}_0(A)^+ \stackrel{\cong}{\longrightarrow}  \mathcal V^{\gr}(A) & \longrightarrow \mathcal V(A)  \stackrel{\cong}{\longrightarrow} M_E,\\
[vA(i)]  \longmapsto  [vA(i)] & \longmapsto [vA] \longmapsto [v] \notag
\end{align}

Equation~\ref{khapp} under these epimorphisms becomes $[\gamma]=[v]+[\delta]$ in $M_E$, where $[\gamma]= \sum_{t=1}^l [ h_t ]$ and 
$[\delta]= \sum_{s=1}^{l'} [ u_s ]$. The rest of the argument imitates the last part of the proof of~\cite[Proposition~5.2]{amp}. We include it here for completeness. 
 By~\cite[Lemma~4.3]{amp}, there is 
$\beta \in F(E)$ such that $\gamma \rightarrow \beta$ and $v+\delta \rightarrow \beta$. Since $H$ is hereditary and $\supp(\gamma)\subseteq H$, we get $\supp(\beta)\subseteq H$. By ~\cite[Lemma~4.2]{amp}, we have $\beta=\beta_1+\beta_2$, where $v\rightarrow \beta_1$ and $\delta\rightarrow \beta_2$. Observe that $\supp(\beta_1) \subseteq  \supp(\beta) \subseteq  H$. Using that $H$ is saturated, it is 
easy to check that, if $\alpha \rightarrow \alpha'$ and $\supp(\alpha') \subseteq H$, then $\supp(\alpha) \subseteq  H$. Using this and induction, we obtain 
that $v \in H$. 
\end{proof}


Recall the construction of the monoid $M_E$ assigned to a graph $E$ (see the proof of Theorem~\ref{wke} and~(\ref{phgqcu})). 

\begin{corollary}\label{hgfwks6}
Let $E$ be a finite graph with no sinks. Then there is a one-to-one order persevering correspondences between the following sets

\begin{enumerate}[\upshape(1)]
\item hereditary and saturated subsets of $E^0$,
\smallskip

\item graded ordered ideals of $K_0^{\gr}(\LL_K(E))$,

\smallskip

\item graded two-sided ideals of $\LL_K(E)$, 

\smallskip 
\item ordered ideals of $M_E$. 

\end{enumerate}
\end{corollary}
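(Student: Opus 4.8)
The plan is to treat the lattice $\mathcal H$ of hereditary and saturated subsets of $E^0$ as the common hub through which all four objects are compared: I would show that each of the lists (2), (3), (4) is in an order-preserving bijection with (1), and then obtain the asserted four-fold correspondence by composing these bijections. The correspondence (1) $\leftrightarrow$ (2) is exactly Theorem~\ref{ictp1}, which I would invoke directly, together with the fact---established in its proof---that the maps $\phi$ and $\psi$ constructed there are order preserving.

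For (1) $\leftrightarrow$ (3) I would recall the standard lattice isomorphism for Leavitt path algebras of row-finite graphs: for $H\in\mathcal H$ the two-sided ideal $I(H)$ of $\LL_K(E)$ generated by $\{v\mid v\in H\}$ is graded, the assignment $H\mapsto I(H)$ is an order-preserving bijection onto the set of graded two-sided ideals, and its inverse sends a graded ideal $I$ to $I\cap E^0$ (see~\cite{amp}). Since $E$ has no sinks, $E^0=E^0\setminus\sink$ and the correspondence is the clean row-finite one, with no extra breaking-vertex data to account for.

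For (1) $\leftrightarrow$ (4) I would reuse the support argument that already appears at the end of the proof of Theorem~\ref{ictp1}. Recall from the proof of Theorem~\ref{wke} that $M_E\cong\V(\LL_K(E))$. Given an ordered ideal $S$ of $M_E$, the set $\{v\in E^0\mid [v]\in S\}$ is hereditary and saturated by~\cite[Lemmas~4.2, 4.3 and Proposition~5.2]{amp}; conversely a set $H\in\mathcal H$ determines the ordered ideal of $M_E$ generated by $\{[v]\mid v\in H\}$, and these assignments are mutually inverse and order preserving. The only difference from Theorem~\ref{ictp1} is that here one works with the plain monoid $M_E$ rather than the graded monoid $\V^{\gr}(\LL_K(E))$, so one simply forgets the $\mathbb Z[x,x^{-1}]$-action.

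Assembling the three edges to $\mathcal H$ yields the stated correspondences, each order preserving, hence an order isomorphism of the associated lattices. The step I expect to require the most care is matching the three different notions of \emph{ideal}---graded ordered ideals of $K_0^{\gr}$ (closed under the suspension action and order-hereditary), graded ring ideals, and order ideals of the monoid $M_E$---since (2) carries the extra $\mathbb Z[x,x^{-1}]$-structure while (3) and (4) do not. For this reason the cleanest route is to keep $\mathcal H$ as the hub and never compare (2) directly with (3) or (4); each bijection with $\mathcal H$ is then verified on its own terms.
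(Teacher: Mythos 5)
Your proposal is correct and follows essentially the same route as the paper: the paper's proof simply invokes Theorem~\ref{ictp1} for the correspondence between (1) and (2), and cites Theorem~5.3 of~\cite{amp} for the correspondences of (1) with (3) and (4), which is exactly your hub-through-$\mathcal H$ strategy (your direct use of Lemmas~4.2, 4.3 and Proposition~5.2 of~\cite{amp} for the edge (1)$\leftrightarrow$(4) just unpacks what that cited theorem provides).
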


\begin{proof}
This follows from Theorem~\ref{ictp1} and Theorem~5.3 in~\cite{amp}. 
\end{proof} 


\begin{example}\label{smallgraphs}
In~\cite{aalp}, using a `change the graph' theorem~\cite[Theorem~2.3]{aalp}, it was shown that the following graphs give rise to isomorphic (purely infinite simple)  Leavitt path algebras. 

\begin{equation*}
\xymatrix{
E_1 : & \bullet  \ar@(lu,ld)\ar@/^0.9pc/[r] & \bullet \ar@/^0.9pc/[l] 
&&
E_2: & \bullet  \ar@(lu,ld)\ar@/^0.9pc/[r] & \bullet \ar@(ru,rd)\ar@/^0.9pc/[l] 
&&
E_3:& 
\bullet \ar@(u,l)\ar@(d,l) &\bullet \ar[l] 
}
\end{equation*}

\medskip 

As it is noted~\cite{aalp}, $K_0$  of Leavitt path algebras of these graphs are all zero (see Theorem~\ref{wke}). However this change the graph theorem does not respect the grading.  We calculate $K^{\gr}_0$ of these graphs and show that the Leavitt path algebras of these graphs are not graded isomorphic.  Among these graphs $E_3$ is the only one which is in the category of polycephaly graphs. 

{\bf Graph} $\mathbf {E_1}$: The adjacency matrix of this graph is $N=\left(\begin{matrix} 1 & 1\\ 1 & 0
\end{matrix}\right)$.  By Theorem~\ref{ultrad} \begin{equation*}
K_0^{\gr}(\LL(E_1)) \cong \varinjlim \mathbb Z\oplus \mathbb Z
\end{equation*}
of the inductive system $ \mathbb Z\oplus \mathbb Z
\stackrel{N^t}{\longrightarrow}  \mathbb Z\oplus \mathbb Z
\stackrel{N^t}\longrightarrow  \mathbb Z\oplus \mathbb Z
\stackrel{N^t}\longrightarrow \cdots$. Since $\det(N)\not = 0$, one can easily see that \[\varinjlim \mathbb Z\oplus \mathbb Z \cong \mathbb Z\Big [\frac{1}{\det(N)}\Big]\oplus  \mathbb Z\Big [\frac{1}{\det(N)}\Big].\] Thus \[ K_0^{\gr}(\LL(E_1)) \cong \mathbb Z\oplus \mathbb Z. \] In fact this algebra produces a so called the Fibonacci algebra (see \cite[Example IV.3.6]{davidson}).

\medskip 

{\bf Graph} $\mathbf {E_2}$:  The adjacency matrix of this graph is $N=\left(\begin{matrix} 1 & 1\\ 1 & 1
\end{matrix}\right)$. By Theorem~\ref{ultrad} \begin{equation*}
K_0^{\gr}(\LL(E_2)) \cong \varinjlim \mathbb Z\oplus \mathbb Z \cong \mathbb Z[\frac{1}{2}]
\end{equation*}

\medskip 

{\bf Graph} $\mathbf {E_3}$:  By Theorem~\ref{re282},  $K_0^{\gr}(\LL(E_3)) \cong  \mathbb Z[\frac{1}{2}]$.
This shows that $\LL(E_1)$ is not graded isomorphic to $\LL(E_2)$ and $\LL(E_3)$. Finally we show that  $\LL(E_2)\not \cong_{\gr} \LL(E_3).$ 
First note that the graph $E_2$ is the out-split of the graph 

\medskip
\[\xymatrix{
E_4:& \bullet \ar@(u,l)\ar@(d,l)}\] \medskip 

\noindent (see~\cite[Definition~2.6]{aalp}). 
Therefore by~\cite[Theorem~2.8]{aalp}  $\LL(E_2) \cong_{\gr}\LL(E_4)$. But by Theorem~\ref{re282}
\[\big ( K^{\gr}_0(\LL(E_4)),[\LL(E_4)]\big )=\big (\mathbb Z[\frac{1}{2}],1\big ),\] whereas \[\big ( K^{\gr}_0(\LL(E_3)),[\LL(E_3)]\big )=\big(\mathbb Z[\frac{1}{2}],\frac{3}{2}\big).\] Now Theorem~\ref{mani543} shows that $\LL(E_4) \not \cong_{\gr} \LL(E_3)$. Therefore  $\LL(E_2)\not \cong_{\gr} \LL(E_3).$  

\end{example}

\begin{example}\label{smallgraphs2}
In this example we look at three graphs which the graded $K_0$-group of their Leavitt path algebras and the position of their identities are the same but  the Leavitt path algebras are mutually not isomorphic.  We will then show that there is no $\mathbb Z[x,x^{-1}]$-isomorphisms between their graded $K_0$-groups. This shows that in our conjectures, not only the isomorphisms need to be order preserving, but it should also respect the  $\mathbb Z[x,x^{-1}]$-module structures.

Consider the following three graphs. 
\begin{equation*}
\xymatrix{
E_1 : & \bullet  \ar@(lu,ld)\ar@/^0.9pc/[r] & \bullet \ar@/^0.9pc/[l] 
&&
E_2: & \bullet  \ar@(lu,ld)\ar[r] & \bullet \ar@(ru,rd)
&&
E_3: & \bullet  \ar@/^0.9pc/[r] & \bullet \ar@/^0.9pc/[l]  
}
\end{equation*}

The adjacency matrices of these graphs are $N_{E_1}=\left(\begin{matrix} 1 & 1\\ 1 & 0\end{matrix}\right)$,
$N_{E_2}=\left(\begin{matrix} 1 & 1\\ 0 & 1\end{matrix}\right)$ and $N_{E_3}=\left(\begin{matrix} 0 & 1\\ 1 & 0\end{matrix}\right)$.  We determine the graded $K_0$-groups  and the position of identity in each of these groups. First observe that the ring $\LL(E_i)_0$, $1\leq i \leq 3$,  is the direct limit of the following direct systems, respectively (see~(\ref{volleyb}) in the proof of Theorem~\ref{ultrad}).

\begin{align} 
\mathbb 
\LL(E_1)_0: &  \qquad K \oplus K
\stackrel{N_{E_1}^t}{\longrightarrow}  \M_2(K) \oplus K
\stackrel{N_{E_1}^t}\longrightarrow  \M_3(K)\oplus \M_2(K)
\stackrel{N_{E_1}^t}\longrightarrow \cdots \notag\\
\LL(E_2)_0:  &  \qquad K\oplus K
\stackrel{N_{E_2}^t}{\longrightarrow}  K\oplus \M_2(K)
\stackrel{N_{E_2}^t}\longrightarrow  K\oplus \M_3(K)
\stackrel{N_{E_2}^t}\longrightarrow \cdots  \label{hgdtrew} \\
\LL(E_3)_0: &  \qquad K\oplus K
\stackrel{N_{E_3}^t}{\longrightarrow}  K\oplus K
\stackrel{N_{E_3}^t}\longrightarrow  K\oplus K
\stackrel{N_{E_3}^t}\longrightarrow \cdots \notag
\end{align} 

Note that these are ultramatricial algebras with the following different Bratteli diagrams ({\it the Bratteli diagrams associated to Leavitt path algebras}). 
\begin{equation*}
\xymatrix@=15pt{
E_1: & \bullet \ar@{-}[r] \ar@{-}[dr] &  \bullet \ar@{-}[r] \ar@{-}[dr] & \bullet  \ar@{-}[r] \ar@{-}[dr]  &&& E_2: & \bullet  \ar@{-}[r] \ar@{-}[dr] &  \bullet \ar@{-}[r] \ar@{-}[dr] &  \bullet \ar@{-}[r] \ar@{-}[dr]  &&&  E_3: &  \bullet  \ar@{-}[dr] &  \bullet \ar@{-}[dr] & \bullet  \ar@{-}[dr]  &\\
&\bullet   \ar@{-}[ur] & \bullet \ar@{-}[ur]  & \bullet  \ar@{-}[ur]&&&  &\bullet  \ar@{-}[r] & \bullet \ar@{-}[r]  & \bullet \ar@{-}[r]&&&  & \bullet \ar@{-}[ur] & \bullet \ar@{-}[ur]  & \bullet \ar@{-}[ur]&
 }
\end{equation*}

Since the determinant of adjacency matrices are $\pm 1$, by Theorem~\ref{ultrad} (and~(\ref{veronaair})), for $1\leq i \leq 3$, 
\begin{multline}
\big(K_0^{\gr}(\LL(E_i)),[\LL(E_i)]\big)\cong \big(K_0(\LL(E_i)_0),[\LL(E_i)_0]\big)\cong \big(K_0(\varinjlim_n L^i_{0,n}),[\varinjlim_n L^i_{0,n}]\big)\cong \\ \big (\varinjlim_n K_0( L^i_{0,n}), \varinjlim_n [L^i_{0,n}]\big )\cong \big (\mathbb Z \oplus \mathbb Z, (1,1) \big ),
\end{multline}
where $L^i_{0,n}$ are as appear in the direct systems~(\ref{hgdtrew}). 

Next we show that there  is no $\mathbb Z[x,x^{-1}]$ isomorphisms between the graded $K_0$-groups. 
In  $K_0^{\gr}(\LL(E_1)) \cong \mathbb Z \oplus \mathbb Z$, one can observe that 
$[u\LL(E_1)]=\varinjlim_n [u L^1_{0,n}]= (1,0)$ and similarly $[v\LL(E_1)]=(0,1)$. Furthermore, 
by~(\ref{hterw}), the action of $\mathbb Z[x,x^{-1}]$ on these elements are, 
\begin{align}
x[u\LL(E_1)] & =[u\LL(E_1)(1)]=\sum_{\{\alpha \in E_1^1 \mid s(\alpha)=u\}}[r(\alpha)\LL(E_1)]=[u\LL(E_1)]+[v\LL(E_1)], \notag \\
x[v\LL(E_1)] &= [v\LL(E_1)(1)]=\sum_{\{\alpha \in E_1^1 \mid s(\alpha)=v\}}[r(\alpha)\LL(E_1)]=[u\LL(E_1)]. \notag
\end{align}
In particular this shows that $x(1,0)=(1,1)$ and $x(0,1)=(1,0)$. The above calculation shows that 
\begin{equation}\label{hpolor}
x (m,n)= N_{E_1}^t (m,n),
\end{equation}  where $(m,n) \in \mathbb Z\oplus \mathbb Z$. One can carry out the similar calculations observing that the action of 
$x$ on $K_0^{\gr}(\LL(E_2))$ and $K_0^{\gr}(\LL(E_3))$ are as in~(\ref{hpolor}) with $N^t_{E_2}$ and $N^t_{E_3}$ instead of $N^t_{E_1}$, respectively. 

Now if there is an order preserving isomorphism $\phi: \LL(E_i) \rightarrow \LL(E_j)$, $1\leq i,j \leq 3$, then $\phi$ has to be a permutation matrix (either $\left(\begin{matrix} 1 & 0 \\ 0 & 1\end{matrix}\right)$ or $\left(\begin{matrix} 0 & 1 \\ 1 & 0\end{matrix}\right)$). Furthermore, if $\phi$ preserves the $\mathbb Z[x,x^{-1}]$-module structure then $\phi(x a)=x\phi (a)$, for any $a \in \mathbb Z\oplus \mathbb Z$. Translating this into matrix representation, this means 
\begin{equation}\label{birdpeace}
\phi N_{E_i}^t = N_{E_j}^t \phi.
\end{equation} However, one can quickly check that, if $i\not = j$, then~(\ref{birdpeace}) can't be the case, i.e., there is no $\mathbb Z[x,x^{-1}]$-module isomorphisms between $\LL(E_i)$ and $\LL(E_j)$, $1\leq i \not = j \leq 3$.

By Corollary~\ref{hgfwks6}, $\LL_K(E_1)$ is a graded simple ring and $K_0^{\gr}(\LL(E_1))$ has no nontrivial graded ordered ideals. Notice that although $\mathbb Z\oplus 0$ and $0\oplus \mathbb Z$ are  hereditary ordered ideals of $K_0^{\gr}(\LL(E_1))$, but they are not closed under the action of 
$\mathbb Z[x,x^{-1}]$. On the other hand, one can observe that $0\oplus \mathbb Z$ is a hereditary ordered ideal of $K_0^{\gr}(\LL(E_2))$ which is also closed under the action of $\mathbb Z[x,x^{-1}]$. Indeed, one can see that the set containing the right vertex of $E_2$ is a  hereditary and saturated subset of $E_2^0$.

Using the similar arguments as above, we can show that, for two graphs $E$ and $F$, each with two vertices, such that $\det(N_E)=\det(N_F)=1$, we have $\LL_K(E) \cong_{\gr} \LL_K(F)$ if and only if $E=F$ (by re-labeling the vertices if necessary). 
\end{example}


\begin{thebibliography}{10}

\bibitem{aap05} G. Abrams, G. Aranda Pino, \emph{The Leavitt path algebra of a graph,} J. Algebra {\bf 293} (2005), no. 2, 319--334.

 \bibitem{aap06} G. Abrams, G. Aranda Pino, \emph{Purely infinite simple Leavitt path algebras}, J. Pure Appl. Algebra {\bf 207} (2006), no. 3, 553--563.





\bibitem{aalp} G. Abrams, P.N. \'Anh, A. Louly, E. Pardo, \emph{The classification question for Leavitt path algebras}, J. Algebra {\bf 320} (2008), no. 5, 1983--2026.




\bibitem{aapardo} G. Abrams, P.N. \'Anh, E. Pardo, \emph{Isomorphisms between Leavitt algebras and their matrix rings}, J. Reine Angew. Math. {\bf 624} (2008), 103--132.




\bibitem{arock} G. Abrams, \emph{Number theoretic conditions which yield isomorphisms and equivalences between matrix rings over Leavitt algebras}, Rocky Mountain J. Math {\bf 40} (4), (2010), 1069--1084.



\bibitem{arapardog} P. Ara, K.R. Goodearl, E. Pardo, \emph{$K_0$ of purely infinite simple regular rings}, 
K-Theory {\bf 26} (2002), no. 1, 69--100.

\bibitem{amp} P. Ara, M.A. Moreno, E. Pardo, \emph{Nonstable $K$-theory for graph algebras,} Algebr. Represent. Theory {\bf 10} (2007), no. 2, 157--178.






\bibitem{arawillie} P. Ara, M. Brustenga, G. Corti\~nas, \emph{$K$-theory of Leavitt path algebras},
M\"unster J. Math. {\bf 2} (2009), 5--33.




\bibitem{cohn11} P. M. Cohn, \emph{Some remarks on the invariant basis property}, Topology {\bf 5} (1966), 215--228.

\bibitem{cuntz1} J. Cuntz,  \emph{Simple $C^*$-algebras generated by isometries}, Comm. Math. Phys. {\bf 57} (1977), no. 2, 173--185.

\bibitem{cuntz2} J. Cuntz, \emph{$K$-theory for certain $C^*$-algebras}, Ann. of Math. (2) {\bf 113} (1981), no. 1, 
181--197.

\bibitem{dascalescu} S. D\u{a}sc\u{a}lescu, B. Ion, C.
N\u{a}st\u{a}sescu, J. Rios Montes, \emph{Group gradings on full
matrix rings}, J. Algebra {\bf 220} (1999), no. 2, 709--728.


\bibitem{davidson} K. Davidson, $C^*$-algebras by example. Fields Institute Monographs, 6. American Mathematical Society, Providence, RI, 1996.

 \bibitem{goodearl} K.R. Goodearl, \emph{Leavitt path algebras and direct limits}, Contemp. Math. {\bf 480} (2009), 165--187.

\bibitem{goodearlbook} K.R. Goodearl, Von Neumann regular rings, 2nd ed., Krieger Publishing Co., Malabar, FL, 1991.






\bibitem{haz} R. Hazrat, \emph{The graded structure of Leavitt path algebras}, preprint.  {\tt arXiv:1005.1900}





\bibitem{paskrae} A. Kumjian, D. Pask, I. Raeburn, \emph{Cuntz-Krieger algebras of directed graphs} Pacific J. Math. {\bf 184} (1998), no. 1, 161--174.

\bibitem{lamfc} T.Y. Lam, A first course in noncommutative rings, Springer-verlag 1991.

\bibitem{vitt56} W.G. Leavitt, \emph{Modules over rings of words}, Proc. Amer. Math. Soc. {\bf 7} (1956), 188--193.

\bibitem{vitt57} W.G. Leavitt, \emph{Modules without invariant basis number,} Proc. Amer. Math. Soc. {\bf 8} (1957), 322--328.

\bibitem{vitt62}W.G.  Leavitt, \emph{The module type of a ring,} Trans. Amer. Math. Soc. {\bf 103} (1962) 113--130.


\bibitem{grrings}  C. N\u ast\u asescu, F. Van Oystaeyen, Methods of graded
rings, Lecture Notes in Mathematics, 1836, Springer-Verlag, Berlin,
2004.



\bibitem{quillen} D. Quillen, \emph{Higher algebraic K-theory. I}. Algebraic K-theory, I: Higher K-theories (Proc. Conf., Battelle Memorial Inst., Seattle, Wash., 1972), pp. 85--147. Lecture Notes in Math., Vol. 341, Springer, Berlin 1973.





\bibitem{Raeburn113} I. Raeburn, W. Szyma\'nski, \emph{Cuntz-Krieger
algebras of infinite graphs and matrices}, Trans. Amer. Math. Soc.
{\bf 356} (2004), 39--59.

\bibitem{Raegraph} I. Raeburn, Graph algebras. CBMS Regional Conference Series in Mathematics, {\bf 103}
American Mathematical Society, Providence, RI, 2005.

\bibitem{rordam} M. R{\o}rdam,  Classification of nuclear $C^*$-algebras. Entropy in operator algebras, 1Ð145, Encyclopaedia Math. Sci., {\bf 126}, Springer, Berlin, 2002.

\end{thebibliography}
\end{document}